\definecolor{darkred}{HTML}{993333}
\newcommand{\arxiv}[1]{\href{http://arxiv.org/abs/#1}{\tt arXiv:\nolinkurl{#1}}}
\def\hw{{\operatorname{hw}}}
\def\op{\operatorname{op}}
\def\L{{\mathcal L}}
\def\Lp{{\mathcal L'}}
\def\Lm{{\mathcal L_{\min}}}
\def\dotLm{{\dot{\mathcal L}_{\min}}}
\def\dotLmp{{\dot{\mathcal L}'_{\min}}}
\def\Lmp{{\mathcal L'_{\min}}}
\def\Pol{\operatorname{Pol}}
\def\Sym{\operatorname{Sym}}
\newtheorem{theorem}{Theorem}[section]
\newtheorem{lemma}[theorem]{Lemma}
\newtheorem{corollary}[theorem]{Corollary} 
\theoremstyle{definition}  
\newtheorem{definition}[theorem]{Definition}
\newtheorem{example}[theorem]{Construction}
\newtheorem{remark}[theorem]{Remark}
\def\ob{\operatorname{ob}}
\def\Vec{\mathcal{V}ec}
\def\GVec{\underline{\mathcal{GV}ec}}
\def\rMod{\operatorname{Mod-\!}}
\def\lMod{\operatorname{\!-Mod}}
\def\rpMod{\operatorname{pMod-\!}}
\def\lpMod{\operatorname{\!-pMod}}
\def\rdMod{\operatorname{lfdMod-\!}}
\def\ldMod{\operatorname{\!-lfdMod}}
\def\rfMod{\operatorname{fgMod-\!}}
\def\lfMod{\operatorname{\!-fgMod}}
\def\Cat{\mathcal{C}at}
\def\GCat{\mathcal{GC}at}
\def\CCat{\mathfrak{C}\mathrm{at}}
\def\pC{\operatorname{p}\!\mathcal C}
\def\fC{\operatorname{fg}\!\mathcal C}
\def\dC{\operatorname{lfd}\!\mathcal C}
\def\CC{\mathfrak{C}}
\def\UU{\mathfrak{U}}
\def\H{\mathcal{H}}
\def\Hm{\mathcal{H}_{\min}}
\def\hhh{h}
\def\bi{\text{\boldmath$i$}}
\def\bj{\text{\boldmath$j$}}
\def\V{\mathcal V}
\def\A{\mathcal A}
\def\M{\mathcal M}
\def\L{\mathcal L}
\def\I{\mathcal I}
\def\RR{\mathcal R}
\def\SS{\mathcal S}
\def\R{\mathbb R}
\def\SSS{\mathbb S}
\def\ev{\operatorname{ev}}
\def\coev{\operatorname{coev}}
\newcommand{\End}{\operatorname{End}}
\newcommand{\Hom}{\operatorname{Hom}}
\newcommand{\g}{\mathfrak{g}}
\newcommand{\Z}{\mathbb{Z}}
\newcommand{\N}{\mathbb{N}}
\newcommand{\B}{\mathbf{B}}
\renewcommand{\k}{\Bbbk}
\newcommand{\eps}{\varepsilon}
\newcommand{\rad}{{\operatorname{rad}}}
\def\la{\lambda}
\def\e{{\mathrm{e}}}
\def\h{{\mathrm{h}}}
\def\C{\mathcal{C}}
\def\clubsuit{\diamondsuit}
\def\spadesuit{\diamondsuit}
\def\0{{\bar{0}}}
\def\1{{\bar{1}}}
\DeclareFontFamily{OT1}{pzc}{}
\DeclareFontShape{OT1}{pzc}{m}{it}{<-> s * [1.10] pzcmi7t}{}
\DeclareMathAlphabet{\mathpzc}{OT1}{pzc}{m}{it}
\DeclareMathOperator\head{hd}
\DeclareMathOperator\soc{soc}
\begin{document}

\title{Categorical actions and
  crystals}

\author[J. Brundan]{Jonathan Brundan}
\author[N. Davidson]{Nicholas Davidson}

\address{Department of Mathematics,
University of Oregon, Eugene, OR 97403, USA}

\email{brundan@uoregon.edu, davidson@uoregon.edu}

\thanks{2010 {\it Mathematics Subject Classification}: 17B10, 18D10.}
\thanks{Research supported in part by NSF grant DMS-1161094.}

\begin{abstract}
This is an expository article developing some aspects
of the theory of categorical actions of Kac-Moody algebras in the
spirit of works of Chuang--Rouquier, Khovanov--Lauda, Webster, and many others.
\end{abstract}

\maketitle  

\section{Introduction}

This work is a contribution to the study of
categorifications of Kac-Moody algebras and their integrable modules.
The subject has its roots in Lusztig's construction  of canonical bases of quantum groups
using geometry of quiver varieties \cite{Lp} (which happened around 1990). It is
intimately connected to the rich combinatorial theory of crystal bases
initiated at the same time by Kashiwara \cite{Kp}. 
In the decade after that,
several other examples were studied related to the
representation theory of the symmetric group 
and associated Hecke
algebras \cite{LLT, Ariki, G} (building in particular on ideas of
Bernstein and Zelevinsky \cite{BZ}),
rational representations of the general linear group \cite{BK},
and the Bernstein-Gelfand-Gelfand category $\mathcal O$ associated to the
general linear Lie (super)algebra \cite{BFK, B1}.
The first serious attempt to put these examples into a unified axiomatic framework
was undertaken by Chuang and Rouquier \cite{CR}. They built a powerful
structure theory for studying
categorical actions of $\mathfrak{sl}_2$, which they applied notably
to prove Brou\'e's Abelian Defect Conjecture for the symmetric groups.

Another major breakthrough came in 2008, when Khovanov and Lauda \cite{KL1,KL2,KL3} 
and Rouquier \cite{Rou} independently introduced some new algebras called
{\em quiver Hecke algebras}, and used them to construct {\em Kac-Moody
  2-categories} associated to arbitrary Kac-Moody algebras.
The definitions of Kac-Moody 2-categories given by Khovanov and Lauda
and by
Rouquier look quite different, so that for a while
subsequent works split into two different schools according to which
definition they were following.
In fact, Rouquier's and Khovanov and Lauda's definitions are
equivalent,
as was established
by the first author \cite{B}.

In this (mostly expository) article, we will
revisit some of Rouquier's foundational
definitions in the light of \cite{B}.
We do this using the diagrammatic formalism
of Khovanov and Lauda wherever possible. From the outset, we have systematically incorporated
the better choice of normalization for the second adjunction of the
Kac-Moody 2-category suggested by \cite{BHLW}.
For a survey with greater emphasis on the connections to
geometry, we refer the reader to Kamnitzer's text \cite{Kam}.

Another of our goals is to extend several of the existing results so that they may
be applied in some more general situations.
To explain the novelty, we need some definitions.
Let $\k$ be an algebraically closed field and $\Vec$
be the category
of (small) vector spaces.
A {\em finite-dimensional category} is a small $\k$-linear category
$\A$ all of whose morphism spaces are finite-dimensional.
Let $\rMod\A$ denote the
functor category $\mathcal{H}om(\A^{\op}, \Vec)$
of right modules over $\mathcal A$.
We say that $\A$ is {\em Artinian}
if all of
the finitely generated objects and the finitely cogenerated
objects in $\rMod\A$ have
finite length (see also Remark~\ref{cats}). 
A {\em locally Schurian category}
is an Abelian category
that is equivalent to $\rMod\A$
for some finite-dimensional category $\A$.
If in addition $\A$ is Artinian,
then the full subcategory of $\rMod\A$
consisting of all objects of finite length is a {\em Schurian
  category}
in the sense of \cite[$\S$2.1]{BLW}.

In the Abelian setting, the general 
structural results about 2-representations of Kac-Moody 2-categories obtained in
\cite{CR, Rou, R2} typically only apply to 
categories
in which all objects have finite length and whose irreducible objects
satisfy Schur's Lemma. If one wants there to be enough projectives
and injectives too,
this means that one is working in a Schurian category in the sense
just defined.
The main new contribution of this paper is to extend some of
these structural results to {locally} Schurian categories.

The motivation for doing this from a Lie theoretic
perspective is as follows.
Let $\g$ be a symmetrizable Kac-Moody algebra with Chevalley
generators $\{e_i, f_i\:|\:i \in I\}$, 
weight lattice $P$, etc...
Recall that a $\g$-module $V$ is {\em integrable} if it 
decomposes into weight spaces as $V = \bigoplus_{\lambda \in P}
V_\lambda$, and each $e_i$ and $f_i$ acts
locally nilpotently.
In order to categorify
an integrable module with finite-dimensional weight spaces,
it is reasonable to hope that one can use
a finite-dimensional category whose blocks are
finite-dimensional algebras, in which case all subsequent constructions can be performed in
the Schurian category consisting of finite-dimensional modules over these algebras.
Examples include the {\em minimal categorification}
$\Lm(\kappa)$ of the integrable
highest weight module $L(\kappa)$ of (dominant) highest weight $\kappa$ defined already by Khovanov, Lauda and
Rouquier via cyclotomic quiver Hecke algebras, and the minimal
categorifications 
$\Lm(\kappa_1,\dots,\kappa_n)$
of tensor products 
$L(\kappa_1)\otimes\cdots\otimes L(\kappa_n)$ of integrable highest
weight modules
introduced by Webser in \cite{Web}.

In \cite{Wcan}, Webster also investigated categorifications of more general tensor products involving both
integrable lowest weight and highest weight modules; see also \cite{BW} for the
construction of canonical bases in such mixed tensor products.
Away from finite type, these modules have infinite-dimensional weight spaces.
The candidates for their minimal categorifications 
suggested by Webster are finite-dimensional categories
which are not Artinian in general, so that
the locally Schurian setting becomes essential.
In type A, there are some closely related examples
arising from
the cyclotomic oriented Brauer categories of \cite{BCNR}, which in
level one are 
Deligne's categories
$\underline{\operatorname{Re}}\!\operatorname{p}(GL_t)$
(e.g. see \cite{AHS}).
These also fit into the framework of this article.

Here is a guide to the organization of the remainder of the article.

In Section 2, we set up the basic algebraic foundations of
  locally Schurian categories. 
Everything here is either well known (e.g. see \cite{M}), or it is an obvious extension of 
classical results. However our language is new.

Section 3 is an exposition of the definition of Kac-Moody
  2-category, based mainly on \cite{B}. 
We also discuss briefly the
  graded version of the Kac-Moody 2-category. This is important
  as it makes 
the connection to quantum groups, although we will not emphasize it
  elsewhere in the article.

Section 4 begins with a review of Rouquier's theory of
2-representations of Kac-Moody 2-categories. We recall his
definition of the universal categorification
$\L(\kappa)$ of $L(\kappa)$
from \cite[$\S$4.3.3]{R2}. The minimal
categorification $\Lm(\kappa)$ is a certain finite-dimensional
specialization of $\L(\kappa)$; it can be realized equivalently in
terms of cyclotomic quiver Hecke algebras. We also introduce a
2-representation $\L(\kappa'|\kappa)$, which is expected to play the role of
universal categorification for the
tensor product $L(\kappa'|\kappa) := L'(\kappa') \otimes L(\kappa)$ 
of the integrable lowest weight module $L'(\kappa')$ of (anti-dominant)
lowest weight $\kappa'$ 
with  the integrable highest weight module $L(\kappa)$ (see Construction~\ref{bad}).
The minimal categorification $\Lm(\kappa'|\kappa)$ 
from \cite[Proposition 5.6]{Wcan} 
is a certain finite-dimensional specialization
of $\L(\kappa'|\kappa)$.
After that, we focus on nilpotent categorical actions on
  locally Schurian categories. Any such structure has
  an {\em associated crystal} in the sense of Kashiwara; for example, the
  crystal associated to $\Lm(\kappa)$ is the highest weight crystal
  $\B(\kappa)$. This has already found many
  striking applications in classical representation theory; e.g. see \cite{FK} (the oldest)
 and \cite{DVV} (the most recent at the time of writing).

\vspace{2mm}
\noindent
{\em Acknowledgements.}
We thank Ben Webster for sharing his ideas 
in \cite{erratum},
and for suggesting the reduction argument used in the proof of
Theorem~\ref{jolly}.
Also we thank Aaron Lauda for giving us the opportunity to write this
survey, and the referee for many helpful suggestions.
 
\vspace{2mm}
\noindent
{\em Notation.}
Throughout,
we work over an algebraically closed field $\k$. This means that all (2-)categories and
(2-)functors will be assumed to be $\k$-linear by default.

\section{Locally Schurian categories}

In this section, we introduce our language of locally Schurian categories.

\subsection{Locally unital algebras}
A {\em locally unital algebra} is an associative
(not necessarily unital) algebra $A$ equipped with a small family
$(1_x)_{x \in X}$ of mutually orthogonal idempotents such that 
$$
A = \bigoplus_{x,y \in X} 1_y A 1_x.
$$
A {\em locally unital homomorphism} (resp. {\em isomorphism}) between two locally unital algebras
is an algebra homomorphism (resp. isomorphism) which takes 
distinguished idempotents to distinguished idempotents.
Also, we say that $A$ is a {\em contraction} of $B$
if there is an algebra isomorphism $A\stackrel{\sim}{\rightarrow} B$ 
sending each distinguished idempotent in $A$ to
a sum of distinguished idempotents in $B$.
 
We say that $A$ is {\em locally Noetherian} (resp. {\em locally
  Artinian})
if all of the left ideals $A 1_x$ and all of the right ideals $1_y A$ satisfy the Ascending Chain Conditon (resp. the
Descending Chain Condition).
One can also define analogs of
{\em left} (resp. {\em right}) Noetherian or Artinian
for locally unital algebras, requiring just that all the left ideals
$A 1_x$ (resp. the
right ideals $1_y A$) satisfy the appropriate chain condition.
Unlike in the unital setting, locally left/right Artinian does not imply
locally left/right Noetherian (but
see Lemma~\ref{munoz} below). The
following example of a locally unital algebra that is locally left Artinian
but not locally left Noetherian is taken from the end of \cite[$\S$3]{M}:
consider the locally unital algebra of upper triangular matrices over $\k$ with rows
and columns indexed by the totally ordered set $\N \cup
\{\infty\}$, all but finitely many of whose entries are zero.

All {\em modules} over a locally unital algebra will be assumed to be locally unital
without further mention; for a right module $V$ 
this means that $V=\bigoplus_{x \in X} V 1_x$ as a direct sum of subspaces.
If $V$ is any $A$-module satisfying ACC, it is clearly finitely
generated. Conversely, assuming that 
$A$ is locally Noetherian (resp.\
locally Artinian), finitely
generated modules satisfy ACC (resp. DCC). We deduce in the
locally Noetherian
case that submodules of finitely generated modules are finitely generated.

Let $\rMod A$ be the category of all right
$A$-modules. We'll also need the following full subcategories of
$\rMod A$:
\begin{itemize}
\item $\rdMod A$ consisting of all 
{\em locally finite-dimensional modules},
i.e. right modules $V$ with $\dim V 1_x < \infty$ for all $x \in X$;
\item $\rfMod A$ consisting of all {finitely generated modules};
\item $\rpMod A$ consisting of all {finitely generated projective
  modules}.
\end{itemize}
Replacing ``right'' with ``left'' everywhere here, we obtain analogous
categories
$A \lMod$, $A \ldMod$, $A \lfMod$ and $A \lpMod$ of left modules.
There are contravariant equivalences
$$
\circledast:\rdMod A \rightarrow A\ldMod,
\qquad
\#:\rpMod A \rightarrow A\lpMod
$$
defined as follows: the {\em dual} $V^\circledast$ of a locally finite-dimensional right module $V$ is the left module
$\bigoplus_{x \in X} \Hom_{\k}(V 1_x, \k)$;
the {\em dual} $P^\#$ of a finitely generated projective right module $P$ is
the left module
$\Hom_A(P, A)$.
If $V$ and $P$ are left modules instead, their duals
${^\circledast} V$ and ${^\#}P$ are the right modules defined analogously.

\begin{remark}\label{cats}
The data of a locally unital algebra $A$ is the same as the data of a
small category $\A$ with object set $X$ and morphisms
$\Hom_{\A}(x,y) := 1_y A 1_x$. In this incarnation, locally unital
algebra homomorphisms correspond to functors.
A right $A$-module becomes a 
functor $\A^{\op} \rightarrow \Vec$,
and then a module homomorphism is a natural transformation of
functors.
We say $\A$ is {\em Noetherian} (resp. {\em Artinian}) if $A$ is
locally Noetherian (resp. locally Artinian)
in the sense already defined.
All of the other
notions introduced in this subsection can be
recast in this more categorical language too, as was done in \cite{M}. For example, 
the projective module $1_x A$ corresponds to the functor
$\Hom_{\A}(-,x):\A^{\op} \rightarrow \Vec$.
Then the Yoneda Lemma asserts that there is a fully faithful functor
$\A \rightarrow \rpMod A$
sending $x \in \ob \A$ to $1_x A$, and $a \in \Hom_{\A}(x,y)$
to the homomorphism $1_x A \rightarrow 1_y A$ defined by left
multiplication by $a \in 1_y A 1_x$.
This extends canonically to an equivalence of categories
\begin{equation}\label{yoneda2}
\dot \A \rightarrow \rpMod A,
\end{equation}
where $\dot \A$ denotes
the {\em additive Karoubi envelope} of $\A$, that is, the idempotent
completion of the additive envelope of $\A$.
\end{remark}

For locally unital algebras $A$ and $B$
with distinguished idempotents $(1_x)_{x \in X}$ and $(1_y)_{y \in
  Y}$, respectively,
an $(A,B)$-bimodule $M = \bigoplus_{x \in X, y \in Y} 1_x M 1_y$ determines
an adjoint pair $(T_M, H_M)$ of functors
\begin{align*}
T_M := - \otimes_A M&:\rMod A \rightarrow \rMod B,
\\H_M := \bigoplus_{x \in X} \Hom_B(1_x M, -)&:\rMod B \rightarrow
\rMod A.
\end{align*}
Here are a couple of useful facts about tensoring with bimodules.
First, there is a natural isomorphism
\begin{align}\label{see}
V \otimes_A \Hom_B(Q, M)
&\stackrel{\sim}{\rightarrow}
\Hom_B(Q, V \otimes_A M),
&v \otimes f &\mapsto (q \mapsto v \otimes f(q))\\\intertext{for all right $A$-modules $V$ and finitely
generated projective right $B$-modules $Q$; cf. \cite[20.10]{AF}.
Also, given another locally unital algebra $C$
and a right exact functor $E:\rMod B \rightarrow \rMod C$
commuting with direct sums,
$EM$ is an $(A,C)$-bimodule, and
there is a natural isomorphism of right $C$-modules}
\label{dee}
V \otimes_A E M &\stackrel{\sim}{\rightarrow}
E (V \otimes_A M),&
v \otimes n &\mapsto E(f_v)(n)
\end{align}
for all right $A$-modules $V$,
where $f_v:M \rightarrow V \otimes_A M$ is the right $C$-module
homomorphism $m \mapsto v \otimes m$. The proof of this involves
a reduction to the case $V = A$ using the Five Lemma.

\begin{definition}\label{ab}
We say that an $(A,B)$-bimodule $M$ is {\em left rigid} if it has a
left dual
in the 2-category of bimodules; see \cite[$\S$2.10]{EGNO} for our
conventions here.
It means that
there
exists a $(B,A)$-bimodule
$M^\#$, 
an $(A,A)$-bimodule
homomorphism
$\coev:A \rightarrow M \otimes_B M^\#$,
and
a $(B,B)$-bimodule homomorphism
$\ev:M^\# \otimes_A M \rightarrow B$, 
such that the compositions
\begin{align*}
M \stackrel{\operatorname{can}}{\longrightarrow}
A \otimes_A M \stackrel{\coev \otimes 1}{\longrightarrow} &M \otimes_B M^\# \otimes_A M
\stackrel{1 \otimes \ev}{\longrightarrow}  M \otimes_B B
\stackrel{\operatorname{can}}{\longrightarrow} M,\\
M^\# \stackrel{\operatorname{can}}{\longrightarrow}
M^\#\otimes_A A \stackrel{1 \otimes \coev}{\longrightarrow}& M^\#
\otimes_A M \otimes_B M^\#
\stackrel{\ev \otimes 1}{\longrightarrow}  B \otimes_B M^\#
\stackrel{\operatorname{can}}{\longrightarrow} M^\#
\end{align*}
are the identities. In other words,
$T_{M^\#}$ is right adjoint to $T_M$.
We say that $M$ is {\em right rigid} if it has a right dual,
i.e. there
exists a $(B,A)$-bimodule
${^\#}M$ such that
$T_{{^\#}\!M}$ is left adjoint to $T_M$.
Finally, we say $M$ is {\em rigid}
if it is both left and right rigid, and {\em sweet}\footnote{
The language ``sweet bimodule'' appears in \cite[$\S$2.6]{K}, but
(in view of Lemma~\ref{rigidity})
the ones defined there are just what we call rigid bimodules,
since it is not assumed that ${^\#}\!M\cong M^\#$.
Note though that 
the important {examples} constructed in \cite{K} do satisfy this extra
hypothesis, so they are sweet in our sense too.
} if in addition $M^{\#} \cong {^\#}M$ as $(B,A)$-bimodules.
\end{definition}

The following is essentially \cite[Ex. 2.10.16]{EGNO}.

\begin{lemma}\label{rigidity}
Let $A$ and $B$ be locally unital algebras
with distinguished idempotents $(1_x)_{x \in X}$ and $(1_y)_{y \in
  Y}$, respectively.
Let $M$ be an $(A,B)$-bimodule.
\begin{enumerate}
\item
The bimodule $M$ is left rigid if and only if 
$1_x M$ is finitely generated and
projective as a right $B$-module for each $x \in X$.
In that case, $M^\# \cong \bigoplus_{x \in X} (1_x M)^\#$.
\item
It is right rigid if and only if
$M 1_y$
is finitely generated and
projective as a left $A$-module for each $y \in Y$.
In that case, ${^\#}M \cong \bigoplus_{y \in Y} {^\#}(M1_y)$.
\end{enumerate}
\end{lemma}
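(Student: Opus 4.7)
The plan is to recognize left (resp.\ right) rigidity of $M$ as the condition that the canonical right adjoint $H_M = \bigoplus_x\Hom_B(1_xM,-)$ of $T_M$ is itself of tensor type, and then to invoke the classical characterization of finitely generated projectives via Hom functors. I will treat (1) in detail; (2) is then entirely symmetric, applied to the functor $M \otimes_B -\colon B\lMod \to A\lMod$ (equivalently, after swapping $A\leftrightarrow A^{\op}$ and $B\leftrightarrow B^{\op}$).

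By uniqueness of right adjoints, $M$ is left rigid if and only if there exists a $(B,A)$-bimodule $N$ with $T_N \cong H_M$, in which case $N \cong M^\#$. For the ``only if'' direction of (1), I would start from $T_{M^\#} \cong H_M$ via uniqueness of adjoints. As a tensor functor, $T_{M^\#}$ is right exact and preserves arbitrary direct sums, so the same must hold for each summand $\Hom_B(1_x M, -)$ of $H_M$. Preservation of direct sums forces $1_x M$ to be finitely generated, and exactness forces it to be projective. For the ``if'' direction, assuming each $1_x M$ is finitely generated projective over $B$, I will set $M^\# := \bigoplus_{x\in X} \Hom_B(1_x M, B)$ with the obvious left $B$-action together with the right $A$-action $(f\cdot a)(m) := f(am)$ for $a \in 1_x A 1_y$ and $f \in \Hom_B(1_x M, B)$, so that $M^\# 1_y = (1_y M)^\#$. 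The classical hom-tensor isomorphism $N \otimes_B \Hom_B(P, B) \xrightarrow{\sim} \Hom_B(P, N)$, $n\otimes f \mapsto (p \mapsto n f(p))$, valid for $P$ finitely generated projective (by reduction to $P = B$ and passage to summands), will assemble over $x$ into a natural isomorphism $T_{M^\#}\cong H_M$, exhibiting $T_{M^\#}$ as a right adjoint to $T_M$; the identification $M^\#\cong\bigoplus_x(1_xM)^\#$ is then built into the construction.

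The hard part, I expect, will be pure bookkeeping in the locally unital setting: one must verify that the $(B,A)$-bimodule structure on $M^\#$ respects the idempotent decomposition as stated, and invoke the locally unital analogue of the fact that $\Hom_B(P,-)$ is exact and preserves direct sums if and only if $P$ is finitely generated projective. Once these are in place, the argument for (2) is parallel with left and right interchanged.
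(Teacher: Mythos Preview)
Your proposal is correct and follows essentially the same route as the paper: both directions hinge on identifying $H_M$ with a tensor functor $T_{M^\#}$, and the ``if'' direction is exactly the paper's argument via the natural isomorphism (\ref{see}). The only minor difference is in the ``only if'' direction for finite generation: you deduce it from $\Hom_B(1_xM,-)$ preserving direct sums, whereas the paper writes $\coev(1_x)=\sum_{i=1}^{n_x} v_{x,i}\otimes f_{x,i}$ and uses the zig-zag identity to show that the $v_{x,i}$ generate $1_xM$; both are standard and equivalent.
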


\begin{proof}
(1)
Suppose that $M$ possesses a left dual $M^\#$.
Then $T_M$ has a right exact right adjoint $T_{M^\#}$, so
$T_M$ sends projectives to projectives.
Hence, $1_x M \cong T_M(1_x A)$ is projective for each $x \in X$.
Let $\coev(1_x) = \sum_{i=1}^{n_x} v_{x,i} \otimes f_{x,i}$
for $v_{x,i} \in 1_x M$ and $f_{x,i} \in (M^\#) 1_x$.
Then any $v \in 1_x M$ is equal to $(1 \otimes \ev)(v_{x,i} \otimes
f_{x,i} \otimes v) \in \sum_{i=1}^{n_i} v_{x,i} A$.
This shows that $1_x M$ is finitely generated.

Conversely, suppose that each $1_x M$ is finitely generated and projective as a
right module.
Then (\ref{see}) implies that $H_M \cong T_{M^\#}$ where
$M^\# := \bigoplus_{x \in X} (1_x M)^\#$.
Hence, we have constructed a bimodule $M^\#$ such that
$T_{M^\#}$ is right adjoint to $T_M$, proving that $M$ is left rigid.

(2) Similar (working with left modules instead of right ones).
\end{proof}

By a {\em projective generating family} for an Abelian category
$\C$, we mean
a small family $(P(x))_{x\in X}$ of compact\footnote{For 
categories of the form $\rMod A$ for some locally unital algebra
  $A$, a projective is compact if and only if it is finitely
  generated.}
 projective objects
such that for each $V \in\ob\C$ there is some $x \in X$ with
$\Hom_{\C}(P(x), V) \neq 0$.
Just like in \cite[Exercise 5.F]{F}, one can show that
an Abelian
category $\C$ is equivalent to $\rMod A$ for some locally
unital algebra $A$ if and only if $\C$ possesses arbitrary direct sums and
has a projective generating family; see also \cite[Theorem 3.1]{M}.
We just need this in the following special case, which is the locally
unital analog of the classical Morita Theorem:

\begin{theorem}\label{mor}
Let $B$ be a locally unital algebra. Suppose that
$(P(x))_{x \in X}$ is a projective generating family
for $\rMod B$. Let
$$
A :=\bigoplus_{x,y \in X} \Hom_{B}(P(x), P(y)),
$$
viewed as a locally unital algebra with distinguished idempotents
$(1_x := 1_{P(x)})_{x \in X}$.
Let $P := \bigoplus_{x\in X} P(x)$, which
is an $(A,B)$-bimodule.
\begin{enumerate}
\item The functors 
$T_P = - \otimes_A P$
and
$H_P = \bigoplus_{x \in X} \Hom_{B}(1_x P, -)$ are quasi-inverse equivalences 
between the categories $\rMod A$ and $\rMod B$.
\item
We have that $H_P \cong T_Q$
where $Q := P^\#$.
\end{enumerate}
Thus, we have constructed a sweet $(A,B)$-bimodule $P$ and a sweet $(B,A)$-bimodule
$Q$ such that $P \otimes_B Q \cong A$ and $Q \otimes_A P \cong B$ as
bimodules.
\end{theorem}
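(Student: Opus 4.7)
The plan is to run the classical Morita argument in the locally unital setting. First I would apply Lemma~\ref{rigidity}(1) to the $(A,B)$-bimodule $P$: by the footnote equating compact projectives with finitely generated projectives in $\rMod B$, each $1_x P = P(x)$ is finitely generated projective as a right $B$-module, so $P$ is left rigid with left dual $Q := P^\# \cong \bigoplus_{x \in X} P(x)^\#$, and $T_Q$ is right adjoint to $T_P$. Since $H_P$ is also right adjoint to $T_P$ by the locally unital tensor--hom adjunction~(\ref{see}), uniqueness of adjoints yields $H_P \cong T_Q$. This proves~(2) and tells us that $H_P$ is exact (each $P(x)$ being projective) and preserves arbitrary direct sums (each $P(x)$ being compact), while $T_P$ is always right exact and preserves direct sums.

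For part~(1), I would check that the unit $\eta : \operatorname{Id} \Rightarrow H_P T_P$ and counit $\varepsilon : T_P H_P \Rightarrow \operatorname{Id}$ are both natural isomorphisms. A direct computation gives $T_P(1_x A) \cong 1_x P = P(x)$ and $H_P(P(y)) = \bigoplus_{x \in X} \Hom_B(P(x), P(y)) \cong 1_y A$ as right $A$-modules, and one verifies that $\eta_{1_x A}$ and $\varepsilon_{P(y)}$ are the identity maps under these identifications. Because $(P(x))_{x \in X}$ is a projective generating family, every $W \in \operatorname{ob} \rMod B$ admits a presentation $\bigoplus_i P(x_i) \to \bigoplus_j P(x_j) \to W \to 0$; applying the right exact, direct-sum-preserving composite $T_P H_P$ and invoking the Five Lemma then forces $\varepsilon_W$ to be an isomorphism. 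The unit argument is completely symmetric, using that the standard projectives $(1_x A)_{x \in X}$ likewise form a projective generating family for $\rMod A$.

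Finally, the bimodule isomorphisms $P \otimes_B Q \cong A$ and $Q \otimes_A P \cong B$ drop out of~(1): the quasi-inverse relations $T_Q T_P \cong \operatorname{Id}_{\rMod A}$ and $T_P T_Q \cong \operatorname{Id}_{\rMod B}$ amount to $T_{P \otimes_B Q} \cong T_A$ and $T_{Q \otimes_A P} \cong T_B$, whence the underlying bimodule isomorphisms are recovered by evaluating on the regular bimodules. Sweetness is then automatic, since the left and right adjoints of an equivalence coincide up to natural isomorphism, so ${}^{\#}P \cong P^{\#} = Q$ and ${}^{\#}Q \cong Q^{\#} = P$. The main---and quite modest---obstacle is just verifying that the weak form of ``projective generating family'' really does produce presentations by direct sums of the $P(x)$'s; this reduces to a short cokernel argument using projectivity together with the $\Hom$-nonvanishing condition, after which everything else is standard diagram chasing.
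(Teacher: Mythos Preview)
Your proof is correct and follows essentially the same classical Morita route as the paper: Lemma~\ref{rigidity}(1) for $H_P \cong T_Q$, and a presentation-plus-Five-Lemma argument for the equivalence (the paper cites \cite[22.2]{AF} for exactly this ``standard argument''). The only cosmetic differences are that the paper dispatches $H_P \circ T_P \cong 1_{\rMod A}$ in one line directly from~(\ref{see}) rather than via generators, and that your citation of~(\ref{see}) for the tensor--hom adjunction is slightly off --- the adjoint pair $(T_P,H_P)$ is asserted in the paragraph just before~(\ref{see}), while (\ref{see}) itself is the isomorphism used inside Lemma~\ref{rigidity}(1).
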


\begin{proof}
The fact that $H_P \circ T_P \cong 1_{\rMod A}$ follows from (\ref{see}). 
Then one deduces that $T_P \circ H_P \cong 1_{\rMod B}$ too by a
standard argument; cf. 
\cite[22.2]{AF}.
Finally Lemma~\ref{rigidity}(1) implies that $H_P \cong T_Q$.
\end{proof}

\begin{corollary}\label{eagle}
For locally unital algebras $A$ and $B$, the following are equivalent:
\begin{enumerate}
\item the categories $\rMod A$ and $\rMod B$ are equivalent;
\item the categories $\rpMod A$ and $\rpMod B$ are equivalent;
\item the categories $A\lpMod$ and $B\lpMod$ are equivalent;
\item the categories $A\lMod$ and $B\lMod$ are equivalent.
\end{enumerate}
\end{corollary}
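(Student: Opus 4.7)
The plan is to establish the circle of implications $(1)\Leftrightarrow(2)\Leftrightarrow(3)\Leftrightarrow(4)$, using Theorem~\ref{mor} for the passage between module categories and their subcategories of finitely generated projectives, and the contravariant duality $\#$ to swap right and left.

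For $(1)\Rightarrow(2)$, the observation is that $\rpMod A$ is characterized inside $\rMod A$ as the full subcategory of compact projective objects (this is the footnote remark), so any equivalence $\rMod A\simeq\rMod B$ restricts. For $(2)\Rightarrow(1)$, given an equivalence $F:\rpMod A\to\rpMod B$, I would set $P(x):=F(1_xA)$ and try to apply Theorem~\ref{mor} to this family. Each $P(x)$ is finitely generated projective, and the only nontrivial point is that $(P(x))_{x\in X}$ is a \emph{generating} family for $\rMod B$. This is where I expect to pause: to see it, fix any $0\ne V\in\rMod B$ and pick $y$ with $V1_y\ne 0$, so that $\Hom_B(1_yB,V)\ne 0$. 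Applying the inverse equivalence $G$, the object $G(1_yB)\in\rpMod A$ is a summand of some finite sum $\bigoplus_{x\in S}1_xA$, so $\Hom_A(1_xA,G(1_yB))\ne 0$ for some $x\in S$, and transporting through $F$ gives $\Hom_B(P(x),1_yB)\ne 0$; composing with $1_yB\to V$ produces the required nonzero map $P(x)\to V$. Once $(P(x))_{x\in X}$ is known to be a projective generating family, Theorem~\ref{mor} gives $\rMod B\simeq\rMod A'$ with $A'=\bigoplus_{x,y}\Hom_B(P(x),P(y))$, and full faithfulness of $F$ immediately identifies $A'$ with $A$ as locally unital algebras.

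For $(2)\Leftrightarrow(3)$, I would invoke the contravariant equivalence $\#:\rpMod A\to A\lpMod$ from Lemma~\ref{rigidity} (the finitely generated projective case of the duality set up before Remark~\ref{cats}). This identifies $A\lpMod$ with $(\rpMod A)^{\op}$, and likewise for $B$, so $\rpMod A\simeq\rpMod B$ holds if and only if $(\rpMod A)^{\op}\simeq(\rpMod B)^{\op}$ holds, i.e.\ $A\lpMod\simeq B\lpMod$. Finally, $(3)\Leftrightarrow(4)$ is the exact mirror of $(1)\Leftrightarrow(2)$, obtained by running the same argument with the roles of left and right modules swapped (Theorem~\ref{mor} applies symmetrically, since one could equally well have stated and proved it for left modules).

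The only real obstacle is verifying the generating property in $(2)\Rightarrow(1)$; everything else is formal manipulation of the equivalences and dualities already produced in Theorem~\ref{mor} and Lemma~\ref{rigidity}.
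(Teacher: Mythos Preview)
Your approach matches the paper's exactly: restrict for $(1)\Rightarrow(2)$, apply Theorem~\ref{mor} to $P(x)=F(1_xA)$ for $(2)\Rightarrow(1)$, use the contravariant duality $\#$ for $(2)\Leftrightarrow(3)$, and pass to opposite algebras for $(3)\Leftrightarrow(4)$. There is one small slip in your generating argument: a nonzero map $P(x)\to 1_yB$ composed with a nonzero map $1_yB\to V$ need not be nonzero. The clean fix is to note that applying $F$ to the splitting shows $1_yB$ is a summand of $\bigoplus_{x\in S}P(x)$, so $\Hom_B(1_yB,V)$ is a direct summand of $\bigoplus_{x\in S}\Hom_B(P(x),V)$, and nonvanishing of the former forces $\Hom_B(P(x),V)\ne 0$ for some $x$.
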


\begin{proof}
(1) $\Rightarrow$ (2). The restriction of an equivalence $\rMod A \rightarrow \rMod B$
gives an equivalence $\rpMod A \rightarrow \rpMod B$.

(2) $\Rightarrow$ (1). Let $F:\rpMod A \rightarrow \rpMod B$ be an
equivalence of categories. 
Let $(1_x)_{x \in X}$ be the distinguished idempotents in $A$.
Then
$(P(x) := F(1_x A))_{x \in X}$ 
is a projective generating family
for $\rMod B$ such that
$A \cong \bigoplus_{x, y \in X} \Hom_B(P(x), P(y))$.
Now apply Theorem~\ref{mor}.

(3) $\Leftrightarrow$ (4). This is the same as (1) $\Leftrightarrow$
(2) with $A$ and $B$ replaced by the opposite algebras.
 
(2) $\Leftrightarrow$ (3). This follows as $\rpMod A$ (resp. $\rpMod
B$) is contravariantly equivalent to $A \lpMod$ (resp. $B \lpMod$).
\end{proof}

Two locally unital algebras $A$ and $B$ are said to be {\em Morita
  equivalent} if the conditions of Corollary~\ref{eagle} are satisfied.
For example,
if $A$ is a contraction of $B$, then the categories $\rMod A$ and
$\rMod B$
are obviously isomorphic. Hence, $A$ and $B$ are Morita equivalent.
For another simple example, let $N$ be any (possibly infinite but small) set
and $M_N(\k)$ be the algebra of $N \times N$ matrices with entries in
$\k$, all but finitely many of which are zero. 
This is a locally unital algebra with distinguished idempotents
given by the diagonal matrix units $\{e_{i,i}\:|\:i \in N\}$.
Applying Theorem~\ref{mor}
with $B := \k, X := N$ and taking each $P(x)$ to be a copy of $\k$, 
we see that $M_N(\k)$ is Morita equivalent to the ground field $\k$.

\begin{remark}
Suppose that $\A$ and $\mathcal B$ are the categories
associated to locally unital algebras $A$ and $B$ as
in Remark~\ref{cats}.
We say that $\A$ and $\mathcal B$ are 
{\em Morita equivalent} if their additive Karoubi envelopes
$\dot \A$ and $\dot{\mathcal B}$ are
equivalent. In view of Corollary~\ref{eagle} and (\ref{yoneda2}), this
is equivalent to the algebras $A$ and $B$
being Morita equivalent as above. 
\end{remark}

The final theorem in this subsection 
is concerned with adjoint functors.
Again this is classical in the unital setting.

\begin{theorem}\label{name}
Let $B$, $P = \bigoplus_{x \in X} P(x)$ and $A$ be as in
Theorem~\ref{mor}, so
that  $H_P:\rMod B \rightarrow \rMod A$ is an equivalence of categories.
Suppose we are given a functor $E:\rMod B \rightarrow \rMod B$.
Then $E$ possesses a right adjoint if and only if it
is right exact
and commutes with direct sums.
In that case, let
$$
M := \bigoplus_{x, y \in X} \Hom_{B}(P(x), E P(y))
$$
viewed as an $(A,A)$-bimodule in the natural way.
Then the diagram
$$
\begin{CD}
\rMod B&@>H_P>>& \rMod A\\
@VE VV&&@VV T_M V\\
\rMod B&@>>H_{P}>& \rMod A\\
\end{CD}
$$
commutes up to a canonical isomorphism.
\end{theorem}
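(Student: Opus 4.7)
The plan is to prove both implications. The \emph{only if} direction is standard: if $E$ has a right adjoint then $E$ is a left adjoint, hence preserves all small colimits, so in particular $E$ commutes with arbitrary direct sums and is right exact. For the converse, the strategy is to transport $E$ through the Morita equivalence of Theorem~\ref{mor} and apply an Eilenberg--Watts-style argument. Set $F := H_P \circ E \circ T_P : \rMod A \to \rMod A$. Since $T_P$ and $H_P$ are quasi-inverse equivalences, $F$ inherits right exactness and direct-sum-preservation from $E$, and a direct unwinding gives
$$
F(1_y A) \;=\; H_P(E P(y)) \;=\; \bigoplus_{x \in X} \Hom_B(P(x), EP(y)) \;=\; 1_y M
$$
for each $y \in X$, with $M$ the bimodule of the statement. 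The right $A$-action on $F(1_y A)$ is intrinsic, while the left $A$-action is recovered functorially: morphisms $1_y A \to 1_{y'} A$ of right $A$-modules correspond via Yoneda to elements of $1_{y'} A 1_y$, and $F$ sends these to the left multiplication maps $1_y M \to 1_{y'} M$.

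Next I would construct a natural transformation $\theta_V : V \otimes_A M \to F(V)$ by the formula $\theta_V(v \otimes n) := F(\rho_v)(n)$, where $v \in V 1_y$, $n \in 1_y M = F(1_y A)$, and $\rho_v : 1_y A \to V$ sends $a \mapsto va$. One checks that $\theta_{1_y A}$ reduces to the canonical iso $1_y A \otimes_A M \cong 1_y M$, and that every right $A$-module $V$ admits a presentation $\bigoplus_i 1_{y_i} A \to \bigoplus_j 1_{y_j} A \to V \to 0$ using the projective generators $1_y A$. Since both $- \otimes_A M$ and $F$ are right exact and commute with direct sums, comparing this presentation via $\theta$ and applying the Five Lemma forces $\theta_V$ to be an iso for all $V$. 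Hence $F \cong T_M$. The functor $T_M$ admits the right adjoint $H_M$, and transporting this through the equivalence yields a right adjoint to $E$. Finally, post-composing the iso $H_P \circ E \circ T_P \cong T_M$ with $H_P$ on the right and invoking $T_P \circ H_P \cong 1_{\rMod B}$ from Theorem~\ref{mor} produces the asserted natural iso $H_P \circ E \cong T_M \circ H_P$.

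The main obstacle is executing this locally unital Eilenberg--Watts argument carefully: one must verify that the left $A$-action on $M$ built from $F$ is associative and matches the post-composition structure $a \cdot f = E(a) \circ f$ for $a \in 1_{y'} A 1_y$ implicit in the statement, that $\theta_V$ descends to a well-defined map on $V \otimes_A M$ and is natural in $V$, and that the reduction via projective presentations is compatible with the bimodule structure throughout. None of these steps is genuinely difficult, being formally parallel to the classical unital Watts--Eilenberg theorem (cf.\ \cite[22.2]{AF}), but the bookkeeping surrounding the idempotent family $(1_x)_{x \in X}$ requires attention.
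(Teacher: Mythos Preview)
Your proof is correct and follows essentially the same route as the paper. The paper's version is more compressed because it has already packaged your Eilenberg--Watts argument into the earlier identities (\ref{see}) and (\ref{dee}): applying (\ref{dee}) gives $E\circ T_P\cong T_{EP}$, then (\ref{see}) gives $H_P\circ T_{EP}\cong T_{H_P(EP)}=T_M$, which is exactly your isomorphism $F\cong T_M$; your map $\theta_V(v\otimes n)=F(\rho_v)(n)$ is precisely the map appearing in (\ref{dee}), and your Five Lemma reduction is the argument the paper alludes to when it introduces (\ref{dee}).
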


\begin{proof}
It is standard that functors possessing a right adjoint are right exact and
commute with direct sums. Conversely, suppose that $E$ is right exact
and commutes with direct sums.
Using
(\ref{dee}) then
 (\ref{see}),
we get that 
$$
H_{P} \circ E \circ T_P \cong H_{P} \circ T_{EP} \cong
T_{H_{P}(EP)},
$$ 
which is isomorphic to $T_M$ as $H_{P}(EP) \cong M$.
Thus $H_{P} \circ E \circ T_P \cong T_M$.
Composing on the right with the quasi-inverse $H_P$ of $T_P$,
we deduce that $H_{P} \circ E \cong T_M \circ H_P$.
This proves the final part of the theorem.
Hence
$E$ has a right adjoint as $T_M$ has the right adjoint $H_M$.
\end{proof}

\subsection{Finite-dimensional categories}\label{fc}

Let $A$ be a locally unital algebra with distinguished idempotents
$(1_x)_{x \in X}$. We assume in this subsection that
$A$ is {\em locally finite-dimensional}, i.e.
each subspace $1_y A 1_x$ is finite-dimensional.
Equivalently, the associated category $\A$
from Remark~\ref{cats} is a {\em finite-dimensional category}, i.e. it is a small $\k$-linear category all of whose
morphism spaces are finite-dimensional.
All of the right ideals $1_x A$ and the
left ideals $A1_x$ are locally finite-dimensional. Hence, so are their 
duals $(A 1_x)^\circledast$ and $(1_x A)^\circledast$.
Consequently, all finitely generated modules are locally finite-dimensional, as are all finitely cogenerated modules.

\begin{lemma}\label{munoz}
For a locally finite-dimensional locally unital algebra $A$, locally Artinian implies
locally Noetherian.
\end{lemma}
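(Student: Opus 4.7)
The plan is to prove the stronger statement that each $A 1_x$ has finite length (whence both ACC and DCC); the corresponding assertion for $1_y A$ then follows by applying the same argument to $A^{\op}$. The strategy is to embed $A 1_x$ into an injective left $A$-module that is automatically Noetherian by virtue of the DCC hypothesis on the other side.

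First I would examine the socle $\soc(A 1_x)$. Since $A 1_x$ satisfies DCC, every nonzero submodule contains a simple submodule, so $\soc(A 1_x)$ is essential in $A 1_x$. Moreover, being a semisimple submodule of an Artinian module, $\soc(A 1_x)$ is itself Artinian, and an Artinian semisimple module is necessarily a finite direct sum $S_1 \oplus \cdots \oplus S_k$ of simples (otherwise successively removing summands produces an infinite strict descending chain).

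For each $i$ I would choose $y_i \in X$ with $1_{y_i} S_i \ne 0$ and construct an embedding $S_i \hookrightarrow (1_{y_i} A)^{\circledast}$ via the adjunction $\Hom_A(L, N^{\circledast}) \cong \Hom_\k(N \otimes_A L, \k)$ with $N = 1_{y_i} A$: this specializes to $\Hom_A(S_i, (1_{y_i}A)^{\circledast}) \cong (1_{y_i} S_i)^{*} \ne 0$, and any nonzero such map is injective by simplicity of $S_i$. Assembling these embeddings and extending along the essential inclusion $\soc(A 1_x) \hookrightarrow A 1_x$ using injectivity of the target (each $(1_{y_i}A)^{\circledast}$ is injective, being the $\circledast$-dual of the projective right module $1_{y_i} A$), I obtain an embedding $A 1_x \hookrightarrow \bigoplus_{i=1}^k (1_{y_i}A)^{\circledast}$. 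Since $\circledast$ is a contravariant equivalence between locally finite-dimensional right and left modules, it swaps ACC and DCC, so the DCC hypothesis on each $1_{y_i} A$ translates to ACC on $(1_{y_i}A)^{\circledast}$. As ACC passes to finite direct sums and to submodules, $A 1_x$ inherits ACC.

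The main obstacle is ensuring that the socle/essential-extension/injective-hull machinery functions correctly in the locally unital framework: specifically, that $A\lMod$ is a Grothendieck category with well-behaved injective envelopes, and that $(1_y A)^{\circledast}$ is genuinely an injective left module. Both follow from generalities on module categories over locally unital algebras together with the duality $\circledast$ developed earlier in this subsection.
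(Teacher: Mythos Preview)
Your argument is correct, but it takes a longer route than the paper's. The paper dualizes $A 1_x$ directly: DCC on $A 1_x$ gives ACC on $(A 1_x)^{\circledast}$; a module with ACC is finitely generated; since $A$ is locally Artinian, finitely generated right modules satisfy DCC; hence $(A 1_x)^{\circledast}$ has DCC; dualizing back (using that $A 1_x$ is locally finite-dimensional) yields ACC on $A 1_x$. This is a four-line double-dual trick that never mentions socles, injectives, or essential extensions.

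Your approach instead builds an explicit embedding of $A 1_x$ into a finite direct sum of injectives $(1_{y_i} A)^{\circledast}$ via the finite socle, and then transports ACC from the right-hand DCC hypothesis through the duality. This works, and it has the virtue of exhibiting concretely \emph{why} $A 1_x$ is Noetherian (it sits inside a finite sum of modules whose lattice of submodules is anti-isomorphic to that of an Artinian module). But it leans on more machinery: injectivity of $(1_y A)^{\circledast}$, existence of essential extensions, and the Grothendieck-category infrastructure you flag at the end. The paper's argument sidesteps all of that by using only the elementary implication ``ACC $\Rightarrow$ finitely generated'' together with the earlier observation that finitely generated modules over a locally Artinian algebra satisfy DCC.
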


\begin{proof}
As $A 1_x$ satisfies DCC, its dual $(A 1_x)^\circledast$ satisfies
ACC. Hence, $(A 1_x)^\circledast$ is finitely generated, so satisfies
DCC.
Hence, $A 1_x$ satisfies ACC.
Similarly each $1_x A$ has ACC.
\end{proof}

Here are various other basic facts about modules over a locally finite-dimensional locally unital algebra $A$.
For the most part, these are proved by mimicking
the usual proofs in the setting of finite-dimensional algebras, so we
will be quite brief. 
Fix representatives
$\{L(b)\:|\:b\in \B\}$ for the isomorphism classes of irreducible
right $A$-modules\footnote{We use the notation $\B$ here as
  ultimately this set will carry a crystal structure; cf. Definition~\ref{nc}.}.

 \begin{enumerate}
\item[(L1)] If $V$ is finitely generated (resp. locally finite-dimensional) and $W$ is locally finite-dimensional (resp. finitely cogenerated)
then 
$\Hom_A(V,W)$ is finite-dimensional.
\item[(L2)]
{\em Schur's Lemma} holds:
$\End_A(L(b)) \cong \k$
for each $b \in \B$.
\item[(L3)]
Any
finitely generated (resp. finitely cogenerated) module satisfies the
{\em Krull-Schmidt Theorem}.
\item[(L4)]
The category $\rMod A$ is a Grothendieck category, i.e. it is Abelian,
it possesses arbitrary direct sums,
direct
limits of short exact sequences are exact,
and there is a generator (namely,
the regular module $A$ itself).
Hence, by the general theory of Grothendieck categories,
every $A$-module has an injective hull;
moreover, a module $V$
is finitely cogenerated if and only if
its {\em socle} $\soc(V)$, i.e. the sum of the irreducible submodules of $V$,
is an essential submodule of $V$ of finite length.
\item[(L5)] For $b\in \B$, let 
$A_b := A / \operatorname{Ann}_A(L(b))$, which is a locally unital
algebra with distinguished idempotents $(1_x)_{x \in X}$ that are the images of the ones
in $A$.
Also let $M_b := 
\bigoplus_{x,y\in X} \Hom_\k(L(b) 1_x, L(b)1_y)$, 
viewed as a locally unital algebra with multiplication that is the
opposite of composition. Note that $M_b$ is a contraction of
$M_N(\k)$ where
$N := \{(x, i)\:|\:x\in X, 1 \leq i \leq \dim L(b)1_x\}$.
Then the natural right action of $A$ on $L(b)$ induces a
locally unital isomorphism 
$A_b \stackrel{\sim}{\rightarrow} M_b$.
Hence, $A_b$ is a contraction of a locally unital matrix algebra.
Next, let $J := \bigcap_{b \in \B}
\operatorname{Ann}_A(L(b))$ be the {\em Jacobson radical} of $A$.
The map
$$
A / J \rightarrow \bigoplus_{b \in \B} A_b,\qquad
a+J \mapsto (a + \operatorname{Ann}_A(L(b)))_{b \in \B}
$$
is a well-defined algebra isomorphism.
Hence, $A / J$ is a contraction of a (possibly infinite) direct sum of
locally unital matrix algebras.
It follows
that $A / J$ is {\em semisimple}, i.e. every $A / J$-module is
completely reducible. Moreover, $J$ is the smallest two-sided ideal of $A$ with this property.
\item[(L6)]
For a right $A$-module $V$, its {\em radical} $\operatorname{rad}(V) := VJ$
is the intersection of all of its proper maximal submodules; its {\em head} 
$\operatorname{hd}(V) := V / \operatorname{rad}(V)$
is its largest completely reducible quotient.
Applying $\circledast$ to the statements made in (L4), we deduce that every finitely
generated $A$-module has a projective cover; moreover,
 $V$ is finitely generated if and only if $\operatorname{rad}(V)$ is a
superfluous submodule and $\operatorname{hd}(V)$ is of finite length\footnote{One can give a direct proof of this using the fact that $J$ is
{\em locally nilpotent} in the sense that $eJe$ is a nilpotent ideal
of the finite-dimensional algebra 
$eAe$ for any idempotent $e \in A$.}.

\item[(L7)]
Let $P(b)$
be a projective cover of $L(b)$.
For any right $A$-module $V$,
 the {\em composition multiplicity}
$[V:L(b)]$ is defined as usual to be
the supremum
of the multiplicities $\#\{i=1,\dots,n\:|\:V_i / V_{i-1}\cong L(b)\}$
taken over all filtrations $0 = V_0 < \cdots <V_n=V$ and all $n \in \N$.
By Schur's Lemma, we have that
$[V:L(b)] = 
\dim \Hom_A(P(b), V) \in \N \cup \{\infty\}.$
Noting that $\Hom_A(1_x A, L(b)) \cong L(b) 1_x$, the projective
module $1_x A$ decomposes as
$$
1_x A \cong \bigoplus_{b\in \B} P(b)^{\bigoplus \dim L(b) 1_x}.
$$
All but finitely many summands on the right hand side are zero,
so there are only finitely many $b \in \B$ such that
$L(b) 1_x \neq 0$.
Hence, for any $V$, we have that
$$
\dim V 1_x = \sum_{b \in \B} [V:L(b)] \dim L(b) 1_x.
$$
In particular, we get from this that $V$ is locally finite-dimensional if and only if $[V:L(b)] < \infty$
for all $b \in \B$.
\item[(L8)] 
Given $b \in \B$, we choose $x \in X$ so that $L(b) 1_x \neq 0$.
The decomposition of $1_x A$ derived in (L7) 
implies
that there exists a primitive
idempotent
$1_b \in 1_x A 1_x$ such that $P(b)\cong 1_b A$.
Then $I(b) := (A 1_b)^\circledast$
is an injective hull of $L(b)$. For any $V$, we have that
$[V:L(b)] = \dim \Hom_A(V, I(b)).$
\item[(L9)]
Suppose we are given a family $(A_i)_{i \in I}$ of locally finite-dimensional locally unital algebras, with the distinguished idempotents
in $A_i$ indexed by $X_i$.
Then $A := \bigoplus_{i \in I} A_i$ is a locally finite-dimensonal
locally unital algebra with distinguished idempotents 
indexed by $X := \bigsqcup_{i \in I} X_i$.
Moreover there is an equivalence of categories
$\prod_{i \in I} \rMod A_i \rightarrow \rMod A$
which sends an object $(V_i)_{i \in I}$ 
of
$\prod_{i \in I} \rMod A_i$
to the $A$-module 
$\bigoplus_{i \in I} V_i$.
\item[(L10)]
Suppose 
$\B = \bigsqcup_{i \in I} \B_i$
is a partition
such that
that $\Hom_A(P(b), P(c)) = 0$ for all $b \in \B_i, c \in \B_j$ and $i
\neq j$.
For $x \in X$, 
we can write
$1_x$ uniquely as a sum of mutually orthogonal idempotents $1_x = \sum_{i \in I} 1_{(x,i)}$ 
so that $1_{(x,i)} A \cong \bigoplus_{b \in \B_i} P(b)^{\oplus \dim
  L(b) 1_x}$.
Let $A_i := \bigoplus_{x,y \in X} 1_{(y,i)} A 1_{(x,i)}$, which is
itself a locally unital algebra with idempotents
$(1_{(x,i)})_{x \in X}$ and irreducibles represented by
$\{L(b)\:|\:b \in \B_i\}$.
Then we have that $A = \bigoplus_{i \in I} A_i$. Hence,
$A$ is a contraction of $\bigoplus_{i \in I} A_i$ .
If none of the $\B_i$ can be partitioned any further in this way, we
call this the {\em block decomposition} of $A$, and refer to indecomposable
subalgebras $A_i$ as {\em blocks}.
\end{enumerate}

\subsection{Locally Schurian categories}
Later in the article, we will be interested in categorical
actions on categories of the following form:

\begin{definition}\label{kitty}
We say that a category $\C$ is {\em locally Schurian} if it is
equivalent to $\rMod A$ for some locally finite-dimensional locally
unital algebra $A$.
\end{definition}

Given a locally Schurian category $\C$,
Theorem~\ref{mor} gives a recipe for constructing
a locally finite-dimensional locally unital algebra $A$ such that $\C$ is
equivalent to $\rMod A$: choose a projective generating
family $(P(x))_{x \in X}$ 
for $\C$; set 
\begin{equation}\label{generalone}
A := \bigoplus_{x,y \in X}
\Hom_{\C}(P(x), P(y))
\end{equation} 
viewed as a locally unital algebra with
distinguished
idempotents $(1_x := 1_{P(x)})_{x \in X}$; then the functor
\begin{equation}\label{H}
H := \bigoplus_{x\in X} \Hom_{\C}(P(x), -):\C
\rightarrow \rMod A
\end{equation}
is an equivalence of categories.
Often it is convenient to proceed by choosing representatives
$\{L(b)\:|\:b\in\B\}$
for the isomorphism classes of irreducible object in $\C$
and letting $P(b)$ (resp.\ $I(b)$) be a projective cover (resp.\ an
injective hull) of $L(b)$.
Then we call $(P(b))_{b \in \B}$ a {\em minimal} projective generating family for
$\C$, and 
$\C$ is equivalent to $\rMod B$ where
\begin{equation}\label{basicone}
B := \bigoplus_{b,c \in \B} \Hom_{\C}(P(b), P(c)).
\end{equation}
This a {\em basic} locally unital algebra: the irreducible
$B$-modules are all one dimensional.

An object $V$ in $\C$ is {\em locally finite-dimensional}
if and only if all its composition multiplicities are finite. 
We let $\pC \subseteq \fC \subseteq \dC$ be the full subcategories of $\C$
consisting of finitely generated projective, finitely generated, and
locally finite-dimensional objects;
for $A$ as in (\ref{generalone}),
these are equivalent to the subcategories $\rpMod A,
\rfMod A$ and $\rdMod A$ of $\rMod A$.

We say that $\C$ is {\em Noetherian} if all finitely generated
(resp. cogenerated) objects
satisfy
ACC (resp. DCC); equivalently, the algebra $A$ from (\ref{generalone}) is locally Noetherian.
We say that $\C$ is {\em Artinian} if all finitely generated
(resp. cogenerated)
objects satisfy DCC (resp. ACC); equivalently, the algebra $A$ is
locally Artinian.
We say that $\C$ is {\em finite} if there are only finitely many
isomorphism classes of irreducible object; equivalently, the basic algebra
$B$ from (\ref{basicone}) is finite-dimensional.
By Lemma~\ref{munoz}, Artinian implies Noetherian. 

If $\C$ is Artinian then
$\fC$ is a {\em Schurian category} in the sense of
\cite[$\S$2.1]{BLW}:
it is
Abelian, all objects are of finite length, there are enough projectives and injectives,
and 
the endomorphism
algebras of the irreducible objects are one dimensional.
Moreover, for
$V \in \ob \C$,
the following are equivalent:
\begin{itemize}
\item
$V$ is finitely
generated; 
\item
$V$ is finitely cogenerated; 
\item 
$V$ has finite length.
\end{itemize}

\subsection{Sweet endofunctors}
We will consider categorical actions on locally Schurian categories
involving functors of the following form:

\begin{definition}\label{swe}
Let $\C$ be a locally Schurian category.
We say that an endofunctor $E$ of $\C$
is {\em sweet} if there is an endofunctor $F$
which is biadjoint to $E$.
\end{definition}

Recalling the definition of sweet bimodule from Definition~\ref{ab},
the following theorem gives an algebraic characterization of 
sweet endofunctors.

\begin{theorem}\label{sef}
Let $E$ be an endofunctor of a locally Schurian category
$\C$.
Fix an equivalence $H:\C \rightarrow \rMod A$ as in
(\ref{H}).
Then $E$ is sweet if and only if there is a sweet bimodule $M$ such that
 $H \circ E \cong T_M \circ H$. In that case, we have that
\begin{equation}\label{as}
M \cong \bigoplus_{x,y \in X} \Hom_{\C}(P(x), E P(y)).
\end{equation}
Moreover, $E$ is exact, continuous and cocontinuous, and it preserves the
sets of 
locally finite-dimensional, finitely generated,
finitely cogenerated, projective and injective objects.
\end{theorem}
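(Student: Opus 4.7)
The plan is to transfer everything to $\rMod A$ via the equivalence $H$, reducing to the case $\C = \rMod A$ with $E:\rMod A \to \rMod A$, and then leverage the bimodule language together with Theorem~\ref{name} and Lemma~\ref{rigidity}.

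Suppose first that $E$ is sweet with biadjoint $F$. Since $E$ has a right adjoint, it is right exact and commutes with direct sums, so Theorem~\ref{name} yields an $(A,A)$-bimodule $M$ satisfying (\ref{as}) and $H\circ E \cong T_M \circ H$. Applying Theorem~\ref{name} also to $F$ produces a bimodule $N$ with $H\circ F \cong T_N\circ H$. Because $F$ is right adjoint to $E$, $T_N$ is right adjoint to $T_M$, so $M$ is left rigid with $M^\#\cong N$; because $F$ is also left adjoint to $E$, $T_N$ is left adjoint to $T_M$, so $M$ is right rigid with ${}^\#M\cong N$. Hence $M^\#\cong {}^\#M$, meaning $M$ is sweet. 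Conversely, if $M$ is sweet with $H\circ E\cong T_M\circ H$, then $T_{M^\#}\cong T_{{}^\#M}$ is biadjoint to $T_M$, and transporting through $H$ yields a biadjoint for $E$, so $E$ is sweet.

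For the ``moreover'' clause, having both adjoints immediately gives that $E$ is continuous, cocontinuous, and exact. Moreover the right adjoint $F$, corresponding to the sweet bimodule $M^\#$, is itself exact by the same argument, so $E$ preserves projectives; dually $E$ preserves injectives. Working with $E=T_M$: Lemma~\ref{rigidity}(1) gives $EP(x)=1_xM$ finitely generated projective, and combined with the right exactness of $E$ this shows that $E$ preserves both finitely generated projective and finitely generated objects. For local finite-dimensionality, $(EV)1_y \cong V\otimes_A M1_y$, and Lemma~\ref{rigidity}(2) says the left module $M1_y$ is a direct summand of a finite sum $\bigoplus_i A1_{z_i}$, whence $(EV)1_y$ is a summand of $\bigoplus_i V1_{z_i}$, which is finite-dimensional when $V$ is.

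The main obstacle is the finitely cogenerated case, which I handle by dualizing via $\circledast:\rdMod A \to A\ldMod$. Combining (L4), (L7), (L8) and imitating the characterisation ``fg $\Leftrightarrow$ quotient of a finite $\bigoplus A1_{x_i}$'' on the left module side, a locally finite-dimensional $V$ is finitely cogenerated if and only if $V^\circledast$ is finitely generated as a left $A$-module. Applying tensor-hom adjunction to each $y$-component and using Lemma~\ref{rigidity}(2) to rewrite the resulting Hom in terms of ${}^\#(M1_y)$, I obtain a natural isomorphism of left $A$-modules
\[
 (V\otimes_A M)^\circledast \;\cong\; {}^\#M \otimes_A V^\circledast.
\]
The bimodule ${}^\#M\cong M^\#$ is itself sweet, so Lemma~\ref{rigidity}(2) applied to it makes each ${}^\#M\,1_b$ finitely generated as a left $A$-module, which lets me run the ``finitely generated'' argument on the left-module side to conclude that ${}^\#M\otimes_A V^\circledast$ is finitely generated whenever $V^\circledast$ is. Hence $(EV)^\circledast$ is finitely generated, so $EV$ is finitely cogenerated.
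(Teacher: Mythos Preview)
Your proof is correct, and the equivalence between sweetness of $E$ and the existence of a sweet bimodule $M$ is handled exactly as in the paper, via Theorem~\ref{name} applied to both $E$ and its biadjoint $F$.

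The divergence is in the ``moreover'' clause, specifically for local finite-dimensionality and finite cogeneration. The paper argues uniformly by adjunction rather than by direct bimodule computation: for local finite-dimensionality it simply observes $\dim\Hom_{\C}(P(b),EV)=\dim\Hom_{\C}(FP(b),V)<\infty$ since $FP(b)$ is finitely generated; for finite cogeneration it shows that $EI(b)$ is an injective with finite-length socle by computing $\dim\Hom_{\C}(L(c),EI(b))=\dim\Hom_{\C}(FL(c),I(b))=[FL(c):L(b)]$, which is finite (as $FL(c)$ is locally finite-dimensional) and vanishes for all but finitely many $c$ (as $EP(b)$ is finitely generated). Your route through the duality isomorphism $(V\otimes_A M)^{\circledast}\cong{}^{\#}M\otimes_A V^{\circledast}$ and the characterisation ``$V$ finitely cogenerated $\Leftrightarrow V^{\circledast}$ finitely generated'' is valid, but it costs you extra bookkeeping: naturality of the component isomorphisms as $y$ varies, and the observation that ${}^{\#}M$ is again sweet so that $({}^{\#}M)1_x$ is finitely generated on the left. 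The paper's adjunction argument is shorter and sidesteps the duality isomorphism entirely; your approach, on the other hand, makes the underlying bimodule content fully explicit.
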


\begin{proof}
If $M$ is a sweet bimodule such that $H \circ E \cong T_M
\circ H$, then $E$ is sweet since
$T_M$ is a sweet endofunctor of $\rMod A$
and $H$ is an equivalence.
Conversely, suppose that $E$ possesses a
biadjoint $F$. 
Theorem~\ref{name} shows that
$H \circ E \cong T_M \circ H$ for $M$ as in (\ref{as});
similarly $H \circ F \cong T_N \circ H$ for some bimodule $N$.
Then $T_M$ and $T_N$ are biadjoint. Hence, $N$ is both right and left dual to
$M$, i.e. $M$ is a sweet bimodule.
It follows at once that $E$ and $F$ both 
send finitely generated objects to finitely generated
objects, as $T_M$ and $T_N$ clearly do.

Since $E$ has a biadjoint,
it is exact, continuous and cocontinous. Also $F$ is exact, 
so $E$ preserves projectives and injectives. 
To see that $E$ preserves locally finite-dimensional objects, 
we observe for locally finite-dimensional $V$ that 
$$
\dim \Hom_{\C}(P(b), EV) =
\dim \Hom_{\C}(FP(b), V) < \infty$$ for all $b\in \B$.
Similarly $F$ preserves locally finite-dimensional objects.
Finally, 
since $E$ preserves finitely generated objects, we have that
$$
\dim \Hom_{\C}(E P(b), L(c)) =\dim\Hom_{\C}(P(b), F L(c))
= [F L(c):L(b)]$$
 is zero for all but finitely many $c$.
Hence, $$\dim \Hom_{\C}(L(c), E I(b)) =\dim\Hom_{\C}(F L(c),
I(b))
= [F L(c):L(b)]$$ is zero for all but finitely many $c$.
This implies that $E I(b)$ is a finite direct sum of $I(c)$'s.
Hence, $E$ preserves finitely cogenerated objects, and similarly for $F$.
\end{proof}

\begin{lemma}\label{swee}
Suppose that $F$ and $G$ are sweet endofunctors of a locally Schurian
category $\C$. Let $\eta:F \Rightarrow G$ be a natural
transformation.
If $\eta_L:FL \rightarrow GL$ is an isomorphism 
for each irreducible object $L \in \ob \C$, then $\eta$ is an isomorphism.
\end{lemma}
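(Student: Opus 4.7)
The plan is to use the biadjunction coming from sweetness to transfer the question to one about a map between finitely generated projectives, which can then be tested on heads via Nakayama's lemma.

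First I would apply Yoneda to the projective generating family $(P(b))_{b\in\B}$ of $\C$. The natural transformation $\eta$ is an isomorphism iff $\Hom_{\C}(P(b),\eta_V):\Hom_{\C}(P(b),FV)\to\Hom_{\C}(P(b),GV)$ is a bijection for every $b\in\B$ and every $V\in\ob\C$. By Theorem~\ref{sef}, $F$ and $G$ admit biadjoints $F'$ and $G'$, and the adjunctions $F'\dashv F$ and $G'\dashv G$ identify these two Hom-spaces with $\Hom_{\C}(F'P(b),V)$ and $\Hom_{\C}(G'P(b),V)$. The mate construction then translates postcomposition with $\eta_V$ on the left into precomposition with a natural map $\bar\eta_{P(b)}:G'P(b)\to F'P(b)$ on the right. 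A second appeal to Yoneda, now varying $V$, reduces the problem to showing that $\bar\eta_{P(b)}$ is an isomorphism for every $b\in\B$.

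Next I would verify this by exploiting the structure of finitely generated projectives. Since sweet functors preserve $\pC$ by Theorem~\ref{sef}, both $F'P(b)$ and $G'P(b)$ lie in $\pC$. A map $\psi:Q\to P$ between finitely generated projectives is an isomorphism iff $\operatorname{hd}\psi$ is an isomorphism: if $\operatorname{hd}\psi$ is surjective then $\psi(Q)+\operatorname{rad}P=P$, so by the superfluousness of $\operatorname{rad}P$ in the finitely generated $P$ recorded in (L6) the map $\psi$ is itself surjective; since $P$ is projective this splits as $Q\cong P\oplus\ker\psi$ with $\ker\psi$ finitely generated projective, and then $\operatorname{hd}\psi$ being an isomorphism forces $\operatorname{hd}(\ker\psi)=0$, hence $\ker\psi=0$ by Nakayama. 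Thus it suffices to prove that $\operatorname{hd}\bar\eta_{P(b)}$ is an isomorphism. Both $\operatorname{hd}F'P(b)$ and $\operatorname{hd}G'P(b)$ are semisimple of finite length, so it is enough to check that $\Hom_{\C}(-,L(c))$ takes $\operatorname{hd}\bar\eta_{P(b)}$ to a bijection for every simple $L(c)$; under the mate identification (now at $V=L(c)$) this is precisely the map $\Hom_{\C}(P(b),FL(c))\to\Hom_{\C}(P(b),GL(c))$ given by postcomposition with the hypothesized isomorphism $\eta_{L(c)}$.

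The main thing to get right is the bookkeeping of the mate construction, so that postcomposition on one side of the adjunction corresponds to precomposition on the other and the two Yoneda reductions reassemble to the required natural transformation; beyond this the argument uses only standard radical and Nakayama input from (L6), which is available verbatim in the full locally Schurian generality even when the objects $P(b)$ themselves may have infinite length.
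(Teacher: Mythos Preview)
Your proof is correct and takes a genuinely different route from the paper's. The paper works concretely in $\rMod B$ for a basic algebra $B$: it first shows that $\eta_V$ is an isomorphism for every locally finite-dimensional $V$ by an induction on $\sum_{b\in X}\dim V1_b$ over a certain finite set $X$ (constructed using a left adjoint to $F$), and then bootstraps to arbitrary $V$ via a two-step projective resolution, the cocontinuity of $F$ and $G$, and the Five Lemma. Your argument instead passes to the mate $\bar\eta:G'\Rightarrow F'$ and reduces the entire question to showing that the single map $\bar\eta_{P(b)}$ between finitely generated projectives is an isomorphism, which you then detect on heads via Nakayama and the hypothesis at $V=L(c)$. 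This is cleaner and more conceptual: it avoids both the dimension induction and the resolution/Five Lemma step, and it makes transparent exactly how the biadjunction is used (the left-adjoint half to form the mate and to land in $\pC$). The paper's approach, by contrast, is more hands-on and self-contained for a reader not fluent with mates, and its inductive step isolates the finiteness ingredient (the finiteness of $X$) a little more explicitly.
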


\begin{proof}
We may assume that $\C = \rMod B$ for a basic locally unital
algebra $B$. This means that the irreducible $B$-modules are parametrized by the
same set $\B$ as indexes its distinguished idempotents, 
and $[V:L(b)] = \dim \Hom_B(P(b), V) = \dim V 1_b$ for each $b \in
\B$.
The main step is to prove that $\eta_V:FV \rightarrow GV$ is an
isomorphism for each locally finite-dimensional $B$-module $V$.
Assuming this, the lemma may be deduced as follows: given any
$B$-module $V$, consider
a two-step projective resolution $Q
\rightarrow P \rightarrow V \rightarrow 0$;
since $P$ and $Q$ are direct sums of finitely generated projectives, and
$F$ and $G$ commute with arbitrary direct sums, the locally finite-dimensional result shows that
$\eta_P$ and
$\eta_Q$ are isomorphisms. Hence,
$\eta_V$ is 
an isomorphism too by the Five Lemma.

So now suppose that $V$ is locally finite-dimensional.
It suffices to show for each fixed $a \in \B$ that the restriction of $\eta_V$ defines a linear isomorphism
between 
$(FV) 1_a$ and $(GV) 1_a$.
Let $$
X := \{b \in \B\:|\:(F L(b)) 1_a \neq 0\}=
\{b \in \B\:|\:\Hom_B(P(a), F L(b)) \neq
0\}.
$$
Fixing a left adjoint $E$ to $F$, we have 
that
$X = \{b \in \B\:|\:\Hom_B(E P(a), L(b)) \neq 0\}$.
Since $E P(a)$ is a finitely generated projective, we deduce from this
that $X$ is finite; moreover, $E P(a)$ a direct sum of indecomposable
projectives of the form 
$P(b)$ for $b \in X$. 
Now we proceed by induction on
$n := \sum_{b \in X} \dim V 1_b \in \N$. 
In case $n = 0$, we have that $\Hom_B(P(a), F V) \cong \Hom_B(E P(a),
V) = 0$. Hence, $(FV) 1_a
= 0$; similarly, $(GV) 1_a = 0$. So the desired conclusion that $(FV) 1_a \cong (GV) 1_a$
is trivial.
For the induction step, we take a vector $0 \neq v \in V 1_b$ for some
$b \in X$.
Let $W := v B$ and $W' := \rad(W)$, so that we have a filtration
$0 \leq W' < W \leq V$ with $W / W' \cong L(b)$.
By induction, $\eta_{W'}$ and $\eta_{V / W}$ 
restrict to 
isomorphisms $(F W') 1_a \stackrel{\sim}{\rightarrow} (GW') 1_a$
and $(FV / FW) 1_a \stackrel{\sim}{\rightarrow} (GV / GW) 1_a$.
Also $\eta_{W / W'}$ is an isomorphism as $W / W'$ is irreducible.
Hence, $\eta_V$ defines an isomorphism $(FV)
1_a\stackrel{\sim}{\rightarrow} (GV) 1_a$ as required.
\end{proof}

\subsection{Serre quotients}\label{sq}
Finally in this section, we review briefly the standard notions of {\em Serre subcategory} and {\em
  Serre quotient category} in the setting of locally Schurian
categories.
Let $\C$ be a locally Schurian category with irreducible
objects represented by
$\{L(b)\:|\:b \in \B\}$ as above.
Let $\B'$ be any subset of $\B$ and $\C'$
be the full subcategory of $\C$ consisting of all the objects whose
irreducible subquotients are isomorphic to $L(b)$ for $b \in \B'$. 
It is a Serre subcategory of $\C$, i.e. it is
closed under taking subobjects, quotients and extensions.
Moreover it is itself a locally Schurian category with irreducible
objects represented by $\{L(b)\:|\:b \in \B'\}$. To see this, 
define $B$
according to (\ref{basicone}) so that
$\C$ is equivalent to $\rMod B$.
Then $\C'$ is equivalent to $\rMod B'$ where $B'$ is the
quotient of $B$ by the two-sided ideal generated by the idempotents
$\{1_b\:|\:b \in \B\setminus\B'\}$.
The exact inclusion functor $\iota:\C' \rightarrow \C$
corresponds to the natural inflation functor from $\rMod B'$ to
$\rMod B$, and it has a left adjoint $\iota^!$ (resp.\ a right
adjoint $\iota^*$) which sends an object to its largest quotient
(resp.\ subobject) belonging to $\C'$.
We have that $\iota^! \circ \iota \cong 1_{\C'} \cong \iota^* \circ \iota$.

The {\em Serre quotient}\, $\C /
\C'$ is an Abelian category 
equipped with an exact
{\em quotient functor}
$\pi:\C \rightarrow \C / \C'$ 
satisfying the
following universal property:
if $F:\C \rightarrow \mathcal D$ is any exact functor to an
Abelian category $\mathcal D$ then there is a unique 
functor $\bar F: \C / \C' \rightarrow D$
such that $F = \bar F \circ \pi$.
In fact $\C / \C'$ is another locally Schurian
category 
with irreducibles represented
by $\{\pi L(b)\:|\:b \in \B \setminus \B'\}$.
Again this is easy to see in terms of the algebra $B$: the category
$\C / \C'$ is equivalent to modules over the algebra 
$e B e := \bigoplus_{b, c \in \B \setminus \B'} 1_b B 1_c$. The quotient functor
$\pi$ corresponds to the obvious truncation 
functor $e:\rMod B \rightarrow \rMod eBe$
sending a $B$-module $V$ to $Ve := \bigoplus_{b \in\B \setminus\B'} V
1_b$. Consequently,
$\pi$ has a left adjoint $\pi^!:\C / \C'
\rightarrow \C$ and a right adjoint
$\pi^*:\C / \C'$ corresponding to the functors
$-\otimes_{eBe} eB$ and $\bigoplus_{b \in \B\setminus\B'} \Hom_{eBe}(B 1_b,-)$,
respectively.
We have that $\pi \circ \pi^! \cong 1_{\C / \C'} \cong \pi \circ
\pi^*$.

\begin{lemma}\label{qlem}
In the above setup, assume that we are given $V, W \in \ob \C$ 
such that $V$ is finitely generated, $W$ is finitely cogenerated, and
all constituents of $\head(V)$ and $\soc(W)$ are of the form $L(b)$ for
$b \in \B \setminus \B'$.
Then the functor $\pi$ induces an isomorphism
$$
\Hom_{\C}(V,W) \stackrel{\sim}{\rightarrow}
\Hom_{\C / \C'}(\pi V, \pi W).
$$
\end{lemma}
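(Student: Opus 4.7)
The plan is to use the right adjoint $\pi^*$ of the Serre quotient functor. Under the adjunction isomorphism $\Hom_{\C / \C'}(\pi V, \pi W) \cong \Hom_\C(V, \pi^* \pi W)$, the map in question identifies with the map $\Hom_\C(V, \eta_W) : \Hom_\C(V, W) \to \Hom_\C(V, \pi^* \pi W)$ induced by the unit $\eta_W : W \to \pi^* \pi W$. So it suffices to prove that $\eta_W$ induces an isomorphism on $\Hom_\C(V, -)$. A key initial observation is that, since $\pi \eta_W$ is the identity on $\pi W$ by the triangle identity, both $K := \ker(\eta_W)$ and $C := \operatorname{coker}(\eta_W)$ lie in $\C'$.

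Two steps then finish the proof. First, I would show $K = 0$: if not, then $K$ is a nonzero subobject of $W$, and since $W$ is finitely cogenerated its socle is essential in $W$ by (L4), so $K$ would contain an irreducible summand $L(b)$ of $\soc(W)$ with $b \in \B \setminus \B'$; but $K \in \C'$ forces all simple subobjects of $K$ to be $L(b)$ with $b \in \B'$, a contradiction. Second, I would show $\Hom_\C(V, C) = 0$: the image of any morphism $V \to C$ is simultaneously (i) a finitely generated quotient of $V$, hence by (L6) has a nonzero head whenever it is itself nonzero, with constituents among those of $\head(V)$ and thus of the form $L(b)$ for $b \in \B \setminus \B'$, and (ii) a subobject of $C \in \C'$, so all its composition factors are $L(b)$ with $b \in \B'$; the two conditions together force the image to vanish. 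With these two facts, $\eta_W$ embeds $W$ into $\pi^* \pi W$ with cokernel $C$, and applying the left exact functor $\Hom_\C(V, -)$ to the short exact sequence $0 \to W \to \pi^* \pi W \to C \to 0$ together with $\Hom_\C(V, C) = 0$ delivers the desired isomorphism.

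The main subtlety is keeping straight how the constraint on $V$ (through $\head(V)$) and the constraint on $W$ (through $\soc(W)$) are each used to rule out a piece of $\C'$ on the two sides of $\eta_W$: the head hypothesis on $V$ is invoked to annihilate $\Hom_\C(V, C)$ for $C \in \C'$, while the socle hypothesis on $W$ is invoked to force $\ker(\eta_W) = 0$. Otherwise, everything reduces to routine use of the locally Schurian analogues of classical adjunction and Serre quotient theory already developed in earlier parts of the section.
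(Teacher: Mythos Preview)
Your proof is correct. It is the exact dual of the paper's argument: the paper uses the \emph{left} adjoint $\pi^!$ and the counit $f:\pi^!\pi V \to V$, showing (via the hypothesis on $\head(V)$) that $f$ is an epimorphism with kernel in $\C'$, and then (via the hypothesis on $\soc(W)$) that $\Hom_\C(\ker f, W)=0$, finishing with $\Hom_\C(-,W)$ applied to $0\to\ker f\to\pi^!\pi V\to V\to 0$. You instead use the \emph{right} adjoint $\pi^*$ and the unit $\eta_W:W\to\pi^*\pi W$, with the roles of the two hypotheses swapped accordingly. The two arguments are mirror images of one another and neither buys anything the other does not; your version is written out in slightly more detail than the paper's terse sketch.
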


\begin{proof}
The counit of adjunction defines a morphism
$f: \pi^! \pi V \rightarrow V$. By the assumptions on $V$, $f$ is an epimorphism.
Moreover $f$ becomes an isomorphism on applying $\pi$, hence $\ker f$
belongs to $\C'$.
Using also the assumptions on $W$, we deduce that
$\Hom_{\C / \C'}(\pi V, \pi W) \cong 
\Hom_{\C}(\pi^! \pi V, W) \cong \Hom_\C(V,W)$.
\end{proof}

\section{Kac-Moody 2-categories}

In this section, we review Rouquier's definition of 
Kac-Moody 2-category from \cite{Rou}. Then, following \cite{B}, we explain the relationship
between this and the 2-category introduced by Khovanov and Lauda in
\cite{KL3},
and discuss the graded version.
Note our exposition uses a slightly different normalization for the second
adjunction compared to \cite{B}
based on the idea of \cite{BHLW}.

\subsection{Kac-Moody data}\label{km}
Let $I$ be a finite index set\footnote{With minor adjustments to the
  basic definitions, everything here 
can be extended to infinite $I$ too; type $A_\infty$ is particularly important in
  applications.} and
$A = (-d_{ij})_{i,j \in I}$ be a generalized Cartan matrix,
so $d_{ii} = -2$, $d_{ij} \geq 0$ for $i \neq j$, and $d_{ij} = 0
\Leftrightarrow d_{ji} = 0$. 
We assume that $A$ is {\em symmetrizable}, so that there exist 
positive integers $(d_i)_{i \in I}$ such that $d_i d_{ij} = d_j
d_{ji}$ for all $i,j \in I$.
Pick a finite-dimensional complex vector space $\mathfrak{h}$
and
linearly independent subsets
$\{\alpha_i\:|\:i \in I\}$ and $\{h_i\:|\:i \in I\}$ of
$\mathfrak{h}^*$ and $\mathfrak{h}$, respectively,
such that 
$\langle h_i, \alpha_j\rangle = -d_{ij}$ for all $i,j \in I$.
Let
\begin{align*}
P &:= \{\lambda \in \mathfrak{h}^*\:|\:\langle h_i, \lambda \rangle \in
\Z \text{ for all }i \in I\},&
Q &:= \bigoplus_{i \in I} \Z \alpha_i,\\
P^+ &:= \{\lambda \in \mathfrak{h}^*\:|\:\langle h_i,\lambda \rangle \in
  \N\text{ for all }i \in I\},
&
Q^+ &:= \bigoplus_{i \in I} \N \alpha_i.
\end{align*}
We refer to $P$ and $Q$ as the {\em weight lattice} and the {\em root
  lattice}, respectively.
We view $P$ as an interval-finite poset via the usual {\em dominance ordering}:
$\lambda \leq \mu \Leftrightarrow \mu - \lambda \in Q^+$.

Let $\g$ be the associated {\em Kac-Moody algebra}
with {Cartan subalgebra} $\mathfrak{h}$.
Thus, $\g$ is the Lie algebra generated by 
$\mathfrak{h}$ and elements $e_i, f_i\:(i \in I)$
subject to the usual Serre relations:
for $h,h' \in \mathfrak{h}$ and $i,j \in I$ we have that
\begin{align}\label{weyl}
[h,h'] &= 0,
&[e_i,f_j]&=\delta_{i,j} h_i,\\
[h,e_i] &= \langle h,\alpha_i \rangle e_i,
&
[h,f_i] &= -\langle h,\alpha_i \rangle f_i,\\
(\operatorname{ad}\, e_i)^{d_{ij}+1} (e_j)&= 0,
&
(\operatorname{ad}\, f_i)^{d_{ij}+1} (f_j)&= 0.\label{serre}
\end{align}
Let $U(\g)$ be its {\em universal enveloping algebra}.
Actually it is often more convenient to work with the idempotented version $\dot{U}(\g)$
of $U(\g)$, which is a certain locally unital
algebra with 
distinguished idempotents $(1_\lambda)_{\lambda \in P}$. It is defined by
analogy with \cite[$\S$23.1]{Lubook} (which treats the quantum case).
As well as being an algebra, $\dot U(\g)$ is a $(U(\g),U(\g))$-bimodule with 
$h 1_\lambda = \langle h,\lambda \rangle 1_\lambda = 1_\lambda h$, $e_i
1_\lambda= 1_{\lambda+\alpha_i} e_i$ and $f_i 1_\lambda =
1_{\lambda-\alpha_i} f_i$. The elements $\{1_\lambda, e_i 1_\lambda, f_i
1_\lambda\:|\:i \in I, \lambda \in P\}$
generate $\dot{U}(\g)$
subject to the relations derived from (\ref{weyl})--(\ref{serre}).
Weight modules for $\g$, i.e. $\g$-modules $V$ such that
$V = \bigoplus_{\lambda \in P} V_\lambda$, are just the same as (locally unital)
$\dot{U}(\g)$-modules.

By an {\em upper} (resp.\ {\em lower}) {\em integrable module},
we mean a weight module with finite-dimensional weight spaces
all of whose weights lie in a finite union of sets of the form
$\lambda - Q^+$ (resp. $\lambda+Q^+$). By the classical theory from
\cite[Chapters 9--10]{Kac},
a $\g$-module is upper integrable if and only if it is isomorphic to a
finite direct sum of
the irreducible 
modules\footnote{All of these assertions depend on the assumption that $A$ is
symmetrizable.}
\begin{equation}\label{hi}
L(\kappa) := \dot{U}(\g) 1_\kappa / \langle e_i 1_\kappa , f_i^{1+\langle
  h_i,\kappa\rangle} 1_\kappa \:|\: i \in I \rangle
\end{equation}
for $\kappa \in P^+$;
these are the {\em integrable highest weight modules}.
Similarly, a $\g$-module is lower integrable if and only if it is a
finite direct sum of the (irreducible) {\em integrable lowest weight modules}
\begin{equation}\label{lo}
L'(\kappa')
:= \dot{U}(\g) 1_{\kappa'}  / \langle e_i^{1-\langle h_i,\kappa'\rangle}
1_{\kappa'}, f_i 1_{\kappa'}\:|\:i
\in I \rangle.
\end{equation}
for $\kappa' \in -P^+$.
Generalizing (\ref{hi})--(\ref{lo}), we let
\begin{equation}\label{mid}
L(\kappa'|\kappa) := \dot{U}(\g) 1_{\kappa+\kappa'} / \langle
e_i^{1-\langle h_i, \kappa' \rangle} 1_{\kappa+\kappa'},
f_i^{1+\langle h_i, \kappa \rangle} 1_{\kappa+\kappa'}\:|\:i \in
I\rangle
\end{equation}
for $\kappa \in P^+$ and $\kappa' \in -P^+$.
These modules are not so well studied, but they play an important role
in Lusztig's construction of canonical bases for $\dot U(\g)$
from \cite[Part IV]{Lubook}.
They are integrable modules but they may have infinite-dimensional weight
spaces outside of finite type, so that they are neither upper nor lower integrable.
The next lemma is the classical counterpart of
\cite[Proposition 23.3.6]{Lubook}.

\begin{lemma}\label{count}
There is a $\g$-module isomorphism
$L(\kappa'|\kappa) \stackrel{\sim}{\rightarrow} L(\kappa') \otimes
L(\kappa)$ such that $u \bar 1_{\kappa+\kappa'} \mapsto u(\bar 1_{\kappa'}
\otimes \bar 1_\kappa)$ for $u \in \dot U(\g)$.
\end{lemma}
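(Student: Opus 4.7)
The plan is a three-step program: establish well-definedness of the proposed map $\phi: L(\kappa'|\kappa) \to L(\kappa') \otimes L(\kappa)$ sending $u\bar 1_{\kappa+\kappa'}\mapsto u(\bar 1_{\kappa'}\otimes\bar 1_\kappa)$, prove surjectivity, then prove injectivity. The first two are routine consequences of the Hopf-algebra structure on $U(\g)$; the third is the main obstacle.

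For well-definedness, I would verify that the vector $v_0 := \bar 1_{\kappa'} \otimes \bar 1_\kappa$ satisfies the defining relations of (\ref{mid}). Since each $e_i$ is primitive, the coproduct $\Delta$ satisfies
\[
\Delta(e_i^N) \;=\; \sum_{k=0}^N \binom{N}{k}\,e_i^k \otimes e_i^{N-k},
\]
so $e_i^N v_0 = \sum_k \binom{N}{k}(e_i^k \bar 1_{\kappa'}) \otimes (e_i^{N-k}\bar 1_\kappa)$ with $N = 1-\langle h_i,\kappa'\rangle$. The $k=N$ term is killed by the defining relation of $L(\kappa')$ in (\ref{lo}), while every $k<N$ term is killed by $e_i\bar 1_\kappa=0$ from (\ref{hi}). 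The $f_i^{1+\langle h_i,\kappa\rangle}$ relation is handled symmetrically using $f_i\bar 1_{\kappa'}=0$.

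For surjectivity, I would show $U(\g)\cdot v_0 = L(\kappa')\otimes L(\kappa)$. Applying Leibniz to $Y$ in the subalgebra $U^-$ generated by the $f_i$'s gives $Yv_0 = \bar 1_{\kappa'}\otimes(Y\bar 1_\kappa)$, as every non-trivial coproduct term is killed by $f_i\bar 1_{\kappa'}=0$. This already places $\bar 1_{\kappa'}\otimes L(\kappa)$ inside $U(\g)\cdot v_0$. A straightforward induction on the weight of $v=X\bar 1_{\kappa'}\in L(\kappa')$ (with $X$ a monomial in the $e_i$'s), applied to $\bar 1_{\kappa'}\otimes w$ via $X\cdot(\bar 1_{\kappa'}\otimes w)=\sum(X_{(1)}\bar 1_{\kappa'})\otimes(X_{(2)}w)$, then places every $v\otimes w$ in $U(\g)\cdot v_0$ (the correction terms from non-trivial coproduct splits lie in the piece already covered by the inductive hypothesis).

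Injectivity is the main obstacle. The plan is to exploit the PBW decomposition $\dot U(\g)\cdot 1_{\kappa+\kappa'} \cong U^+\otimes U^-$ (with $U^\pm$ the subalgebras generated by the $e_i$'s and $f_i$'s, and the Cartan absorbed into the scalar action on $1_{\kappa+\kappa'}$). Under this identification, $\phi$ pulls back to
\[
X \otimes Y \;\longmapsto\; XY\cdot v_0 \;=\; \sum_{(X)} (X_{(1)}\bar 1_{\kappa'}) \otimes (X_{(2)}Y\bar 1_\kappa),
\]
whose leading term (with $X_{(1)}=X,\ X_{(2)}=1$) is the obvious surjection $U^+\otimes U^-\twoheadrightarrow L(\kappa')\otimes L(\kappa)$ with kernel $J'\otimes U^- + U^+\otimes J$, where $J' = U^+\cdot\{e_i^{1-\langle h_i,\kappa'\rangle}\}_i$ and $J = U^-\cdot\{f_i^{1+\langle h_i,\kappa\rangle}\}_i$. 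The remaining summands, in which $X_{(2)}$ has positive $U^+$-degree, are lower-order corrections: commuting $X_{(2)}$ past $Y$ using $[e_i,f_j]=\delta_{ij}h_i$ and invoking $X_{(2)}\bar 1_\kappa=0$ produces terms of strictly smaller $U^+$-weight in the first tensor factor. The hard part will be the careful PBW-triangularity book-keeping showing that this lower-order noise cannot enlarge the kernel, so that $\ker\phi$ coincides with the submodule of $\dot U(\g)\cdot 1_{\kappa+\kappa'}$ generated by $e_i^{1-\langle h_i,\kappa'\rangle}\bar 1_{\kappa+\kappa'}$ and $f_i^{1+\langle h_i,\kappa\rangle}\bar 1_{\kappa+\kappa'}$. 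Equivalently, one can repackage the argument as a formal character estimate: PBW combined with the defining relations bounds $\dim L(\kappa'|\kappa)_\mu$ above by $\dim(L(\kappa')\otimes L(\kappa))_\mu$ for each $\mu\in P$, and combined with the surjection $\phi$ this forces an isomorphism.
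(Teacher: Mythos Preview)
The paper does not actually prove this lemma: it is stated without proof and attributed as ``the classical counterpart of \cite[Proposition~23.3.6]{Lubook}.'' So there is no in-paper argument to compare against; Lusztig's quantum proof proceeds via canonical bases, and your plan is an elementary substitute for that machinery.

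Your well-definedness and surjectivity steps are correct and standard. The PBW-triangularity approach to injectivity is also sound, and the ``book-keeping'' you anticipate is not bad once organized correctly. With your identification $M:=\dot U(\g)1_{\kappa+\kappa'}\cong U^+\otimes U^-$, the map $\tilde\phi(X\otimes Y)=(X\bar 1_{\kappa'})\otimes(Y\bar 1_\kappa)+(\text{lower }U^+\text{-height})$ immediately gives linear independence of the images $\{\phi(b^+b^-\bar 1)\}$ where $b^\pm$ run over lifts of bases of $U^\pm/J^\pm$. The missing half is that these same elements \emph{span} $L(\kappa'|\kappa)=M/N$. One inclusion is free: since $f_i^{k_i}\bar 1\in N$, any $XYf_i^{k_i}\bar 1=XY\cdot(f_i^{k_i}\bar 1)$ lies in $N$, so $U^+\otimes J\subseteq N$ and $M/N$ is already spanned by $U^+\otimes\operatorname{span}(B^-)$. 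For the other, take $X'e_i^{k_i'}\in J'$ and $b^-\in B^-$ of $U^-$-height $n$, commute $e_i^{k_i'}$ past $b^-$ to write $X'e_i^{k_i'}b^-\bar 1 = X'b^-\cdot(e_i^{k_i'}\bar 1) + X'[e_i^{k_i'},b^-]\bar 1$; the first term lies in $N$, and the commutator term lands in $U^+\cdot U^-_{<n}\cdot\bar 1$, so an induction on $n$ finishes the spanning. Combining spanning with linear independence of the images gives injectivity of $\phi$.

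One genuine caution about your fallback ``character estimate'': the paper explicitly notes, just above the lemma, that $L(\kappa'|\kappa)$ ``may have infinite-dimensional weight spaces outside of finite type.'' So a bare inequality $\dim L(\kappa'|\kappa)_\mu\leq\dim(L(\kappa')\otimes L(\kappa))_\mu$ is vacuous in general. The filtration argument above avoids this by comparing a concrete spanning set against a concrete linearly independent set, rather than comparing cardinalities.
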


The following lemma about annihilators will be useful later on.

\begin{lemma}\label{faith}
We have that 
\begin{align*}
\displaystyle\bigcap_{\substack{\kappa \in P^+ \\ \kappa' \in -P^+}} \operatorname{Ann}_{\dot
  U(\g)} (L(\kappa'|\kappa)) &= \{0\}.\\\intertext{Moreover if $\g$ is
  of finite type then}
\displaystyle\bigcap_{\lambda \in P^+} \operatorname{Ann}_{\dot
  U(\g)} (L(\lambda)) &= \{0\}.
\end{align*}
\end{lemma}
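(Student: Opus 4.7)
The plan is to reduce the first assertion to proving, for each nonzero $u \in 1_\mu \dot U(\g) 1_\lambda$, the existence of $\kappa \in P^+$ and $\kappa' \in -P^+$ with $\kappa + \kappa' = \lambda$ such that $u \bar 1_\lambda \neq 0$ in $L(\kappa'|\kappa)$. This reduction is legitimate because $L(\kappa'|\kappa)$ is a weight module, so its annihilator respects the bigrading $\dot U(\g) = \bigoplus_{\lambda, \mu} 1_\mu \dot U(\g) 1_\lambda$; and for any such $\lambda$ one can arrange $\kappa$ arbitrarily deep in $P^+$ so that $\kappa' := \lambda - \kappa$ lies in $-P^+$.

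By Lemma~\ref{count}, $u \bar 1_\lambda$ corresponds to $u \cdot (\bar 1_{\kappa'} \otimes \bar 1_\kappa) \in L(\kappa') \otimes L(\kappa)$. Using the PBW triangular decomposition, I write $u = \sum c_{f,e}\, fe\, 1_\lambda$ as a finite sum with $f, e$ running over PBW bases of $U(\mathfrak n^-)$ and $U(\mathfrak n^+)$ respectively. Since $e_i \bar 1_\kappa = 0$, one has $e(\bar 1_{\kappa'} \otimes \bar 1_\kappa) = (e \bar 1_{\kappa'}) \otimes \bar 1_\kappa$, and then the action of $f$ via the Hopf coproduct $\Delta$ of $U(\g)$ yields
$$
u \cdot (\bar 1_{\kappa'} \otimes \bar 1_\kappa) = \sum_{f,e} c_{f,e} \sum_{(f)} (f_{(1)} e \bar 1_{\kappa'}) \otimes (f_{(2)} \bar 1_\kappa).
$$

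To detect nonvanishing I extract the leading left-weight component: let $\eta^* \in Q^+$ be a maximal element of $\{\wt(e) : c_{f,e} \neq 0 \text{ for some } f\}$. The left weight of a generic summand equals $\kappa' + \wt(e) + \wt(f_{(1)}) \leq \kappa' + \eta^*$, with equality forcing $\wt(e) = \eta^*$ and $f_{(1)} = 1$. Hence the $(\kappa' + \eta^*)$-left-weight component of $u \cdot (\bar 1_{\kappa'} \otimes \bar 1_\kappa)$ equals $\sum_{f, e : \wt(e) = \eta^*} c_{f,e}\, (e \bar 1_{\kappa'}) \otimes (f \bar 1_\kappa)$. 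Taking $\kappa$ deep enough in $P^+$ (so that both $\langle h_i, \kappa\rangle$ and $-\langle h_i, \kappa'\rangle$ exceed bounds set by the finite support of $u$), the defining relations $f_i^{1+\langle h_i, \kappa\rangle} \bar 1_\kappa = 0$ and $e_i^{1-\langle h_i,\kappa'\rangle} \bar 1_{\kappa'} = 0$ do not touch the relevant PBW monomials; hence $\{f \bar 1_\kappa\}$ and $\{e \bar 1_{\kappa'}\}$ remain linearly independent, forcing the tensor products $(e \bar 1_{\kappa'}) \otimes (f \bar 1_\kappa)$ to be linearly independent. The extracted component is therefore nonzero by choice of $\eta^*$, proving the first claim.

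For the second assertion, I would invoke complete reducibility of integrable modules in finite type: every integrable $\g$-module decomposes as a direct sum of irreducible highest-weight modules $L(\lambda)$ with $\lambda \in P^+$. In particular $L(\kappa'|\kappa) \cong L(\kappa') \otimes L(\kappa)$ decomposes this way, giving $\bigcap_{\lambda \in P^+} \operatorname{Ann}_{\dot U(\g)}(L(\lambda)) \subseteq \operatorname{Ann}_{\dot U(\g)}(L(\kappa'|\kappa))$ for every pair $\kappa, \kappa'$; intersecting over these pairs and invoking the first claim forces the intersection to vanish. The main obstacle is the leading-weight coproduct computation: one must carefully verify that every ``mixed'' term of $\Delta(f)$ strictly lowers the first tensor weight, and combine this with the classical fact that PBW weight spaces of $U(\mathfrak n^\pm)$ inject into $L(\kappa), L(\kappa')$ once $\kappa, \kappa'$ are chosen sufficiently far into $P^+$ and $-P^+$.
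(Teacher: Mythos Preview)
Your proof is correct and follows essentially the same route as the paper's: both invoke Lemma~\ref{count} to pass to $L'(\kappa')\otimes L(\kappa)$, then reduce to the well-known fact that PBW weight spaces of $U(\mathfrak n^\pm)$ inject into $L(\kappa)$, $L'(\kappa')$ once $\kappa,\kappa'$ are sufficiently deep, and both deduce the finite-type statement from complete reducibility of $L(\kappa'|\kappa)$. The only difference is cosmetic: the paper states the reduction tersely (``reduces using Lemma~\ref{count} to checking that the maps $U^\pm(\g)\to\bigoplus L^{(\prime)}$ are injective'') and cites \cite[Proposition~19.3.7]{Lubook}, whereas you spell out the coproduct computation and the leading-left-weight extraction explicitly.
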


\begin{proof}
The proof of the first statement reduces using Lemma~\ref{count} to
checking that the maps 
\begin{align*}
U^-(\g) \rightarrow \bigoplus_{\kappa \in P^+}
L(\kappa), \qquad
&u \mapsto (u \bar 1_\kappa)_{\kappa \in P^+},\\
U^+(\g) \rightarrow \bigoplus_{\kappa' \in -P^+}
L'(\kappa'), \qquad
&u \mapsto (u \bar 1_{\kappa'})_{\kappa' \in -P^+}
\end{align*}
are injective, where $U^{\pm}(\g)$ are the positive and negative parts
of $U(\g)$ generated by the $e_i$ and $f_i$, respectively.
These are well-known facts; e.g. they can be deduced in a
non-classical way from \cite[Proposition 19.3.7]{Lubook}.
The second statement (which is even better known) follows from the
first since each $L(\kappa'|\kappa)$ is a finite direct sum of
$L(\lambda)$'s in view of Lemma~\ref{count} and complete reducibility.
\end{proof}

\subsection{Strict 2-categories}\label{S2c}
Let $\Cat$ be the category of (small) $\k$-linear categories and
$\k$-linear functors.
It is a monoidal category with tensor functor
$\boxtimes:\Cat \times \Cat \rightarrow \Cat$
defined on objects ($=$ categories) by letting $\C \boxtimes \C'$ be the category with objects
that are pairs $(\lambda,\lambda') \in \ob \C \times \ob \C'$,
morphisms 
$$
\Hom_{\C \boxtimes \C'}((\lambda,\lambda'),
(\mu,\mu')) := 
\Hom_\C(\lambda,\mu) \otimes_\k \Hom_\C(\lambda',\mu'),
$$
and composition law defined by $(g \otimes g') \circ (f \otimes f') := (g \circ f)
\otimes (g' \circ f')$.
The definition of $\boxtimes$ on morphisms ($=$ functors) is obvious:
$F \boxtimes F'$ is the functor that sends $(\lambda,\lambda')\mapsto
(F\lambda, F'\lambda')$ and $g \otimes g' \mapsto F g \otimes F' g'$.

\begin{definition}\label{strict2cat}
A {\em strict 2-category} is a category enriched in 
$\Cat$.
Thus, for objects $\lambda,\mu$ in a strict 2-category
$\CC$,
there is given a
category $\mathcal{H}om_{\CC}(\lambda,\mu)$ of
morphisms from $\lambda$ to $\mu$, whose objects $F,G$
are the {\em 1-morphisms} of $\CC$, and whose morphisms
$x:F \Rightarrow G$ are
the {\em 2-morphisms} of $\CC$.
\end{definition}

For example, $\Cat$ can be viewed as a strict 2-category $\CCat$
with 2-morphisms being natural transformations.

Given a strict 2-category $\CC$, we use the shorthand $\Hom_{\CC}(F,G)$ for the vector space
$\Hom_{\mathcal{H}om_{\CC}(\lambda,\mu)}(F,G)$ of all 2-morphisms $x:F
\Rightarrow G$.
Let us also briefly recall the ``string calculus'' for 2-morphisms in 
$\CC$;
e.g. see \cite[$\S$2]{Lauda}. We represent a 2-morphism
$x \in \Hom_{\CC}(F,G)$
by the picture
$$
\mathord{
\begin{tikzpicture}[baseline = 0]
	\draw[-,thick,darkred] (0.08,-.4) to (0.08,-.13);
	\draw[-,thick,darkred] (0.08,.4) to (0.08,.13);
      \draw[thick,darkred] (0.08,0) circle (4pt);
   \node at (0.08,0) {\color{darkred}$\scriptstyle{x}$};
   \node at (0.08,-.53) {$\scriptstyle{F}$};
   \node at (0.52,0) {$\scriptstyle{\lambda}.$};
   \node at (-0.32,0) {$\scriptstyle{\mu}$};
   \node at (0.08,.53) {$\scriptstyle{G}$};
\end{tikzpicture}
}
$$
The vertical composition $y \circ x$ of $x$ with another 2-morphism
$y \in \Hom_{\CC}(G,H)$ is obtained by vertically stacking
pictures:
$$
\mathord{
\begin{tikzpicture}[baseline = 0]
	\draw[-,thick,darkred] (0.08,-.4) to (0.08,-.13);
	\draw[-,thick,darkred] (0.08,.4) to (0.08,.13);
	\draw[-,thick,darkred] (0.08,.87) to (0.08,.6);
	\draw[-,thick,darkred] (0.08,1.37) to (0.08,1.13);
   \node at (0.08,1.01) {\color{darkred}$\scriptstyle{y}$};
      \draw[thick,darkred] (0.08,1) circle (4pt);
      \draw[thick,darkred] (0.08,0) circle (4pt);
   \node at (0.08,0) {\color{darkred}$\scriptstyle{x}$};
   \node at (0.08,-.53) {$\scriptstyle{F}$};
   \node at (0.62,.5) {$\scriptstyle{\lambda}.$};
   \node at (-0.42,0.5) {$\scriptstyle{\mu}$};
   \node at (0.08,1.5) {$\scriptstyle{H}$};
   \node at (0.08,.5) {$\scriptstyle{G}$};
\end{tikzpicture}
}
$$
Given 2-morphisms 
$x:F \Rightarrow H,
y:G \Rightarrow K$ between 1-morphisms $F, H:\lambda \rightarrow\mu,
G,K:\mu \rightarrow\nu$,
we denote their horizontal composition by $yx:GF\Rightarrow KH$,
and represent it by horizontally stacking pictures:
$$
\mathord{
\begin{tikzpicture}[baseline = 0]
	\draw[-,thick,darkred] (0.08,-.4) to (0.08,-.13);
	\draw[-,thick,darkred] (0.08,.4) to (0.08,.13);
      \draw[thick,darkred] (0.08,0) circle (4pt);
   \node at (0.08,0) {\color{darkred}$\scriptstyle{x}$};
   \node at (0.08,-.53) {$\scriptstyle{F}$};
   \node at (0.58,0) {$\scriptstyle{\lambda}.$};
   \node at (-0.37,0) {$\scriptstyle{\mu}$};
   \node at (0.08,.53) {$\scriptstyle{H}$};
	\draw[-,thick,darkred] (-.8,-.4) to (-.8,-.13);
	\draw[-,thick,darkred] (-.8,.4) to (-.8,.13);
      \draw[thick,darkred] (-.8,0) circle (4pt);
   \node at (-.8,0) {\color{darkred}$\scriptstyle{y}$};
   \node at (-.8,-.53) {$\scriptstyle{G}$};
   \node at (-1.22,0) {$\scriptstyle{\nu}$};
   \node at (-.8,.53) {$\scriptstyle{K}$};
\end{tikzpicture}
}
$$
When confusion seems unlikely, we will use the same notation for a $1$-morphism $F$
as for its identity $2$-morphism. 
With this convention, 
we have that $yH \circ Gx = yx = Kx \circ yF$, or in pictures:
$$
\mathord{
\begin{tikzpicture}[baseline = 0]
   \node at (0.08,-.53) {$\scriptstyle{F}$};
   \node at (0.58,0) {$\scriptstyle{\lambda}.$};
   \node at (-0.37,0) {$\scriptstyle{\mu}$};
   \node at (0.08,.53) {$\scriptstyle{H}$};
   \node at (-.8,-.53) {$\scriptstyle{G}$};
   \node at (-1.22,0) {$\scriptstyle{\nu}$};
   \node at (-.8,.53) {$\scriptstyle{K}$};
	\draw[-,thick,darkred] (0.08,-.4) to (0.08,-.23);
	\draw[-,thick,darkred] (0.08,.4) to (0.08,.03);
      \draw[thick,darkred] (0.08,-0.1) circle (4pt);
   \node at (0.08,-0.1) {\color{darkred}$\scriptstyle{x}$};
	\draw[-,thick,darkred] (-.8,-.4) to (-.8,-.03);
	\draw[-,thick,darkred] (-.8,.4) to (-.8,.23);
      \draw[thick,darkred] (-.8,0.1) circle (4pt);
   \node at (-.8,.1) {\color{darkred}$\scriptstyle{y}$};
\end{tikzpicture}
}
\quad=\quad
\mathord{
\begin{tikzpicture}[baseline = 0]
   \node at (0.08,-.53) {$\scriptstyle{F}$};
   \node at (0.58,0) {$\scriptstyle{\lambda}.$};
   \node at (-0.37,0) {$\scriptstyle{\mu}$};
   \node at (0.08,.53) {$\scriptstyle{H}$};
   \node at (-.8,-.53) {$\scriptstyle{G}$};
   \node at (-1.22,0) {$\scriptstyle{\nu}$};
   \node at (-.8,.53) {$\scriptstyle{K}$};
	\draw[-,thick,darkred] (0.08,-.4) to (0.08,-.13);
	\draw[-,thick,darkred] (0.08,.4) to (0.08,.13);
      \draw[thick,darkred] (0.08,0) circle (4pt);
   \node at (0.08,0) {\color{darkred}$\scriptstyle{x}$};
	\draw[-,thick,darkred] (-.8,-.4) to (-.8,-.13);
	\draw[-,thick,darkred] (-.8,.4) to (-.8,.13);
      \draw[thick,darkred] (-.8,0) circle (4pt);
   \node at (-.8,0) {\color{darkred}$\scriptstyle{y}$};
\end{tikzpicture}
}
\quad=\quad
\mathord{
\begin{tikzpicture}[baseline = 0]
   \node at (0.08,-.53) {$\scriptstyle{F}$};
   \node at (0.58,0) {$\scriptstyle{\lambda}.$};
   \node at (-0.37,0) {$\scriptstyle{\mu}$};
   \node at (0.08,.53) {$\scriptstyle{H}$};
   \node at (-.8,-.53) {$\scriptstyle{G}$};
   \node at (-1.22,0) {$\scriptstyle{\nu}$};
   \node at (-.8,.53) {$\scriptstyle{K}$};
	\draw[-,thick,darkred] (0.08,-.4) to (0.08,-.03);
	\draw[-,thick,darkred] (0.08,.4) to (0.08,.23);
      \draw[thick,darkred] (0.08,0.1) circle (4pt);
   \node at (0.08,0.1) {\color{darkred}$\scriptstyle{x}$};
	\draw[-,thick,darkred] (-.8,-.4) to (-.8,-.23);
	\draw[-,thick,darkred] (-.8,.4) to (-.8,.03);
      \draw[thick,darkred] (-.8,-0.1) circle (4pt);
   \node at (-.8,-.1) {\color{darkred}$\scriptstyle{y}$};
\end{tikzpicture}
}.
$$
This is the {\em interchange law}; it means that diagrams for 2-morphisms are
invariant under rectilinear 
isotopy.

We note that any strict 2-category $\CC$ has an {\em additive envelope}
constructed by taking the additive envelope of each of the morphism
categories in $\CC$.
The {\em additive Karoubi envelope}
$\dot\CC$ is the strict 2-category obtained by taking idempotent
completions after that.
Finally, we define the {\em Grothendieck ring} 
\begin{equation}\label{gring}
K_0(\dot\CC) := \bigoplus_{\lambda,\mu \in \ob \CC}
K_0(\mathcal{H}om_{\dot\CC}(\lambda,\mu)),
\end{equation}
where the latter $K_0$ denotes the usual split Grothendieck group of
an additive category. Horizontal composition induces a multiplication
making $K_0(\dot\CC)$ into a locally unital ring with distinguished
idempotents $(1_\lambda)_{\lambda \in \ob\CC}$.

\subsection{Quiver Hecke categories and the nil Hecke algebra}\label{snow}
The data of a strict monoidal category $\C$ is equivalent to that of a strict 2-category $\CC$ 
with one object;
the objects and morphisms in $\C$ correspond to the 1-morphisms and 2-morphisms in
$\CC$. For strict monoidal categories, we will use the same diagrammatic
formalism as explained in the previous subsection; the only difference is that there no
need to label the regions of the diagrams by objects since there is
only one.

In the next definition, we introduce the {\em quiver Hecke category}, which is ``half'' of the Kac-Moody
2-category $\UU(\g)$ to be defined in the next subsection.
Everything from this point on
depends
on the Kac-Moody data from $\S$\ref{km} plus
some additional parameters: we fix
\begin{itemize}
\item
units $t_{ij} \in \k^\times$ such that 
$t_{ii} = 1$ and
$d_{ij}=0 \Rightarrow t_{ij} = t_{ji}$;
\item
scalars\footnote{In \cite{B} and elsewhere, scalars $s_{ij}^{pq}$ are 
incorporated into the relations also for $p=0$ or $q=0$;
we don't allow so much freedom here because it makes it impossible to prove that
dots are nilpotent in the cyclotomic quotients
discussed below (see Lemma~\ref{nilp}).}
 $s_{ij}^{pq} \in \k$ for $0 < p < d_{ij}$, $0 < q <
d_{ji}$ such that
$s_{ij}^{pq} = s_{ji}^{qp}$.
\end{itemize}

\begin{definition}\label{pd}
The (positive) {\em quiver Hecke category}
$\H$ is the strict
monoidal category
generated by objects $I$ and 
morphisms
$\mathord{
\begin{tikzpicture}[baseline = -2]
	\draw[->,thick,darkred] (0.08,-.15) to (0.08,.3);
      \node at (0.08,0.05) {$\color{darkred}\bullet$};
   \node at (0.08,-.25) {$\scriptstyle{i}$};
\end{tikzpicture}
}:i \rightarrow i$ and
$\mathord{
\begin{tikzpicture}[baseline = -2]
	\draw[->,thick,darkred] (0.18,-.15) to (-0.18,.3);
	\draw[->,thick,darkred] (-0.18,-.15) to (0.18,.3);
   \node at (-0.18,-.25) {$\scriptstyle{i}$};
   \node at (0.18,-.25) {$\scriptstyle{j}$};
\end{tikzpicture}
}:i \otimes j \rightarrow j \otimes i$
subject to the following relations:
\begin{align*}
\mathord{
\begin{tikzpicture}[baseline = 0]
	\draw[->,thick,darkred] (0.28,.4) to[out=90,in=-90] (-0.28,1.1);
	\draw[->,thick,darkred] (-0.28,.4) to[out=90,in=-90] (0.28,1.1);
	\draw[-,thick,darkred] (0.28,-.3) to[out=90,in=-90] (-0.28,.4);
	\draw[-,thick,darkred] (-0.28,-.3) to[out=90,in=-90] (0.28,.4);
  \node at (-0.28,-.4) {$\scriptstyle{i}$};
  \node at (0.28,-.4) {$\scriptstyle{j}$};
\end{tikzpicture}
}
&=
\left\{
\begin{array}{ll}
0&\text{if $i=j$,}\\
t_{ij}\mathord{
\begin{tikzpicture}[baseline = 0]
	\draw[->,thick,darkred] (0.08,-.3) to (0.08,.4);
	\draw[->,thick,darkred] (-0.28,-.3) to (-0.28,.4);
   \node at (-0.28,-.4) {$\scriptstyle{i}$};
   \node at (0.08,-.4) {$\scriptstyle{j}$};
\end{tikzpicture}
}&\text{if $d_{ij}=0$,}\\
 t_{ij}
\mathord{
\begin{tikzpicture}[baseline = 0]
	\draw[->,thick,darkred] (0.08,-.3) to (0.08,.4);
	\draw[->,thick,darkred] (-0.28,-.3) to (-0.28,.4);
   \node at (-0.28,-.4) {$\scriptstyle{i}$};
   \node at (0.08,-.4) {$\scriptstyle{j}$};
      \node at (-0.28,0.05) {$\color{darkred}\bullet$};
      \node at (-0.5,0.2) {$\color{darkred}\scriptstyle{d_{ij}}$};
\end{tikzpicture}
}
+
t_{ji}
\mathord{
\begin{tikzpicture}[baseline = 0]
	\draw[->,thick,darkred] (0.08,-.3) to (0.08,.4);
	\draw[->,thick,darkred] (-0.28,-.3) to (-0.28,.4);
   \node at (-0.28,-.4) {$\scriptstyle{i}$};
   \node at (0.08,-.4) {$\scriptstyle{j}$};
     \node at (0.08,0.05) {$\color{darkred}\bullet$};
     \node at (0.32,0.2) {$\color{darkred}\scriptstyle{d_{ji}}$};
\end{tikzpicture}
}
+\!\! \displaystyle\sum_{\substack{0 < p < d_{ij}\\0 < q <
    d_{ji}}} \!\!\!\!\!s_{ij}^{pq}
\mathord{
\begin{tikzpicture}[baseline = 0]
	\draw[->,thick,darkred] (0.08,-.3) to (0.08,.4);
	\draw[->,thick,darkred] (-0.28,-.3) to (-0.28,.4);
   \node at (-0.28,-.4) {$\scriptstyle{i}$};
   \node at (0.08,-.4) {$\scriptstyle{j}$};
      \node at (-0.28,0.05) {$\color{darkred}\bullet$};
      \node at (0.08,0.05) {$\color{darkred}\bullet$};
      \node at (-0.43,0.2) {$\color{darkred}\scriptstyle{p}$};
      \node at (0.22,0.2) {$\color{darkred}\scriptstyle{q}$};
\end{tikzpicture}
}
&\text{otherwise,}\\
\end{array}
\right. 
\\
\mathord{
\begin{tikzpicture}[baseline = 0]
	\draw[<-,thick,darkred] (0.25,.6) to (-0.25,-.2);
	\draw[->,thick,darkred] (0.25,-.2) to (-0.25,.6);
  \node at (-0.25,-.26) {$\scriptstyle{i}$};
   \node at (0.25,-.26) {$\scriptstyle{j}$};
      \node at (-0.13,-0.02) {$\color{darkred}\bullet$};
\end{tikzpicture}
}
-
\mathord{
\begin{tikzpicture}[baseline = 0]
	\draw[<-,thick,darkred] (0.25,.6) to (-0.25,-.2);
	\draw[->,thick,darkred] (0.25,-.2) to (-0.25,.6);
  \node at (-0.25,-.26) {$\scriptstyle{i}$};
   \node at (0.25,-.26) {$\scriptstyle{j}$};
      \node at (0.13,0.42) {$\color{darkred}\bullet$};
\end{tikzpicture}
}
&=
\delta_{i,j}
\mathord{
\begin{tikzpicture}[baseline = -5]
 	\draw[->,thick,darkred] (0.08,-.3) to (0.08,.4);
	\draw[->,thick,darkred] (-0.28,-.3) to (-0.28,.4);
   \node at (-0.28,-.4) {$\scriptstyle{i}$};
   \node at (0.08,-.4) {$\scriptstyle{j}$};
\end{tikzpicture}
},\\
\mathord{
\begin{tikzpicture}[baseline = 0]
 	\draw[<-,thick,darkred] (0.25,.6) to (-0.25,-.2);
	\draw[->,thick,darkred] (0.25,-.2) to (-0.25,.6);
  \node at (-0.25,-.26) {$\scriptstyle{i}$};
   \node at (0.25,-.26) {$\scriptstyle{j}$};
      \node at (-0.13,0.42) {$\color{darkred}\bullet$};
\end{tikzpicture}
}
-
\mathord{
\begin{tikzpicture}[baseline = 0]
	\draw[<-,thick,darkred] (0.25,.6) to (-0.25,-.2);
	\draw[->,thick,darkred] (0.25,-.2) to (-0.25,.6);
  \node at (-0.25,-.26) {$\scriptstyle{i}$};
   \node at (0.25,-.26) {$\scriptstyle{j}$};
      \node at (0.13,-0.02) {$\color{darkred}\bullet$};
\end{tikzpicture}
}
&=
\delta_{i,j}
\mathord{
\begin{tikzpicture}[baseline = -5]
 	\draw[->,thick,darkred] (0.08,-.3) to (0.08,.4);
	\draw[->,thick,darkred] (-0.28,-.3) to (-0.28,.4);
   \node at (-0.28,-.4) {$\scriptstyle{i}$};
   \node at (0.08,-.4) {$\scriptstyle{j}$};
\end{tikzpicture}
},\\
\mathord{
\begin{tikzpicture}[baseline = 0]
	\draw[<-,thick,darkred] (0.45,.8) to (-0.45,-.4);
	\draw[->,thick,darkred] (0.45,-.4) to (-0.45,.8);
        \draw[-,thick,darkred] (0,-.4) to[out=90,in=-90] (-.45,0.2);
        \draw[->,thick,darkred] (-0.45,0.2) to[out=90,in=-90] (0,0.8);
   \node at (-0.45,-.45) {$\scriptstyle{i}$};
   \node at (0,-.45) {$\scriptstyle{j}$};
  \node at (0.45,-.45) {$\scriptstyle{k}$};
\end{tikzpicture}
}
\!\!-
\!\!\!
\mathord{
\begin{tikzpicture}[baseline = 0]
	\draw[<-,thick,darkred] (0.45,.8) to (-0.45,-.4);
	\draw[->,thick,darkred] (0.45,-.4) to (-0.45,.8);
        \draw[-,thick,darkred] (0,-.4) to[out=90,in=-90] (.45,0.2);
        \draw[->,thick,darkred] (0.45,0.2) to[out=90,in=-90] (0,0.8);
   \node at (-0.45,-.45) {$\scriptstyle{i}$};
   \node at (0,-.45) {$\scriptstyle{j}$};
  \node at (0.45,-.45) {$\scriptstyle{k}$};
\end{tikzpicture}
}
\!&=
\left\{
\begin{array}{ll}
\!\!\displaystyle
\sum_{\substack{r,s \geq 0 \\ r+s=d_{ij}-1}}
\!\!\!\!t_{ij}
\mathord{
\begin{tikzpicture}[baseline = 0]
	\draw[->,thick,darkred] (0.44,-.3) to (0.44,.4);
	\draw[->,thick,darkred] (0.08,-.3) to (0.08,.4);
	\draw[->,thick,darkred] (-0.28,-.3) to (-0.28,.4);
   \node at (-0.28,-.4) {$\scriptstyle{i}$};
   \node at (0.08,-.4) {$\scriptstyle{j}$};
   \node at (0.44,-.4) {$\scriptstyle{k}$};
     \node at (-0.28,0.05) {$\color{darkred}\bullet$};
     \node at (0.44,0.05) {$\color{darkred}\bullet$};
      \node at (-0.43,0.2) {$\color{darkred}\scriptstyle{r}$};
      \node at (0.55,0.2) {$\color{darkred}\scriptstyle{s}$};
\end{tikzpicture}
}
+ \displaystyle
\!\!\sum_{\substack{0 < p < d_{ij}\\0 < q <
    d_{ji}\\r,s \geq 0\\r+s=p-1}}
s_{ij}^{pq}
\mathord{
\begin{tikzpicture}[baseline = 0]
	\draw[->,thick,darkred] (0.44,-.3) to (0.44,.4);
	\draw[->,thick,darkred] (0.08,-.3) to (0.08,.4);
	\draw[->,thick,darkred] (-0.28,-.3) to (-0.28,.4);
   \node at (-0.28,-.4) {$\scriptstyle{i}$};
   \node at (0.08,-.4) {$\scriptstyle{j}$};
   \node at (0.44,-.4) {$\scriptstyle{k}$};
     \node at (-0.28,0.05) {$\color{darkred}\bullet$};
     \node at (0.44,0.05) {$\color{darkred}\bullet$};
      \node at (-0.43,0.2) {$\color{darkred}\scriptstyle{r}$};
     \node at (0.55,0.2) {$\color{darkred}\scriptstyle{s}$};
     \node at (0.08,0.05) {$\color{darkred}\bullet$};
      \node at (0.2,0.2) {$\color{darkred}\scriptstyle{q}$};
\end{tikzpicture}
}
&\text{if $i=k \neq j$,}\\
0&\text{otherwise.}
\end{array}
\right.
\end{align*}
(We depict the $n$th power of $\mathord{
\begin{tikzpicture}[baseline = -2]
	\draw[->,thick,darkred] (0.08,-.15) to (0.08,.3);
      \node at (0.08,0.05) {$\color{darkred}\bullet$};
   \node at (0.08,-.25) {$\scriptstyle{i}$};
\end{tikzpicture}
}$ under vertical composition by 
labelling the dot with $\color{darkred}n$.)
There is also the (negative) {\em quiver Hecke category} $\H'$
generated by objects $I$ and 
morphisms
$\mathord{
\begin{tikzpicture}[baseline = 1]
	\draw[<-,thick,darkred] (0.08,-.15) to (0.08,.3);
      \node at (0.08,0.15) {$\color{darkred}\bullet$};
   \node at (0.08,.4) {$\scriptstyle{i}$};
\end{tikzpicture}
}:i \rightarrow i$ and
$\mathord{
\begin{tikzpicture}[baseline = 0]
	\draw[<-,thick,darkred] (0.18,-.15) to (-0.18,.3);
	\draw[<-,thick,darkred] (-0.18,-.15) to (0.18,.3);
   \node at (-0.18,.4) {$\scriptstyle{j}$};
   \node at (0.18,.4) {$\scriptstyle{i}$};
\end{tikzpicture}
}:i \otimes j \rightarrow j \otimes i$
subject to the relations obtained by reversing the directions of
all the arrows in the above, then switching the order of the terms on the left hand
sides of the second, third and fourth relations. In fact, $\H'$ is isomorphic to $\H$, but the different
normalization of generators is sometimes more convenient.
\end{definition}

For objects $\bi = i_n \otimes\cdots \otimes i_1 \in I^{\otimes n}$ and $\bj = j_m \otimes\cdots
\otimes j_1 \in I^{\otimes m}$, there are no morphisms
$\bi \rightarrow \bj$ in $\H$ 
unless $m=n$.
The endomorphism algebra
\begin{equation}
H_n := \bigoplus_{\bi, \bj \in I^{\otimes n}} \Hom_{\H}(\bi, \bj)
\end{equation}
is the (positive) {\em quiver Hecke algebra}
which was introduced independently in \cite{Rou} and \cite{KL1}.
There is also the negative version
$H_n' := \bigoplus_{\bi, \bj \in I^{\otimes n}} \Hom_{\H'}(\bi, \bj)$, 
which is isomorphic to $H_n$ with a different normalization of generators.

In the special case that $I$ is a singleton, the
quiver Hecke algebra $H_n$ is the {\em nil Hecke algebra} $NH_n$,
which plays a crucial role in the general theory. 
Numbering strands by $1,\dots,n$ from right to left, let us
write $X_i$ for the element of $NH_n$ corresponding to a dot on the
$i$th strand, and $T_i$ for the element corresponding to the crossing of the $i$th and $(i+1)$th
strands. Let $S_n$ be the symmetric group with its usual simple
reflections $s_1,\dots,s_{n-1}$, 
length function $\ell$ and longest element $w_n$.
It acts naturally on the polynomial algebra $\Pol_n :=
\k[X_1,\dots,X_n]$; we write $\Sym_n$ for the subalgebra of invariants.
The following are well known; e.g. see \cite[$\S$2]{KL1},
\cite[$\S$2]{R2} or \cite[$\S$2]{Bsurvey}.
\begin{itemize}
\item[(N1)]
There is a faithful representation of $NH_n$ on $\Pol_n$
defined by $X_i \cdot f := X_i f$ and $T_i \cdot f := 
\frac{s_i(f)-f}{X_{i} - X_{i+1}}$ (the $i$th {\em Demazure operator}).
\item[(N2)] 
For any $w \in S_n$, let $T_w$ be the corresponding element of $NH_n$ defined via a reduced
expression for $w$.  
Then $NH_n$ is a free left $\Pol_n$-module on basis $\{T_w \:|\:w \in S_n\}$. 
In particular, $\Pol_n \hookrightarrow NH_n$.
\item[(N3)] The algebra
$\Pol_n$ is a free $\Sym_n$-module on basis $\{b_w\:|\:w \in S_n\}$
where $b_w := (-1)^{\ell(w)}T_w
\cdot X_1^{n-1} X_2^{n-2} \cdots X_{n-1}$.
Moreover $b_{w_n} = 1$.
\item[(N4)] 
There is an algebra isomorphism $NH_n
  \stackrel{\sim}{\rightarrow} \End_{\Sym_n}(\Pol_n)$
induced by
the action of $NH_n$ on $\Pol_n$.
Hence, $NH_n \cong M_{n!}(\Sym_n)$ and $Z(NH_n) = \Sym_n$.
\item[(N5)] 
  The element $\pi_n := (-1)^{\ell(w_n)} X_1^{n-1} X_2^{n-2} \cdots X_{n-1} T_{w_n}$
acts on the basis for $\Pol_n$ from (N3) by $\pi_n \cdot b_w =
\delta_{w,1} b_w$. Hence, it is a primitive idempotent in $NH_n$,
and $NH_n \cong (NH_n \pi_n)^{\oplus n!}$ as a left $NH_n$-module.
\end{itemize}

In the remainder of the subsection, we wish to give a first indication
of the power of the quiver Hecke relations.
Let $\C$ be some category which is additive and idempotent-complete.
Suppose that we are given a categorical action of the quiver Hecke category $\H$
on $\C$, i.e. there is
a strict monoidal functor
\begin{equation}
\Phi:\H \rightarrow \mathcal{E}\!nd(\C)
\end{equation}
where $\mathcal{E}\!nd(\C)$ denotes the strict monoidal category of all
endofunctors of $\C$.
Let $E_i := \Phi(i)$ and
$x_i := \Phi\big(\mathord{
\begin{tikzpicture}[baseline = -2]
	\draw[->,thick,darkred] (0.08,-.15) to (0.08,.3);
      \node at (0.08,0.05) {$\color{darkred}\bullet$};
   \node at (0.08,-.25) {$\scriptstyle{i}$};
\end{tikzpicture}
}\big) \in \End(E_i)$.
Since $E_i$ is $\k$-linear (as always), it is additive, hence it induces an endomorphism
$e_i := [E_i]$ of the split Grothendieck group $K_0(\C)$.
More generally, for $i \in I$ and $n \geq 1$, 
we can obviously identify the nil Hecke algebra $NH_n$ with $\End_{\mathcal
  H}(i^{\otimes n})$;
then the image under $\Phi$ of the idempotent $\pi_n$ from (N5)
gives us an idempotent $\pi_{i,n} \in \End(E_i^n)$.
Let $E_i^{(n)} := \pi_{i,n} E_i^n$, i.e. it is the endofunctor of $\C$ that sends an object $V$ to the
image of $(\pi_{i,n})_V \in \End_\C(E_i^n V)$, and a morphism $f:V \rightarrow W$ to the
restriction of $E_i^n f$.
The following lemma shows that this categorifies the divided power $e_i^{(n)} := e_i^n / n!$.

\begin{lemma}\label{divpower}
We have that $E_i^n \cong \left(E_i^{(n)}\right)^{\bigoplus n!}$.
\end{lemma}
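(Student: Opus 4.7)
The plan is to transfer the module decomposition (N5) of the nil Hecke algebra $NH_n$ along the functor $\Phi$ and then split the resulting idempotents in $\C$. Identifying $NH_n$ with $\End_{\mathcal H}(i^{\otimes n})$, applying $\Phi$ gives an algebra homomorphism $NH_n \to \End(E_i^n)$ taking $\pi_n$ to $\pi_{i,n}$. So it suffices to produce, inside $NH_n$ itself, a decomposition $1 = \sum_{w \in S_n} e_w$ into mutually orthogonal idempotents with each $e_w$ equivalent (i.e.\ conjugate via a pair of elements whose products in the two orders are $e_w$ and $\pi_n$) to $\pi_n$. Transporting these across $\Phi$ yields mutually orthogonal idempotents in $\End(E_i^n)$ summing to $1$, and, since $\C$ is idempotent-complete, we get $E_i^n \cong \bigoplus_w e_w E_i^n \cong (E_i^{(n)})^{\oplus n!}$.

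First I would construct the idempotents $e_w$ directly from (N3)--(N5). The faithful representation on $\Pol_n$ identifies $NH_n$ with $\End_{\Sym_n}(\Pol_n)$, and (N3) says $\Pol_n = \bigoplus_{w \in S_n} \Sym_n b_w$ as a free $\Sym_n$-module. Let $e_w \in NH_n$ be the endomorphism of $\Pol_n$ which is the identity on the summand $\Sym_n b_w$ and zero on the others; these are mutually orthogonal idempotents summing to $1$. From $\pi_n \cdot b_w = \delta_{w,1} b_w$ we see $\pi_n = e_1$.

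Next I would verify that every $e_w$ is equivalent to $\pi_n = e_1$ in $NH_n$. Define $a_w, b_w^\vee \in NH_n = \End_{\Sym_n}(\Pol_n)$ by $a_w(b_v) := \delta_{v,1} b_w$ and $b_w^\vee(b_v) := \delta_{v,w} b_1 = \delta_{v,w} \cdot 1$. Then $b_w^\vee \circ a_w = \pi_n$ and $a_w \circ b_w^\vee = e_w$, proving $e_w \sim \pi_n$ in the sense that $e_w NH_n \cong \pi_n NH_n$ as right $NH_n$-modules (equivalently as left modules after taking opposites, which recovers (N5)).

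Now I apply $\Phi$. Set $\pi_{i,n}^{(w)} := \Phi(e_w) \in \End(E_i^n)$ and $\alpha_w := \Phi(a_w), \beta_w := \Phi(b_w^\vee)$. Then $\{\pi_{i,n}^{(w)}\}_{w\in S_n}$ is a family of mutually orthogonal idempotents in $\End(E_i^n)$ summing to $\mathrm{id}_{E_i^n}$, with $\pi_{i,n}^{(1)} = \pi_{i,n}$. Since $\C$ is additive and idempotent-complete, the endofunctor category $\mathcal{E}\!nd(\C)$ is too, so each idempotent splits; writing $F_w := \mathrm{im}(\pi_{i,n}^{(w)})$ we obtain $E_i^n \cong \bigoplus_{w \in S_n} F_w$. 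The relations $\beta_w \circ \alpha_w = \pi_{i,n}$ and $\alpha_w \circ \beta_w = \pi_{i,n}^{(w)}$ show that $\alpha_w$ and $\beta_w$ restrict to mutually inverse isomorphisms between $F_w$ and $F_1 = E_i^{(n)}$. Thus $E_i^n \cong (E_i^{(n)})^{\oplus n!}$, as claimed.

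The only nontrivial step is the explicit construction of $e_w$ and its equivalence to $\pi_n$ inside $NH_n$; but this is essentially the content of (N3)--(N5), so the proof is really an exercise in unpacking the Grothendieck group identity $n! \, e_i^{(n)} = e_i^n$ at the categorical level using the standard idempotent splitting principle.
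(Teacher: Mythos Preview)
Your proof is correct and follows the same approach as the paper's one-line argument, which simply cites (N5) to observe that the identity of $NH_n$ is a sum of $n!$ primitive idempotents each conjugate to $\pi_n$; you have just made this decomposition explicit via the matrix-algebra identification (N4). One inconsequential slip: $b_1 = X_1^{n-1}X_2^{n-2}\cdots X_{n-1}$, not $1$ (it is $b_{w_n}$ that equals $1$), but this does not affect the argument.
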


\begin{proof}
By (N5), the identity element of $NH_n$ is a sum of $n!$ primitive
idempotents, each of which is conjugate to $\pi_n$.
\end{proof}

\begin{lemma}\label{proud}
Suppose for some $V \in \ob \C$
that there is a monic polynomial $f(t)$ of degree $n$ such that
$f((x_i)_V) = 0$. 
Then $E_i^{(n+1)} V = 0$.
In particular, if $\C$ is finite-dimensional, then all $e_i$
act locally nilpotently on $K_0(\C)$.
\end{lemma}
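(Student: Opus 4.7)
The plan is to show that the idempotent 2-morphism $(\pi_{i,n+1})_V \in \End_{\C}(E_i^{n+1}V)$ vanishes; since $E_i^{(n+1)}V$ is defined as its image, this will give $E_i^{(n+1)}V=0$. The first observation is that the hypothesis $f((x_i)_V)=0$ yields a polynomial relation for the rightmost dot on $E_i^{n+1}V$: by horizontal composition $X_1|_V = E_i^n((x_i)_V)$, so $f(X_1)|_V = E_i^n(f((x_i)_V)) = 0$. Writing $f(t) = t^n + g(t)$ with $\deg g < n$, we may substitute $X_1^n \equiv -g(X_1)$ in the formula $\pi_{n+1} = (-1)^{\ell(w_{n+1})} X_1^n X_2^{n-1} \cdots X_n T_{w_{n+1}}$ from (N5) to obtain
$$
(\pi_{i,n+1})_V = (-1)^{\ell(w_{n+1})+1}\bigl(g(X_1)\,X_2^{n-1}\cdots X_n\,T_{w_{n+1}}\bigr)_V,
$$
expressing $(\pi_{i,n+1})_V$ as a $\k$-linear combination of terms $\bigl(X_1^k X_2^{n-1}\cdots X_n\,T_{w_{n+1}}\bigr)_V$ with $0\le k<n$.

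The main obstacle is the following identity in $NH_{n+1}$, valid for all $0\le k<n$:
$$
\pi_{n+1}\cdot X_1^k X_2^{n-1}\cdots X_n\cdot T_{w_{n+1}} = 0.
$$
To prove it I use $\pi_{n+1} = (-1)^{\ell(w_{n+1})} X_1^n X_2^{n-1}\cdots X_n T_{w_{n+1}}$ and examine $T_{w_{n+1}}\cdot q\cdot T_{w_{n+1}}$ for $q := X_1^k X_2^{n-1}\cdots X_n$. Iteratively commuting $q$ through the braid via $T_j\cdot p = s_j(p) T_j + \partial_j(p)$ yields an expansion $T_{w_{n+1}}\,q = \sum_{u\in S_{n+1}} a_u(q)\,T_u$, and evaluating both sides on $1\in\Pol_{n+1}$ in the faithful representation (N1) identifies the $u=1$ coefficient as the iterated Demazure operator $\partial_{w_{n+1}}(q)$. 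Multiplying on the right by $T_{w_{n+1}}$ annihilates every term with $u\ne 1$ since $w_{n+1}$ is the longest element, so $T_u T_{w_{n+1}} = 0$ in the nil Hecke algebra. What survives is $\partial_{w_{n+1}}(q)\cdot T_{w_{n+1}}$; but $\deg q = k + \binom{n}{2} < \binom{n+1}{2} = \ell(w_{n+1})$, while $\partial_{w_{n+1}}$ lowers polynomial degree by $\ell(w_{n+1})$, so $\partial_{w_{n+1}}(q) = 0$.

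The lemma is then finished by invoking $(\pi_{i,n+1})_V \circ (\pi_{i,n+1})_V = (\pi_{i,n+1})_V$: composing the first displayed formula on the left with $(\pi_{i,n+1})_V$ and expanding $g(X_1) = \sum_{k<n} g_k X_1^k$ presents $(\pi_{i,n+1})_V$ as a $\k$-linear combination of terms $\bigl(\pi_{n+1}\cdot X_1^k X_2^{n-1}\cdots X_n\,T_{w_{n+1}}\bigr)_V$ with $k<n$, each zero by the identity above. Hence $(\pi_{i,n+1})_V = 0$.

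For the final sentence, if $\C$ is finite-dimensional then $\End_{\C}(E_iV)$ is finite-dimensional, so by Cayley--Hamilton $(x_i)_V$ satisfies a monic polynomial of some degree $n$; applying the main part yields $E_i^{(n+1)}V = 0$, and Lemma~\ref{divpower} upgrades this to $E_i^{n+1}V \cong (E_i^{(n+1)}V)^{\oplus(n+1)!} = 0$, so $e_i^{n+1}[V] = 0$ in $K_0(\C)$.
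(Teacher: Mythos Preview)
Your proof is correct and follows essentially the same route as the paper's. Both arguments hinge on the identity $T_{w_{n+1}}\,X_1^k X_2^{n-1}\cdots X_n\,T_{w_{n+1}}=0$ for $k<n$ in $NH_{n+1}$ (the paper simply says ``by degree considerations''; you spell this out via the Demazure action and the fact that $T_uT_{w_{n+1}}=0$ for $u\neq 1$), and then combine it with $\pi_{n+1}^2=\pi_{n+1}$ and $f(X_1)_V=0$ to conclude $(\pi_{i,n+1})_V=0$. Your final paragraph also makes explicit the appeal to Lemma~\ref{divpower} that the paper leaves implicit.
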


\begin{proof}
The second statement follows from the first: if $\C$ is
finite-dimensional then, for any $V \in \ob \C$, we have that $(x_i)_V$ is an element of the finite-dimensional algebra $\End_\C(E_i V)$. 
Hence, it certainly satisfies some polynomial relation.
To prove the first statement, 
we first note the following identity in $NH_{n+1}$:
$$
(-1)^{\ell(w_{n+1})} \pi_{n+1} f(X_1) X_2^{n-1} \cdots X_n T_{w_{n+1}} = \pi_{n+1}.
$$
This holds because $\pi_{n+1}^2 = \pi_{n+1}$ and moreover $T_{w_{n+1}} X_1^m X_2^{n-1}\cdots X_n
T_{w_{n+1}} = 0$ for $m < n$ by degree considerations.
Now as above we identify $NH_{n+1}$ with 
$\End_{\H}(i^{\otimes(n+1)})$, apply $\Phi$ to our identity, then evaluate the
resulting natural transformations at $V$.
By assumption, $\Phi(f(X_1))_V = E_i^n f((x_i)_V) = 0$. Hence, the left hand side
vanishes,
and we deduce that $\Phi(\pi_{n+1})_V = 0$. 
This is the identity endomorphism of $E_i^{(n+1)} V$, so the latter
object is
isomorphic to zero.
\end{proof}

Perhaps most striking of all, we have the following, which is an
immediate consequence of the even stronger categorical Serre relations
from \cite[Proposition 4.2]{Rou}:

\begin{lemma}\label{catserre}
The endomorphisms $e_i$ of $K_0(\C)$ satisfy the Serre
relations from (\ref{serre}).
\end{lemma}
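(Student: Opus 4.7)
The plan is to deduce the Serre relations at the level of the split Grothendieck group from the stronger \emph{categorical} Serre relations of Rouquier. Fix $i \neq j$ and set $d := d_{ij}$. Rouquier's \cite[Proposition 4.2]{Rou} asserts that the categorical action $\Phi$ forces an isomorphism of endofunctors
$$
\bigoplus_{\substack{0 \leq k \leq d+1 \\ k \text{ even}}} E_i^{(k)}\, E_j\, E_i^{(d+1-k)} \;\cong\; \bigoplus_{\substack{0 \leq k \leq d+1 \\ k \text{ odd}}} E_i^{(k)}\, E_j\, E_i^{(d+1-k)}
$$
in the idempotent-complete additive envelope of $\C$. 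I would invoke this as a black box; its proof, which is the genuine hard step, proceeds by constructing explicit 2-morphisms out of the quiver Hecke generators and demonstrating that a certain two-term complex built from $E_j$ and divided powers of $E_i$ splits.

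Passing to classes in $K_0(\C)$ and using additivity of $[-]$ yields
$$
\sum_{k=0}^{d+1} (-1)^k \bigl[E_i^{(k)}\bigr]\, e_j\, \bigl[E_i^{(d+1-k)}\bigr] = 0.
$$
Next I would apply Lemma~\ref{divpower}, which gives $E_i^n \cong \bigl(E_i^{(n)}\bigr)^{\oplus n!}$, so that $e_i^n = n!\,\bigl[E_i^{(n)}\bigr]$ in $K_0(\C)$ for every $n \geq 0$. In particular, $k!\,(d+1-k)!\,\bigl[E_i^{(k)}\bigr]\, e_j\, \bigl[E_i^{(d+1-k)}\bigr] = e_i^k\, e_j\, e_i^{d+1-k}$. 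Multiplying the displayed identity by $(d+1)!$ therefore produces
$$
\sum_{k=0}^{d+1} (-1)^k \binom{d+1}{k} e_i^k\, e_j\, e_i^{d+1-k} = 0,
$$
which is exactly $(\operatorname{ad} e_i)^{d_{ij}+1}(e_j) = 0$, as required.

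The entire content of the lemma is thus concentrated in Rouquier's categorical Serre relation; the translation into $K_0(\C)$ is purely formal, relying only on the divided power identity of Lemma~\ref{divpower} and a binomial coefficient identity. The main obstacle, which I would not reprove here, is the existence of the categorical splitting above, as the required 2-morphisms must be constructed in the enlarged Kac-Moody 2-category (not just in the quiver Hecke category $\H$), and their verification rests on the full set of relations governing dots, crossings, and the parameters $t_{ij}, s_{ij}^{pq}$.
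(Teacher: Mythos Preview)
Your argument is correct and matches the paper's approach exactly: the paper also deduces the lemma as an immediate consequence of Rouquier's categorical Serre relations \cite[Proposition 4.2]{Rou}, and your spelling-out of the passage to $K_0(\C)$ via Lemma~\ref{divpower} and the binomial identity is the intended (purely formal) step. One small correction to your closing commentary: the required 2-morphisms for the categorical Serre relations live entirely in the quiver Hecke category $\H$ (dots and crossings suffice), not in the larger Kac-Moody 2-category---this is precisely why the lemma applies in the present context, where only an $\H$-action on $\C$ is assumed.
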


\subsection{Kac-Moody 2-categories}\label{ji}
We are ready to formulate Rouquier's definition of the
Kac-Moody 2-category $\UU(\g)$; cf. \cite[$\S$4.1.3]{Rou}.

\begin{definition}\label{def1}
The {\em Kac-Moody 2-category}
 $\UU(\g)$
is the strict 2-category\footnote{Some authors require it is additive
from the outset but we don't assume this.}
with objects $P$;
generating 
1-morphisms 
$E_i 1_\lambda:\lambda \rightarrow \lambda+\alpha_i$ and
$F_i 1_\lambda:\lambda \rightarrow \lambda-\alpha_i$ for each $i \in I$ and
$\lambda \in P$,
whose identity 2-morphisms will be
 represented diagrammatically by
$
{\scriptstyle\lambda+\alpha_i}
\mathord{

}.
\end{align*}
This is the main advantage of the normalization of the second
adjunction from \cite{BHLW} as chosen in
(K2).
\item[(K4)] {\em Infinite Grassmannian relations.}
Let $\Sym$ be the algebra of symmetric functions over $\k$.
Recall $\Sym$ is generated both by the elementary symmetric functions $\e_n\:(n
\geq 1)$ and by the complete symmetric functions $\h_n\:(n \geq 1)$.
Adopting the convention that $\e_0 = \h_0 = 1$ and $\e_n = \h_n = 0$ for $n < 0$,
these two families of generators are related by the equation
\begin{equation}\label{igr}
\sum_{r+s=n}(-1)^r \e_r \h_s = 0
\text{ for all $n > 0$}.
\end{equation}
Take $i \in I$, $\lambda \in P$ and set $h := \langle h_i,\lambda \rangle$.
Then the infinite Grassmannian relations assert that there is a
well-defined
homomorphism
$$
\beta_{\lambda;i}:\Sym \rightarrow
\operatorname{End}_{\UU(\g)}(1_\lambda)
$$
such that 
\begin{align}\label{ee}
\qquad\quad \beta_{\lambda;i}(\h_n)
&
=c_{\lambda;i}^{-1}
\mathord{
\begin{tikzpicture}[baseline = 0]
  \draw[<-,thick,darkred] (0,0.4) to[out=180,in=90] (-.2,0.2);
  \draw[-,thick,darkred] (0.2,0.2) to[out=90,in=0] (0,.4);
 \draw[-,thick,darkred] (-.2,0.2) to[out=-90,in=180] (0,0);
  \draw[-,thick,darkred] (0,0) to[out=0,in=-90] (0.2,0.2);
 \node at (0,-.1) {$\scriptstyle{i}$};
   \node at (0.3,0.2) {$\scriptstyle{\lambda}$};
   \node at (-0.2,0.2) {$\color{darkred}\bullet$};
   \node at (-0.75,0.2) {$\color{darkred}\scriptstyle{n+h-1}$};
\end{tikzpicture}
}&\text{if $n> -h$,}\\\label{hh}
\beta_{\lambda;i}(\e_n)
&=
(-1)^n c_{\lambda;i}\mathord{
\begin{tikzpicture}[baseline = 0]
  \draw[->,thick,darkred] (0.2,0.2) to[out=90,in=0] (0,.4);
  \draw[-,thick,darkred] (0,0.4) to[out=180,in=90] (-.2,0.2);
\draw[-,thick,darkred] (-.2,0.2) to[out=-90,in=180] (0,0);
  \draw[-,thick,darkred] (0,0) to[out=0,in=-90] (0.2,0.2);
 \node at (0,-.08) {$\scriptstyle{i}$};
   \node at (-0.3,0.2) {$\scriptstyle{\lambda}$};
   \node at (0.2,0.2) {$\color{darkred}\bullet$};
   \node at (0.75,0.2) {$\color{darkred}\scriptstyle{n-h-1}$};
\end{tikzpicture}
}&\text{if $n > h$.}
\end{align}
This was proved originally by Lauda\footnote{In fact, Lauda showed for $\g = \mathfrak{sl}_2$ that 
this homomorphism is an isomorphism. In general, 
the product of the homomorphisms $\beta_{\lambda;i}$ over all
$i \in I$ should given an isomorphism $\bigotimes_{i \in I} \Sym
\stackrel{\sim}{\rightarrow}
\End_{\UU(\g)}(1_\lambda)$, but the proof of this assertion 
depends on the Nondegeneracy Condition discussed later in
the subsection.} in \cite[Proposition 8.2]{L}; see \cite[Proposition
5.1]{BE2} where it is established using our normalization.
It motivated Lauda's introduction also of certain {\em negatively dotted
  bubbles}, which are 2-morphisms in $\End_{\UU(\g)}(1_\lambda)$ defined
so that (\ref{ee})--(\ref{hh}) hold for all $n\in\Z$.
\item[(K5)] {\em Dual inversion relations.}
The following 2-morphisms are invertible:
\begin{align*}
\mathord{

}\,
{\scriptstyle\lambda}.
\end{align*}
In the simply-laced case, these were proved in
\cite[Propositions 3.3--3.5]{KL3}.
In general, they are recorded in various places in the literature;
e.g. see \cite[Proposition 2.8]{Web} or \cite[$\S$3.2]{BHLW}.
When $d_{ij} > 1$, the proof of
the final two bubble slides above is not as straightforward as
those references may suggest as it requires also an application of 
the deformed braid relation; we refer to \cite[Proposition
7.3]{BE2} for the detailed argument.
\item[(K8)] {\em Alternating crossings.}
Finally we record the following relations from \cite[Corollary
5.3]{BE2} and \cite[Proposition 7.6]{BE2}; the first two are immediate
from the definitions and the ``diamond relations'' recorded in (K5);
the last one was derived already in \cite{KL3}.
\begin{align*}
\mathord{
\begin{tikzpicture}[baseline = 0]
	\draw[->,thick,darkred] (0.28,0) to[out=90,in=-90] (-0.28,.7);
	\draw[-,thick,darkred] (-0.28,0) to[out=90,in=-90] (0.28,.7);
	\draw[<-,thick,darkred] (0.28,-.7) to[out=90,in=-90] (-0.28,0);
	\draw[-,thick,darkred] (-0.28,-.7) to[out=90,in=-90] (0.28,0);
  \node at (-0.28,-.8) {$\scriptstyle{i}$};
  \node at (0.28,.8) {$\scriptstyle{j}$};
  \node at (.43,0) {$\scriptstyle{\lambda}$};
\end{tikzpicture}
}
&=
(-1)^{\delta_{i,j}}
\mathord{
\begin{tikzpicture}[baseline = 0]
	\draw[<-,thick,darkred] (0.08,-.3) to (0.08,.4);
	\draw[->,thick,darkred] (-0.28,-.3) to (-0.28,.4);
   \node at (-0.28,-.4) {$\scriptstyle{i}$};
   \node at (0.08,.5) {$\scriptstyle{j}$};
   \node at (.3,.05) {$\scriptstyle{\lambda}$};
\end{tikzpicture}
}+\delta_{i,j}\!\!\sum_{n=0}^{\langle h_i, \lambda \rangle-1}
\sum_{r \geq 0}
\mathord{
\begin{tikzpicture}[baseline = 0]
	\draw[-,thick,darkred] (0.3,0.7) to[out=-90, in=0] (0,0.3);
	\draw[->,thick,darkred] (0,0.3) to[out = 180, in = -90] (-0.3,0.7);
    \node at (0.3,.8) {$\scriptstyle{i}$};
    \node at (0.4,-0.32) {$\scriptstyle{\lambda}$};
  \draw[->,thick,darkred] (0.2,0) to[out=90,in=0] (0,0.2);
  \draw[-,thick,darkred] (0,0.2) to[out=180,in=90] (-.2,0);
\draw[-,thick,darkred] (-.2,0) to[out=-90,in=180] (0,-0.2);
  \draw[-,thick,darkred] (0,-0.2) to[out=0,in=-90] (0.2,0);
 \node at (-0.3,0) {$\scriptstyle{i}$};
   \node at (0.2,0) {$\color{darkred}\bullet$};
   \node at (0.9,0) {$\color{darkred}\scriptstyle{-n-r-2}$};
   \node at (-0.25,0.45) {$\color{darkred}\bullet$};
   \node at (-0.45,0.45) {$\color{darkred}\scriptstyle{r}$};
	\draw[<-,thick,darkred] (0.3,-.7) to[out=90, in=0] (0,-0.3);
	\draw[-,thick,darkred] (0,-0.3) to[out = 180, in = 90] (-0.3,-.7);
    \node at (-0.3,-.8) {$\scriptstyle{i}$};
   \node at (-0.25,-0.5) {$\color{darkred}\bullet$};
   \node at (-.4,-.5) {$\color{darkred}\scriptstyle{n}$};
\end{tikzpicture}
},\\
\mathord{
\begin{tikzpicture}[baseline = 0]
	\draw[-,thick,darkred] (0.28,0) to[out=90,in=-90] (-0.28,.7);
	\draw[->,thick,darkred] (-0.28,0) to[out=90,in=-90] (0.28,.7);
	\draw[-,thick,darkred] (0.28,-.7) to[out=90,in=-90] (-0.28,0);
	\draw[<-,thick,darkred] (-0.28,-.7) to[out=90,in=-90] (0.28,0);
  \node at (0.28,-.8) {$\scriptstyle{i}$};
  \node at (-0.28,.8) {$\scriptstyle{j}$};
  \node at (.43,0) {$\scriptstyle{\lambda}$};
\end{tikzpicture}
}
&=
(-1)^{\delta_{i,j}}\mathord{
\begin{tikzpicture}[baseline = 0]
	\draw[->,thick,darkred] (0.08,-.3) to (0.08,.4);
	\draw[<-,thick,darkred] (-0.28,-.3) to (-0.28,.4);
   \node at (-0.28,.5) {$\scriptstyle{j}$};
   \node at (0.08,-.4) {$\scriptstyle{i}$};
   \node at (.3,.05) {$\scriptstyle{\lambda}$};
\end{tikzpicture}
}+\delta_{i,j}\!\!\sum_{n=0}^{-\langle h_i, \lambda \rangle-1}
\sum_{r \geq 0}
\mathord{
\begin{tikzpicture}[baseline=0]
	\draw[-,thick,darkred] (0.3,-0.7) to[out=90, in=0] (0,-0.3);
	\draw[->,thick,darkred] (0,-0.3) to[out = 180, in = 90] (-0.3,-.7);
    \node at (0.3,-.8) {$\scriptstyle{i}$};
   \node at (0.25,-0.5) {$\color{darkred}\bullet$};
   \node at (.4,-.5) {$\color{darkred}\scriptstyle{r}$};
  \draw[<-,thick,darkred] (0,0.2) to[out=180,in=90] (-.2,0);
  \draw[-,thick,darkred] (0.2,0) to[out=90,in=0] (0,.2);
 \draw[-,thick,darkred] (-.2,0) to[out=-90,in=180] (0,-0.2);
  \draw[-,thick,darkred] (0,-0.2) to[out=0,in=-90] (0.2,0);
 \node at (0.25,0.1) {$\scriptstyle{i}$};
   \node at (0.4,-0.2) {$\scriptstyle{\lambda}$};
   \node at (-0.2,0) {$\color{darkred}\bullet$};
   \node at (-0.9,0) {$\color{darkred}\scriptstyle{-n-r-2}$};
	\draw[<-,thick,darkred] (0.3,.7) to[out=-90, in=0] (0,0.3);
	\draw[-,thick,darkred] (0,0.3) to[out = -180, in = -90] (-0.3,.7);
   \node at (0.27,0.5) {$\color{darkred}\bullet$};
   \node at (0.44,0.5) {$\color{darkred}\scriptstyle{n}$};
    \node at (-0.3,.8) {$\scriptstyle{i}$};
\end{tikzpicture}
},\\
\mathord{
\begin{tikzpicture}[baseline = 0]
	\draw[<-,thick,darkred] (0.45,.8) to (-0.45,-.4);
	\draw[->,thick,darkred] (0.45,-.4) to (-0.45,.8);
        \draw[<-,thick,darkred] (0,-.4) to[out=90,in=-90] (-.45,0.2);
        \draw[-,thick,darkred] (-0.45,0.2) to[out=90,in=-90] (0,0.8);
   \node at (-0.45,-.5) {$\scriptstyle{j}$};
   \node at (0,.9) {$\scriptstyle{i}$};
  \node at (0.45,-.5) {$\scriptstyle{k}$};
   \node at (.5,-.1) {$\scriptstyle{\lambda}$};
\end{tikzpicture}
}-
\mathord{
\begin{tikzpicture}[baseline = 0]
	\draw[<-,thick,darkred] (0.45,.8) to (-0.45,-.4);
	\draw[->,thick,darkred] (0.45,-.4) to (-0.45,.8);
        \draw[<-,thick,darkred] (0,-.4) to[out=90,in=-90] (.45,0.2);
        \draw[-,thick,darkred] (0.45,0.2) to[out=90,in=-90] (0,0.8);
   \node at (-0.45,-.5) {$\scriptstyle{j}$};
   \node at (0,.9) {$\scriptstyle{i}$};
  \node at (0.45,-.5) {$\scriptstyle{k}$};
   \node at (.5,-.1) {$\scriptstyle{\lambda}$};
\end{tikzpicture}
}
&=
\delta_{i,j} \delta_{i,k}
\displaystyle\sum_{r,s,t \geq 0}
\left(
\mathord{
\begin{tikzpicture}[baseline = 0]
	\draw[-,thick,darkred] (0.3,0.7) to[out=-90, in=0] (0,0.3);
	\draw[->,thick,darkred] (0,0.3) to[out = 180, in = -90] (-0.3,0.7);
    \node at (0.3,.8) {$\scriptstyle{i}$};
    \node at (0.98,-.8) {$\scriptstyle{i}$};
    \node at (1.4,-0.32) {$\scriptstyle{\lambda}$};
  \draw[->,thick,darkred] (0.2,0) to[out=90,in=0] (0,0.2);
  \draw[-,thick,darkred] (0,0.2) to[out=180,in=90] (-.2,0);
\draw[-,thick,darkred] (-.2,0) to[out=-90,in=180] (0,-0.2);
  \draw[-,thick,darkred] (0,-0.2) to[out=0,in=-90] (0.2,0);
 \node at (-0.3,0) {$\scriptstyle{i}$};
   \node at (0.2,0) {$\color{darkred}\bullet$};
   \node at (.96,0) {$\color{darkred}\scriptstyle{-r-s-t-3}$};
   \node at (-0.23,0.43) {$\color{darkred}\bullet$};
   \node at (-0.43,0.43) {$\color{darkred}\scriptstyle{r}$};
	\draw[<-,thick,darkred] (0.3,-.7) to[out=90, in=0] (0,-0.3);
	\draw[-,thick,darkred] (0,-0.3) to[out = 180, in = 90] (-0.3,-.7);
    \node at (-0.3,-.8) {$\scriptstyle{i}$};
   \node at (-0.25,-0.5) {$\color{darkred}\bullet$};
   \node at (-.4,-.5) {$\color{darkred}\scriptstyle{s}$};
	\draw[->,thick,darkred] (.98,-0.7) to (.98,0.7);
   \node at (0.98,0.4) {$\color{darkred}\bullet$};
   \node at (1.15,0.4) {$\color{darkred}\scriptstyle{t}$};
\end{tikzpicture}
}
+
\mathord{
\begin{tikzpicture}[baseline = 0]
	\draw[<-,thick,darkred] (0.3,0.7) to[out=-90, in=0] (0,0.3);
	\draw[-,thick,darkred] (0,0.3) to[out = 180, in = -90] (-0.3,0.7);
    \node at (-0.3,.8) {$\scriptstyle{i}$};
    \node at (-0.98,-.8) {$\scriptstyle{i}$};
    \node at (.6,-0.32) {$\scriptstyle{\lambda}$};
  \draw[-,thick,darkred] (0.2,0) to[out=90,in=0] (0,0.2);
  \draw[<-,thick,darkred] (0,0.2) to[out=180,in=90] (-.2,0);
\draw[-,thick,darkred] (-.2,0) to[out=-90,in=180] (0,-0.2);
  \draw[-,thick,darkred] (0,-0.2) to[out=0,in=-90] (0.2,0);
 \node at (0.3,0) {$\scriptstyle{i}$};
   \node at (-0.2,0) {$\color{darkred}\bullet$};
   \node at (-1,0) {$\color{darkred}\scriptstyle{-r-s-t-3}$};
   \node at (0.23,0.43) {$\color{darkred}\bullet$};
   \node at (0.43,0.43) {$\color{darkred}\scriptstyle{s}$};
	\draw[-,thick,darkred] (0.3,-.7) to[out=90, in=0] (0,-0.3);
	\draw[->,thick,darkred] (0,-0.3) to[out = 180, in = 90] (-0.3,-.7);
    \node at (0.3,-.8) {$\scriptstyle{i}$};
   \node at (0.25,-0.5) {$\color{darkred}\bullet$};
   \node at (.4,-.5) {$\color{darkred}\scriptstyle{r}$};
	\draw[->,thick,darkred] (-.98,-0.7) to (-.98,0.7);
   \node at (-0.98,0.4) {$\color{darkred}\bullet$};
   \node at (-1.15,0.4) {$\color{darkred}\scriptstyle{t}$};
\end{tikzpicture}
}\right).
\end{align*}
\end{itemize}

\begin{remark}\label{rotated}
A useful consequence of these relations is that $\UU(\g)$ can be 
presented equivalently with
generating 2-morphisms 
$\mathord{
\begin{tikzpicture}[baseline = 1]
	\draw[<-,thick,darkred] (0.08,-.15) to (0.08,.3);
      \node at (0.08,0.13) {$\color{darkred}\bullet$};
   \node at (0.08,.4) {$\scriptstyle{i}$};
\end{tikzpicture}
}
{\scriptstyle\lambda}$,
$\mathord{
\begin{tikzpicture}[baseline = 1]
	\draw[<-,thick,darkred] (0.18,-.15) to (-0.18,.3);
	\draw[<-,thick,darkred] (-0.18,-.15) to (0.18,.3);
   \node at (-0.18,.4) {$\scriptstyle{i}$};
   \node at (0.18,.4) {$\scriptstyle{j}$};
\end{tikzpicture}
}
{\scriptstyle\lambda}$,
$\mathord{
\begin{tikzpicture}[baseline = -2]
	\draw[-,thick,darkred] (0.3,0.2) to[out=-90, in=0] (0.1,-0.1);
	\draw[->,thick,darkred] (0.1,-0.1) to[out = 180, in = -90] (-0.1,0.2);
    \node at (0.3,.3) {$\scriptstyle{i}$};
\end{tikzpicture}
}\!
{\scriptstyle\lambda}$ and
$\mathord{
\begin{tikzpicture}[baseline = -3]
	\draw[-,thick,darkred] (0.3,-0.1) to[out=90, in=0] (0.1,0.2);
	\draw[->,thick,darkred] (0.1,0.2) to[out = 180, in = 90] (-0.1,-0.1);
    \node at (0.3,-.2) {$\scriptstyle{i}$};
\end{tikzpicture}
}\!
{\scriptstyle\lambda}$
subject to
the negative quiver Hecke relations from (K1), the left adjunction
relations from (K2), and
replacing the original inversion relations by the dual inversion
relations from (K5)
where
$$
\mathord{
\begin{tikzpicture}[baseline = 0]
	\draw[->,thick,darkred] (0.28,-.3) to (-0.28,.4);
	\draw[<-,thick,darkred] (-0.28,-.3) to (0.28,.4);
   \node at (0.28,-.4) {$\scriptstyle{i}$};
   \node at (0.28,.5) {$\scriptstyle{j}$};
   \node at (.4,.05) {$\scriptstyle{\lambda}$};
\end{tikzpicture}
}
:=
\mathord{
\begin{tikzpicture}[baseline = 0]
	\draw[<-,thick,darkred] (0.3,-.5) to (-0.3,.5);
	\draw[-,thick,darkred] (-0.2,-.2) to (0.2,.3);
        \draw[-,thick,darkred] (0.2,.3) to[out=50,in=180] (0.5,.5);
        \draw[-,thick,darkred] (0.5,.5) to[out=0,in=90] (0.8,-.5);
        \draw[-,thick,darkred] (-0.2,-.2) to[out=230,in=0] (-0.5,-.5);
        \draw[->,thick,darkred] (-0.5,-.5) to[out=180,in=-90] (-0.8,.5);
  \node at (-0.3,.6) {$\scriptstyle{j}$};
   \node at (0.8,-.6) {$\scriptstyle{i}$};
   \node at (1.05,.05) {$\scriptstyle{\lambda}$};
\end{tikzpicture}
}.
$$
In pictures, this new presentation is the original presentation
from Definition~\ref{def1} rotated
through 180$^\circ$.
\end{remark}

It still seems somewhat remarkable that all of the above relations
(K1)--(K8) can be derived from Rouquier's minimalistic definition. 
Independently, Khovanov
and Lauda \cite{KL3} introduced a strict 2-category
incorporating extra  generators and relations essentially matching the ones
above; see also \cite{CL} which extended the definition in \cite{KL3}
to more general parameters.
The following is a consequence of (K1)--(K8).

\begin{theorem}[{\cite[Main Theorem]{B}}]\label{bt}
Rouquier's Kac-Moody 2-category $\UU(\g)$ 
from Definition~\ref{def1} is isomorphic to the 
2-category introduced in \cite{KL3, CL}.
\end{theorem}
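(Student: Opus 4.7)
The plan is to establish the isomorphism in two stages: first construct a strict 2-functor $\Phi$ from the Khovanov--Lauda--Cautis--Lauda 2-category (call it $\UU_{\mathrm{KL}}(\g)$) to Rouquier's $\UU(\g)$, and then a strict 2-functor $\Psi$ in the opposite direction, and finally verify these are mutually inverse. Both 2-categories have the same object set $P$ and the same generating 1-morphisms $E_i 1_\lambda, F_i 1_\lambda$, so the content is entirely at the level of 2-morphisms.

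\medskip

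For $\Phi:\UU_{\mathrm{KL}}(\g) \to \UU(\g)$, I would send each KL generator to its evident counterpart in Rouquier's $\UU(\g)$: the upward dot and upward crossing are already generators of $\UU(\g)$; the rightward cup and cap go to the right adjunction generators of Definition~\ref{def1}; and the leftward cup and cap (which are not present in Rouquier's generating list) get sent to the composite 2-morphisms defined explicitly in (K2) using the normalizing units $c_{\lambda;i}$. One must then check that every KL defining relation holds in $\UU(\g)$. But this is precisely what the derivations (K1)--(K8) accomplish: the negative quiver Hecke relations from (K1), the left adjunction relations from (K2) (which make the leftward cups/caps genuinely biadjoint data), the pitchfork and cyclicity relations from (K3), the infinite Grassmannian relations from (K4), the curl relations from (K6), the bubble slides from (K7), and the alternating crossings from (K8) together constitute (up to reorganization) the full list of KL relations in the normalization chosen. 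So $\Phi$ is well defined.

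\medskip

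For $\Psi:\UU(\g) \to \UU_{\mathrm{KL}}(\g)$, I would send the upward dot, upward crossing, rightward cup, and rightward cap to themselves (using KL's generators of the same name). The positive quiver Hecke relations and the right adjunction relations hold on the nose inside $\UU_{\mathrm{KL}}(\g)$. The main task is therefore to check that Rouquier's \emph{inversion relations} hold in $\UU_{\mathrm{KL}}(\g)$: namely that the sideways crossings $\mathord{\begin{tikzpicture}[baseline = -2]\draw[<-,thick,darkred] (0.18,-.15) to (-0.18,.3);\draw[->,thick,darkred] (-0.18,-.15) to (0.18,.3);\end{tikzpicture}}$ (defined in Rouquier's setup as the composite through a rightward cup and cap) become isomorphisms in $\UU_{\mathrm{KL}}(\g)$ when suitably augmented. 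This is where I expect the main obstacle to lie, since in KL's presentation the biadjunction between $E_i$ and $F_i$ is part of the data rather than being derived, and one needs to exhibit explicit inverses. The inverses are furnished by applying leftward cups and caps in KL's 2-category, together with the explicit formulas one reads off from (K5)--(K6); the verification that these are two-sided inverses reduces to the pitchfork, bubble slide, and curl relations already built into $\UU_{\mathrm{KL}}(\g)$.

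\medskip

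Once both 2-functors are constructed, checking $\Psi \circ \Phi = \mathrm{id}$ and $\Phi \circ \Psi = \mathrm{id}$ is essentially routine: on Rouquier's generators both compositions are clearly the identity, while on KL's extra generators (leftward cup and cap) one needs to verify that applying $\Psi$ to the formulas in (K2) returns the corresponding KL generators, which follows from the definitions of the leftward cups and caps in $\UU_{\mathrm{KL}}(\g)$ together with the normalizing units $c_{\lambda;i}$ being inserted in a compatible way. The bulk of the proof is the verification in the first stage, and since (K1)--(K8) have already been derived, all the real work has been done; what remains is mostly bookkeeping to match the precise form of the KL relations (with the \cite{BHLW} normalization) against the ones assembled here.
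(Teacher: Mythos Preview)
Your proposal is correct and matches the paper's approach: the paper simply states that the theorem is a consequence of (K1)--(K8) and cites \cite{B} for the details, which is exactly the strategy you outline (build 2-functors both ways on generators, with the relations (K1)--(K8) supplying all the verifications needed for well-definedness and mutual inversion). Your sketch is a faithful expansion of what the paper leaves implicit.
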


Khovanov and Lauda exploited their extra generators and
relations to construct some explicit sets of 2-morphisms\footnote{They
worked with a restricted choice of parameters compared to here, but
it is clear how to extend their constructions to the general case using (K1)--(K8).} 
which they showed span the 2-morphism spaces in $\UU(\g)$;
see \cite[Proposition 3.11]{KL3}.
They then conjectured that these spanning sets actually give 
bases for the 2-morphism spaces in $\UU(\g)$.
This is the {\em Nondegeneracy Condition} from \cite[$\S$3.2.3]{KL3}.
For $\g = \mathfrak{sl}_n$, it is known that
the Nondegeneracy Condition holds, thanks to \cite[$\S$6.4]{KL3}.

To explain the significance of the Nondegeneracy Condition, 
let $\dot\UU(\g)$ be the additive Karoubi envelope of $\UU(\g)$
and $K_0(\dot\UU(\g)) = \bigoplus_{\lambda, \mu \in P} 1_\mu
K_0(\dot\UU(\g)) 1_\lambda$ be its Grothendieck ring as in (\ref{gring}).
Also let $\dot{U}(\g)_\Z$ be the Kostant
$\Z$-form for $\dot{U}(\g)$, i.e. the subring generated by the idempotents
$1_\lambda$ and the divided powers $e_i^{(r)} 1_\lambda, f_i^{(r)}
1_\lambda$ for $\lambda \in P, i \in I$ and $r \geq 1$. 
Then the arguments used to prove \cite[Theorem 1.1]{KL3} (which are based
ultimately on Lemmas~\ref{divpower} and
\ref{catserre}) 
show that there is a unique surjective
locally unital homomorphism
\begin{equation}\label{gamma}
\gamma:\dot{U}(\g)_\Z \twoheadrightarrow K_0(\dot\UU(\g))
\end{equation}
which sends $e_i 1_\lambda$ and $f_i
1_\lambda$ to $[E_i 1_\lambda]$ and $[F_i
1_\lambda]$, respectively.
Moreover, {\em assuming the parameters satisfy the homogeneity property}
\begin{equation}\label{hc}
s_{ij}^{pq} \neq 0 
 \Rightarrow 
p d_{ji} + q d_{ij} = d_{ij}d_{ji}
\end{equation}
and
{\em providing that the Nondegeneracy
Condition holds}, \cite[Theorem 1.2]{KL3} implies that
$\gamma$ is an isomorphism.
This makes precise the sense in which $\UU(\g)$ should categorify the
universal enveloping algebra of $\g$.

\begin{remark}
In finite type, it is known that $\gamma$ is an isomorphism
(regardless of whether (\ref{hc}) holds); see Corollary~\ref{gr} below.
\end{remark}

\begin{remark}\label{dr}
As we were finalizing this article, Webster released a preliminary
version of \cite{erratum}. In this work, he
appears to have found a general proof of the Nondegeneracy Condition
valid for all types and all choices of parameters satisfying (\ref{hc}). 
\end{remark}

\subsection{Gradings}
In this subsection, we discuss the graded version 
$\UU_q(\g)$ of
$\UU(\g)$ and its connection to quantum groups.
Our language is based on \cite[$\S$6]{BE}, and is
slightly different to that of
\cite{KL3, Rou}.

Let $\GVec$ be the symmetric monoidal category of (small) $\Z$-graded vector spaces and
degree-preserving linear maps. The 
grading shift functor gives an automorphism
$$
Q:\GVec \rightarrow \GVec,
$$ 
our convention for this being that $(QV)_n = V_{n-1}$.
By a {\em graded category},
we mean a
category enriched in $\GVec$.
Thus, the morphism spaces in a graded category are equipped with
a $\Z$-grading in a way that
is compatible with composition. 
If $\C$ is any graded category, the {\em underlying category}
$\underline{\C}$ is the category with the same objects as $\C$, but only the
homogeneous morphisms of degree zero.
For example, $\GVec$ is the underlying category of the graded category
$\mathcal{GV}ec$ whose objects are (small) $\Z$-graded vector spaces and whose
morphisms are sums of homogeneous linear maps of various degrees.

Let $\GCat$ be the category of all (small)
graded categories. It is monoidal with product
$\boxtimes$ defined just like in $\S$\ref{S2c}. 
A {\em strict graded 2-category} is a category enriched in $\GCat$;
cf. Definition~\ref{strict2cat}.
If $\CC$ is any graded 2-category, the {\em underlying
  2-category} $\underline{\CC}$
is the 2-category with the same
objects and 1-morphisms as $\CC$, but only the homogeneous 2-morphisms
of degree zero. 
Also let $\dot{\underline{\CC}}$ be the 
additive Karoubi envelope of $\underline{\CC}$,
and $K_0(\dot{\underline{\CC}})$ be its Grothendieck ring defined as in
(\ref{gring}).

For any graded 2-category $\CC$, there is a universal
construction of another graded 2-category $\CC_q$, which we call the
{\em $Q$-envelope} of $\CC$.
It has the same object set as $\CC$. Given objects $\lambda,\mu$, the
1-morphisms
$\lambda \rightarrow \mu$ in $\CC_q$ are 
defined formally to be symbols $Q^n F$ for all 1-morphisms
$F:\lambda\rightarrow\mu$ in $\CC$ and all $n \in \Z$.
Then the 2-morphisms in $\CC_q$ are defined from
$$
\Hom_{\CC_q}(Q^n F, Q^m G) := Q^{m-n} \Hom_{\CC}(F, G),
$$
where the $Q^{m-n}$ on the right hand side is the grading shift in
$\GVec$.
Horizontal composition of 1-morphisms in $\CC_q$ is induced 
by the horizontal composition in $\CC$ so that
$(Q^n F) (Q^m G) := Q^{n+m} F G$.
Similarly, the horizontal and vertical compositions of 2-morphisms in
$\CC_q$ are induced in an obvious way 
by the horizontal and vertical compositions in $\CC$.
Note also for each 1-morphism $F$ of $\CC$ and $n \in \Z$
that $1_F$ defines a canonical 2-isomorphism $Q^n F
\stackrel{\sim}{\rightarrow} F$ in $\CC_q$ that is homogeneous of degree $-n$.

The point of the construction of $Q$-envelope 
is that $\CC_q$ (hence, the underlying 2-category $\underline{\CC}_q$)
is equipped with 
distinguished 1-morphisms $q_\lambda := Q 1_\lambda:\lambda\rightarrow
\lambda$ for each object
$\lambda$, 
such that $q_\mu F = F q_\lambda$ for each 1-morphism
$F:\lambda\rightarrow \mu$.
In particular, the Grothendieck ring $K_0(\dot{\underline{\CC}}_q)$ is actually a
$\Z[q,q^{-1}]$-algebra, with $q$ acting on $1_\mu K_0(\dot{\underline{\CC}}_q)
1_\lambda$ by left multiplication by $[q_\mu]$ ($=$ right
multiplication by $[q_\lambda]$).

\begin{definition}
Assume that the parameters fixed in
$\S$\ref{snow} satisfy (\ref{hc}).
Then the relations defining the Kac-Moody 2-category $\UU(\g)$
are all homogeneous, so we can make $\UU(\g)$ into a graded 2-category by declaring that the generating
2-morphisms
$
\mathord{
\begin{tikzpicture}[baseline = -2]
	\draw[->,thick,darkred] (0.08,-.15) to (0.08,.3);
      \node at (0.08,0.05) {$\color{darkred}\bullet$};
   \node at (0.08,-.25) {$\scriptstyle{i}$};
\end{tikzpicture}
}
{\scriptstyle\lambda}
$,
$\mathord{
\begin{tikzpicture}[baseline = -2]
	\draw[->,thick,darkred] (0.18,-.15) to (-0.18,.3);
	\draw[->,thick,darkred] (-0.18,-.15) to (0.18,.3);
   \node at (-0.18,-.25) {$\scriptstyle{i}$};
   \node at (0.18,-.25) {$\scriptstyle{j}$};
\end{tikzpicture}
}
{\scriptstyle\lambda}
$,
$\mathord{
\begin{tikzpicture}[baseline = 0]
	\draw[<-,thick,darkred] (0.3,0.2) to[out=-90, in=0] (0.1,-0.1);
	\draw[-,thick,darkred] (0.1,-0.1) to[out = 180, in = -90] (-0.1,0.2);
    \node at (-0.1,.3) {$\scriptstyle{i}$};
\end{tikzpicture}
}\,
{\scriptstyle\lambda}
$ and
$\mathord{
\begin{tikzpicture}[baseline = -2]
	\draw[<-,thick,darkred] (0.3,-0.1) to[out=90, in=0] (0.1,0.2);
	\draw[-,thick,darkred] (0.1,0.2) to[out = 180, in = 90] (-0.1,-0.1);
    \node at (-0.1,-.2) {$\scriptstyle{i}$};
\end{tikzpicture}
}
\,{\scriptstyle\lambda}$
are of degrees $2d_i$, $d_i d_{ij}$, $d_i(1+\langle
h_i,\lambda\rangle)$ and $d_i(1-\langle h_i,\lambda\rangle)$,
respectively.
Then we define the {\em graded Kac-Moody 2-category} $\UU_q(\g)$ to be
the $Q$-envelope of this graded 2-category.
\end{definition}

Using Theorem~\ref{bt}, it is easy to see that the underlying
2-category $\underline{\UU}_q(\g)$ is isomorphic to the graded version
of the Kac-Moody 2-category as defined in \cite{Rou, KL3}.
Then \cite[Theorem 1.1]{KL3}
shows that there is a surjective $\Z[q,q^{-1}]$-algebra homomorphism
\begin{equation}\label{gammaq}
\gamma_q:
\dot U_q(\g)_\Z
\twoheadrightarrow K_0(\dot{\underline{\UU}}_q(\g)),
\end{equation}
where $\dot U_q(\g)_\Z$ denotes the Kostant-Lusztig $\Z$-form of the
idempotented version of the quantized enveloping algebra $U_q(\g)$.
Moreover, {\em providing that the Nondegeneracy Condition holds}, 
\cite[Theorem 1.2]{KL3} shows 
that $\gamma_q$ is an isomorphism.
The injectivity of $\gamma$ claimed earlier in the
ungraded setting follows from this on specializing
$q$ to $1$.

\section{Categorical actions and crystals}

In this final section, we focus on 2-representations of Kac-Moody
2-categories. We will review the existing results mostly
following Rouquier \cite{R2}.
After that, we focus on the locally Schurian
case, explaining various results 
in that setting which generalize aspects of \cite{CR, Rou}.

\subsection{2-representations}
We keep the choices of Kac-Moody data and parameters as in the previous section.
The following is \cite[Definition 5.1.1]{Rou}.

\begin{definition}\label{2rep}
A {\em 2-representation} of $\UU(\g)$
is the following data:
\begin{itemize}
\item[(M1)]
a category $\RR$ with a given decomposition
$\RR = \coprod_{\lambda \in P} \RR_\lambda$ into {\em weight subcategories};
\item[(M2)]
endofunctors $E_i$ and $F_i$ of $\RR$ for each $i \in I$,
such that $E_i|_{\RR_\lambda}:\RR_\la \rightarrow \RR_{\la+\alpha_i}$
and $F_i|_{\RR_\la}:\RR_\la \rightarrow \RR_{\la-\alpha_i}$;
\item[(M3)]
natural transformations $x_i:E_i \rightarrow E_i$
and $\tau_{ij}:E_i E_j \rightarrow E_j E_i$
satisfying the positive quiver Hecke relations from Definition~\ref{pd}, i.e. so that
there is a
strict monoidal functor $\Phi:\H \rightarrow \mathcal{E}\!nd(\RR)$ with
$\Phi(i) = E_i$,
$\Phi\big(\mathord{
\begin{tikzpicture}[baseline = -2]
	\draw[->,thick,darkred] (0.08,-.15) to (0.08,.3);
      \node at (0.08,0.05) {$\color{darkred}\bullet$};
   \node at (0.08,-.25) {$\scriptstyle{i}$};
\end{tikzpicture}
}\big) = x_i$ and $\Phi\big(\mathord{
\begin{tikzpicture}[baseline = -2]
	\draw[->,thick,darkred] (0.18,-.15) to (-0.18,.3);
	\draw[->,thick,darkred] (-0.18,-.15) to (0.18,.3);
   \node at (-0.18,-.25) {$\scriptstyle{i}$};
   \node at (0.18,-.25) {$\scriptstyle{j}$};
\end{tikzpicture}
}\big) = \tau_{ij}$
for all $i,j \in I$;
\item[(M4)]
the unit $\eta_i:1_\RR \rightarrow F_i E_i$ and counit $\eps_i:E_i F_i
\rightarrow 1_\RR$ of an adjunction making $F_i$ into a right adjoint
of $E_i$.
\end{itemize}
Then we require that the following axiom holds:
\begin{itemize}
\item[(M5)]
all of the natural transformations $\sigma_{ij}\:(i \neq j)$
and $\rho_{i,\la}$ are invertible, where
\begin{align*}
\sigma_{ij} &:= F_i E_j \eps_i \circ F_i \tau_{ij} F_i  \circ \eta_i
E_j F_i:E_j F_i \rightarrow F_i E_j,\\
\rho_{i,\la} &:=
\left\{
\begin{array}{ll}
\displaystyle\sigma_{ii} \oplus \bigoplus_{n=0}^{\hhh-1} \eps_i
\circ (x_i F_i)^{\circ n}:E_i
F_i|_{\RR_\lambda} \rightarrow F_i E_i|_{\RR_\lambda} \oplus 1_{\RR_\lambda}^{\oplus
  \hhh}
&\text{if $\hhh \geq 0$,}\\
\displaystyle\sigma_{ii} \oplus \bigoplus_{m=0}^{-\hhh-1}
(F_i x_i)^{\circ m}\circ \eta_i:E_i F_i|_{\RR_\la} \oplus 1_{\RR_\lambda}^{\oplus
  -\hhh} \rightarrow F_i E_i|_{\RR_\la}
&\text{if $\hhh \leq 0$,}
\end{array}\right.
\end{align*}
for $h := \langle h_i, \lambda
\rangle$.
\end{itemize}
\end{definition}

We say that a 2-representation $\RR$ is {\em small}, {\em finite-dimensional}, {\em additive},
{\em Abelian} etc... if all of the categories $\RR_\lambda\:(\lambda
\in P)$ are small, finite-dimensional, additive, Abelian etc...
In the additive case, the functors $E_i$ and $F_i$ extend 
to $\bigoplus_{\lambda \in P} \RR_\lambda$, and it is more convenient to
denote this by $\RR$ in place of $\coprod_{\lambda \in P} \RR_\lambda$.
If $\RR$ is not additive, one can always replace it by its 
additive envelope, or indeed its additive Karoubi envelope $\dot\RR$; the 
endofunctors $E_i$ and $F_i$ extend canonically to make these into
2-representations too.

The point of Definition~\ref{2rep}
is that a small 2-representation $\RR$ is exactly the same data as a
strict 2-functor $\mathbb{R}:\UU(\g) \rightarrow \CCat$.
The dictionary for going between the two notions is given by
$\RR_\lambda = \mathbb{R}(\lambda)$,
$E_i|_{\RR_\lambda} = \mathbb{R}(E_i 1_\lambda)$,
$F_i|_{\RR_\lambda} = \mathbb{R}(F_i 1_\lambda)$,
$x_i|_{E_i|_{\RR_\lambda}} = \mathbb{R}\big(
\mathord{
\begin{tikzpicture}[baseline = -2]
	\draw[->,thick,darkred] (0.08,-.15) to (0.08,.3);
      \node at (0.08,0.05) {$\color{darkred}\bullet$};
   \node at (0.08,-.25) {$\scriptstyle{i}$};
\end{tikzpicture}
}
{\scriptstyle\lambda}
\big)$,
$
\tau_{ij}|_{E_i E_j|_{\RR_\lambda}}
= 
\mathbb{R}\big(\mathord{
\begin{tikzpicture}[baseline = -2]
	\draw[->,thick,darkred] (0.18,-.15) to (-0.18,.3);
	\draw[->,thick,darkred] (-0.18,-.15) to (0.18,.3);
   \node at (-0.18,-.25) {$\scriptstyle{i}$};
   \node at (0.18,-.25) {$\scriptstyle{j}$};
\end{tikzpicture}
}
{\scriptstyle\lambda}\big)$,
$\eta_i|_{1_{\RR_\lambda}}
= \mathbb{R}\big(\mathord{
\begin{tikzpicture}[baseline = 0]
	\draw[<-,thick,darkred] (0.3,0.2) to[out=-90, in=0] (0.1,-0.1);
	\draw[-,thick,darkred] (0.1,-0.1) to[out = 180, in = -90] (-0.1,0.2);
    \node at (-0.1,.3) {$\scriptstyle{i}$};
\end{tikzpicture}
}\,
{\scriptstyle\lambda}\big)$
and
$\eps_i|_{F_i E_i|_{\RR_\lambda}}
= \mathbb{R}\big(\mathord{
\begin{tikzpicture}[baseline = -2]
	\draw[<-,thick,darkred] (0.3,-0.1) to[out=90, in=0] (0.1,0.2);
	\draw[-,thick,darkred] (0.1,0.2) to[out = 180, in = 90] (-0.1,-0.1);
    \node at (-0.1,-.2) {$\scriptstyle{i}$};
\end{tikzpicture}
}
\,{\scriptstyle\lambda}\big)$.

The following strengthens \cite[Theorem 5.16]{Rou}.

\begin{lemma}\label{nec}
Suppose that $\RR$ is a 2-representation in the sense of Definition~\ref{2rep}.
Then there is a canonical
choice for the unit
$\eta_i':1_\RR \rightarrow E_i F_i$ and counit 
$\eps_i':F_i E_i \rightarrow 1_\RR$
of an adjunction making $F_i$ into a
left adjoint of $E_i$.
\end{lemma}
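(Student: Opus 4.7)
The plan is to exploit the fact that, as noted immediately after Definition~\ref{2rep}, the data of a (small) 2-representation $\RR$ is equivalent to that of a strict 2-functor $\mathbb{R}:\UU(\g)\to\CCat$: axioms (M1)--(M5) match exactly the defining relations of $\UU(\g)$ from Definition~\ref{def1}. Hence, whatever canonical construction of a left adjunction exists in $\UU(\g)$ itself transports through $\mathbb{R}$ to give the desired adjunction data in $\RR$. This is the point: in $\UU(\g)$ we already constructed a canonical left adjunction unit and counit in (K2), depending on a choice of units $c_{\lambda;i}\in\k^\times$ satisfying $c_{\lambda+\alpha_j;i}=t_{ij}c_{\lambda;i}$, and we just apply the 2-functor.

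First I would fix such scalars $c_{\lambda;i}$, noting that they can be chosen freely after picking one value in each coset of $Q$ in $P$. Then, following the formulas in (K2), I would define
\begin{align*}
\eta_i'|_{\RR_\lambda} &:= c_{\lambda;i}\cdot\mathbb{R}\bigl(\text{\raisebox{-0.05cm}{$\mathord{\begin{tikzpicture}[baseline=0]\draw[-,thick,darkred] (0.3,0.3) to[out=-90,in=0] (0.05,0);\draw[->,thick,darkred] (0.05,0) to[out=180,in=-90] (-0.2,0.3);\node at (0.07,-0.2){$\color{darkred}\scriptstyle{\langle h_i,\lambda\rangle{-}1}$};\node at (0.07,0){$\color{darkred}\scriptstyle\spadesuit$};\end{tikzpicture}}$}}\bigr)
\quad\text{if $\langle h_i,\lambda\rangle>0$,}\\
\eta_i'|_{\RR_\lambda} &:= c_{\lambda;i}\cdot (E_i F_i x_i^{\circ(-\langle h_i,\lambda\rangle)})\circ\eta_i|_{\RR_\lambda}
\quad\text{if $\langle h_i,\lambda\rangle\leq 0$,}
\end{align*}
where in the first case the $\spadesuit$-morphism is, by its definition in the Remark following Definition~\ref{def1}, obtained from the appropriate summand of the inverse of the 2-morphism
$\rho_{i,\lambda}$, which exists precisely by axiom (M5). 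An analogous formula defines $\eps_i'$. Alternatively, since $\mathbb{R}$ is a strict 2-functor and the leftward cups and caps are already 2-morphisms in $\UU(\g)$, we may simply set $\eta_i':=\mathbb{R}(\eta_i'^{\,\UU})$ and $\eps_i':=\mathbb{R}(\eps_i'^{\,\UU})$, where the superscript $\UU$ denotes the 2-morphisms of (K2).

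Second, I would verify the two left zigzag identities
\[
(\eps_i' F_i)\circ (F_i \eta_i') = 1_{F_i},\qquad (E_i\eps_i')\circ(\eta_i' E_i) = 1_{E_i}.
\]
Both of these are 2-morphism equalities in $\mathcal{H}om_{\UU(\g)}(\lambda,\lambda\pm\alpha_i)$ obtained by applying $\mathbb{R}$ to the corresponding relations in $\UU(\g)$. These identities in $\UU(\g)$ itself are exactly what is asserted by the left adjunction relations in (K2), whose proof is given in \cite[Theorem 4.3]{B} (using this normalization, \cite[Proposition 6.2]{BE2}). Since $\mathbb{R}$ preserves horizontal and vertical composition, identity 2-morphisms, and the generating 2-morphisms (by (M1)--(M4)), applying $\mathbb{R}$ to these identities yields the required zigzag relations in $\RR$.

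The main conceptual issue is really just recognizing that (M5) provides exactly what is needed to make sense of $\rho_{i,\lambda}^{-1}$ (and hence the $\spadesuit$ and $\clubsuit$ notation), so that (K2) transports verbatim. Beyond that, no new computation is needed: the entire content of (K2) was the derivation of the left adjunction relations inside $\UU(\g)$ from the defining relations of Definition~\ref{def1}, and those defining relations are precisely what (M1)--(M5) provide in any 2-representation. The word ``canonical'' in the statement refers to the fact that, once the parameters $c_{\lambda;i}$ have been fixed (in $\UU(\g)$, not in $\RR$), the resulting $\eta_i',\eps_i'$ are uniquely determined as the images of fixed 2-morphisms of $\UU(\g)$, independently of any further choices in $\RR$.
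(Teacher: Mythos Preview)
Your approach is correct and essentially identical to the paper's: reduce to the small case to obtain a strict 2-functor $\mathbb{R}:\UU(\g)\to\CCat$, then define $\eta_i'$ and $\eps_i'$ as the images under $\mathbb{R}$ of the leftward cup and cap from (K2), with the zigzag identities inherited from those in $\UU(\g)$. The paper's proof is just your ``alternative'' formulation in one line; your additional unpacking of the explicit formulas and the role of (M5) in making sense of the $\spadesuit$/$\clubsuit$ morphisms is accurate commentary but not needed for the argument.
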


\begin{proof}
We may assume that $\RR$ is small, so that there is a
corresponding strict 2-functor $\mathbb{R}:\UU(\g) \rightarrow \CCat$.
Then let $\eta'_i|_{1_{\RR_\lambda}} := \mathbb{R}\big(\:
\mathord{
\begin{tikzpicture}[baseline = -2]
	\draw[-,thick,darkred] (0.3,0.2) to[out=-90, in=0] (0.1,-0.1);
	\draw[->,thick,darkred] (0.1,-0.1) to[out = 180, in = -90] (-0.1,0.2);
    \node at (0.3,.3) {$\scriptstyle{i}$};
\end{tikzpicture}
}\!\!
{\scriptstyle\lambda}
\big)$
and $\eps'_i|_{F_i E_i|_{\RR_\lambda}} := \mathbb{R}\big(\:\mathord{
\begin{tikzpicture}[baseline = -3]
	\draw[-,thick,darkred] (0.3,-0.1) to[out=90, in=0] (0.1,0.2);
	\draw[->,thick,darkred] (0.1,0.2) to[out = 180, in = 90] (-0.1,-0.1);
    \node at (0.3,-.2) {$\scriptstyle{i}$};
\end{tikzpicture}
}
{\scriptstyle\lambda}\big)$, where notation is as in (K2) from $\S$\ref{ji}.
\end{proof}

The other generators and relations from (K1)--(K8)
can be transported to any 2-representation $\RR$ in a similar way.
For example, from the images of the downward dots and crossings from
(K1),  
one obtains canonical natural transformations
$x'_i:F_i \rightarrow F_i$ and $\tau_{ij}':F_i F_j \rightarrow F_j
F_i$ which satisfy the negative quiver Hecke relations.

\begin{remark}\label{2repb}
Definition~\ref{2rep} 
can be formulated equivalently by
replacing (M3)--(M5) by
\begin{itemize}
\item[(M3$'$)]
natural transformations $x'_i:F_i \rightarrow F_i$
and $\tau'_{ij}:F_i F_j \rightarrow F_j F_i$
satisfying the negative quiver Hecke relations, 
i.e. so that there is a strict monoidal functor
$\Psi:\H' \rightarrow \mathcal{E}\!nd(\RR)$
with 
$\Psi(i) = F_i$,
$\Psi\big(\mathord{
\begin{tikzpicture}[baseline = 0]
	\draw[<-,thick,darkred] (0.08,-.15) to (0.08,.3);
      \node at (0.08,0.15) {$\color{darkred}\bullet$};
   \node at (0.08,.35) {$\scriptstyle{i}$};
\end{tikzpicture}
}\big) = x'_i$ and $\Phi\big(\mathord{
\begin{tikzpicture}[baseline = 0]
	\draw[<-,thick,darkred] (0.18,-.15) to (-0.18,.3);
	\draw[<-,thick,darkred] (-0.18,-.15) to (0.18,.3);
   \node at (-0.18,.4) {$\scriptstyle{j}$};
   \node at (0.18,.4) {$\scriptstyle{i}$};
\end{tikzpicture}
}\big) = \tau'_{ij}$
for all $i,j \in I$;
\item[(M4$'$)]
the unit $\eta'_i:1_\RR \rightarrow E_i F_i$ and counit $\eps'_i:F_i E_i
\rightarrow 1_\RR$ of an adjunction making $F_i$ into a left adjoint
of $E_i$;
\item[(M5$'$)]
all of the natural transformations $\sigma'_{ij}\:(i \neq j)$ and $\rho'_{i,\la}$ are
invertible, where
\begin{align*}
\sigma'_{ij} &:= E_i F_j \eps'_i \circ E_i \tau_{ij}' E_i  \circ \eta_i'
F_j E_i:F_j E_i \rightarrow E_i F_j,\\
\rho'_{i,\la} &:=
\left\{
\begin{array}{ll}
\displaystyle\sigma'_{ii} \oplus \bigoplus_{n=0}^{\hhh-1}(E_i x'_i)^{\circ n} \circ \eta'_i:F_i
E_i|_{\RR_\lambda} 
\oplus 1_{\RR_\lambda}^{\oplus
  \hhh}
\rightarrow E_i F_i|_{\RR_\lambda} 
&\text{if $\hhh \geq 0$,}\\
\displaystyle\sigma'_{ii} \oplus \bigoplus_{m=0}^{-\hhh-1}
\eps'_i \circ (x'_i E_i)^{\circ m}:F_i E_i|_{\RR_\la}\rightarrow E_i
F_i|_{\RR_\la}
 \oplus 1_{\RR_\lambda}^{\oplus
  -\hhh} 
&\text{if $\hhh \leq 0$,}
\end{array}\right.
\end{align*}
for $h := \langle h_i, \lambda \rangle$.
\end{itemize}
This follows because, in view of the alternative presentation of
$\UU(\g)$ from Remark~\ref{rotated}, 
the new formulation is also the data of a strict 2-functor.
\end{remark}

If $\RR$ is a 2-representation, the endofunctors $E_i$ and $F_i$
induce endomorphisms $[E_i]$ and $[F_i]$ of
the split Grothendieck group
\begin{equation}\label{wd}
K_0(\dot \RR) = \bigoplus_{\lambda \in P} K_0(\dot\RR_\lambda)
\end{equation}
of its additive Karoubi envelope.

\begin{lemma}\label{gg}
Given a 2-representation $\RR$,
there is a unique way to make $K_0(\dot\RR)$ 
into a module over the Kostant $\Z$-form $U(\g)_\Z$ of the universal enveloping algebra of $\g$
so that the Chevalley generators $e_i, f_i$ 
act as $[E_i], [F_i]$, respectively, 
and (\ref{wd}) is its decomposition into weight spaces.
\end{lemma}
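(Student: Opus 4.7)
The plan is to obtain the desired $U(\g)_\Z$-module structure on $K_0(\dot\RR)$ by pullback along the surjective homomorphism $\gamma:\dot U(\g)_\Z\twoheadrightarrow K_0(\dot\UU(\g))$ of (\ref{gamma}), once the 2-representation has been used to produce a natural $K_0(\dot\UU(\g))$-action on $K_0(\dot\RR)$. This reduces the lemma to a standard verification, since the hard work of showing that the assignments $e_i\mapsto[E_i],\, f_i\mapsto[F_i]$ respect the defining relations of $\dot U(\g)_\Z$ has already been absorbed into the existence of $\gamma$.

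First I would rewrite the data of $\RR$ (after passing to a small equivalent if necessary) as a strict 2-functor $\mathbb{R}:\UU(\g)\to\CCat$, which extends canonically to additive Karoubi envelopes as $\dot{\mathbb{R}}:\dot\UU(\g)\to\dot\CCat$. For each pair $\lambda,\mu\in P$, applying the split Grothendieck group construction to the induced functor $\mathcal{H}om_{\dot\UU(\g)}(\lambda,\mu)\to\mathcal{H}om_{\dot\CCat}(\dot\RR_\lambda,\dot\RR_\mu)$ yields a group homomorphism $K_0(\mathcal{H}om_{\dot\UU(\g)}(\lambda,\mu))\to\Hom_\Z(K_0(\dot\RR_\lambda),K_0(\dot\RR_\mu))$. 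Summing over $\lambda,\mu$ and invoking (\ref{gring}) and (\ref{wd}), this gives $K_0(\dot\RR)$ the structure of a left $K_0(\dot\UU(\g))$-module in which the distinguished idempotent $[1_\lambda]$ acts as the projection onto $K_0(\dot\RR_\lambda)$, while $[E_i1_\lambda]$ and $[F_i1_\lambda]$ act as $[E_i]$ and $[F_i]$ restricted to $K_0(\dot\RR_\lambda)$. Composing with $\gamma$ produces a left $\dot U(\g)_\Z$-module structure on $K_0(\dot\RR)$ with the same properties. Finally, since locally unital $\dot U(\g)_\Z$-modules are precisely the same data as weight $U(\g)_\Z$-modules (with $h\in\mathfrak{h}$ acting on the $\lambda$-weight space as $\langle h,\lambda\rangle$), I would translate this into a $U(\g)_\Z$-module structure on $K_0(\dot\RR)$ for which (\ref{wd}) is the weight space decomposition and $e_i,f_i$ act as $[E_i],[F_i]$.

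For uniqueness, I would observe that any such $U(\g)_\Z$-module structure on $K_0(\dot\RR)$ is determined by the prescribed actions: the Cartan $\mathfrak{h}$ acts on $K_0(\dot\RR_\lambda)$ by the scalars $\langle-,\lambda\rangle$ because (\ref{wd}) is the weight decomposition, and the remaining Kostant generators $e_i^{(n)},\,f_i^{(n)},\,\binom{h_i}{n}$ are then determined by $e_i,\,f_i$ and the relations (with $e_i^{(n)}$ necessarily acting as $[E_i^{(n)}]$ in view of Lemma~\ref{divpower}). The only genuine obstacle in the existence argument is showing that the prescribed operators satisfy the relations of $\dot U(\g)_\Z$; but this is exactly the content of the surjectivity of $\gamma$, which was established earlier using Lemmas~\ref{divpower} and~\ref{catserre} (together with the categorical inversion isomorphisms (M5) producing the required commutation $[e_i,f_j]1_\lambda=\delta_{i,j}\langle h_i,\lambda\rangle 1_\lambda$ on passage to $K_0$). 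Thus no further work is needed.
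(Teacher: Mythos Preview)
Your proposal is correct and follows essentially the same approach as the paper: pass to a small 2-representation to obtain a strict 2-functor $\dot{\mathbb{R}}:\dot\UU(\g)\to\CCat$, use it to make $K_0(\dot\RR)$ into a $K_0(\dot\UU(\g))$-module via $[F][P]:=[\dot{\mathbb{R}}(F)(P)]$, and then pull back along the surjection $\gamma$ of (\ref{gamma}) (noting explicitly, as the paper does, that injectivity of $\gamma$ is not needed). Your added remarks on uniqueness and on the passage between $\dot U(\g)_\Z$ and $U(\g)_\Z$ are correct elaborations that the paper leaves implicit.
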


\begin{proof}
We may assume that $\RR$ (hence, $\dot \RR$) is small, so that there is
a corresponding strict 2-functor
$\dot{\mathbb{R}}:\dot\UU(\g) \rightarrow \CCat$
with $\dot{\mathbb{R}}(\lambda) = \dot\RR_\lambda$, etc... 
The definition of strict 2-functor then ensures that $K_0(\dot\RR)$ is a
module over $K_0(\dot\UU(\g))$:
for $F \in \ob \mathcal{H}om_{\dot\UU(\g)}(\lambda,\mu)$
defining $[F] \in K_0(\dot\UU(\g))$, and $P \in \ob \dot\RR_\la$ 
defining $[P] \in K_0(\dot\RR_\lambda)$, we set 
$[F][P] := [\dot{\mathbb{R}}(F)(P)] \in K_0(\dot\RR_\mu)$.
It remains to lift the action of $K_0(\dot\UU(\g))$ to $\dot U(\g)_\Z$
using the homomorphism $\gamma$ from (\ref{gamma});
this does not depend on the injectivity of
$\gamma$. 
\end{proof}

Lemma~\ref{gg} shows for any 2-representation $\RR$ that $\mathbb{C}
\otimes_\Z K_0(\dot\RR)$ is a $\g$-module in a canonical way.
Typically, it is an {\em integrable} $\g$-module in view of the next lemma.

\begin{lemma}\label{maid}
If $\RR$ is a finite-dimensional 2-representation, then
$\mathbb{C}\otimes_{\Z} K_0(\dot\RR)$ is an integrable $\g$-module.
\end{lemma}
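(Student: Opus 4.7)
The plan is to verify the two defining properties of integrability for the $\g$-module $V := \mathbb{C} \otimes_{\Z} K_0(\dot\RR)$: a weight-space decomposition, and local nilpotence of each Chevalley generator $e_i$ and $f_i$. The weight decomposition $V = \bigoplus_{\lambda \in P} \mathbb{C} \otimes_\Z K_0(\dot\RR_\lambda)$ is already built into the construction of the $\g$-action in Lemma~\ref{gg}, so the actual content lies in the local nilpotence.

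For $e_i$, the argument is essentially a direct application of Lemma~\ref{proud}. The 2-representation structure on $\RR$ extends canonically to its additive Karoubi envelope $\dot\RR$, producing in particular a strict monoidal functor $\H \to \mathcal{E}\!nd(\dot\RR)$ from the quiver Hecke category with $i \mapsto E_i$. Since $\RR$ is finite-dimensional, so is $\dot\RR$ (taking an additive envelope followed by an idempotent completion does not introduce infinite-dimensional hom spaces). Lemma~\ref{proud} then applies: for each $V \in \ob \dot\RR_\lambda$, the endomorphism $(x_i)_V$ lies in the finite-dimensional algebra $\End_{\dot\RR}(E_i V)$ and hence satisfies some monic polynomial of degree $n$, so $E_i^{(n+1)} V \cong 0$. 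Combined with Lemma~\ref{divpower}, this yields $E_i^{n+1} V \cong 0$, whence $[E_i]^{n+1} [V] = 0$ in $K_0(\dot\RR)$, i.e.\ $e_i$ acts locally nilpotently.

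For $f_i$, I would invoke the equivalent reformulation of a 2-representation given in Remark~\ref{2repb}, which supplies $\RR$ with a strict monoidal functor $\H' \to \mathcal{E}\!nd(\RR)$ from the negative quiver Hecke category sending $i \mapsto F_i$, matching downward dots and crossings to natural transformations $x'_i$ and $\tau'_{ij}$ satisfying the negative quiver Hecke relations. Since $\H'$ is isomorphic to $\H$, the mirror of Lemma~\ref{proud} (with $E_i, x_i$ replaced by $F_i, x'_i$) applies to produce, for each $V \in \ob \dot\RR_\lambda$, some $m$ with $F_i^{m+1} V \cong 0$, and hence $[F_i]^{m+1} [V] = 0$. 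No step presents a real obstacle: the substantive input is already contained in Lemma~\ref{proud} and Lemma~\ref{divpower}, and the only point that warrants even a brief verification is that the additive Karoubi envelope preserves the finite-dimensional hypothesis, so that Lemma~\ref{proud} is genuinely available in $\dot\RR$ and $\dot\RR$ inherits both the positive and the negative quiver Hecke actions needed for the two halves of the argument.
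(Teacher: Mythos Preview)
Your proposal is correct and follows essentially the same approach as the paper: the paper's proof is the two-sentence version of exactly what you wrote, invoking Lemma~\ref{proud} for the local nilpotency of $e_i$ and then saying ``similar arguments with the negative nil Hecke category'' for $f_i$. Your added remarks about the Karoubi envelope preserving finite-dimensionality and the role of Remark~\ref{2repb} in supplying the negative quiver Hecke action are the natural unpackings of those two sentences.
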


\begin{proof}
The local nilpotency of $e_i$ follows from Lemma~\ref{proud}.
Similar arguments with the negative nil Hecke category show that $f_i$
is locally nilpotent too.
\end{proof}

\begin{definition}\label{f}
Let $\RR$ and $\SS$ be 2-representations.
A {\em strongly equivariant functor}
$G:\RR \rightarrow \SS$
is a functor such that $G|_{\RR_\lambda}:\RR_\lambda \rightarrow
\SS_\lambda$ for each $\lambda \in P$,
plus
natural isomorphisms
$\zeta_{i}= \zeta_{G,i}:E_iG \Rightarrow G E_i$,
such that the following hold for each $i \in I$:
\begin{itemize}
\item[(E1)]
the natural transformation $F_i G \eps_i \circ F_i \zeta_{i} F_i \circ
\eta_i G F_i:G F_i \Rightarrow F_i G$ is invertible,
with inverse denoted $\zeta_i':F_i G \Rightarrow G F_i$;
\item[(E2)]
we have that $G x_i \circ \zeta_{i} = \zeta_{i} \circ x_i G$;
\item[(E3)] we have that $G \tau_{ij} \circ \zeta_{i} E_j \circ E_i \zeta_{j} = \zeta_{j}
  E_i \circ E_j \zeta_{i} \circ \tau_{ij} G$.
\end{itemize}
There is an obvious way to make the composition $GH$ of two strongly
equivariant functors into a strongly equivariant functor in its own
right: set $\zeta_{GH,i} := \zeta_{G,i} H \circ G \zeta_{H,i}$. 
Also the identity functor $1$ is strongly equivariant with
$\zeta_{1,i} := 1_{E_i}$.
Let $\mathcal{R}ep(\UU(\g))$ be the resulting  category of (small)
2-representations and strongly equivariant functors.
\end{definition}

\begin{definition}\label{sees}
A {\em strongly equivariant equivalence} is 
an equivalence of categories $G:\RR\rightarrow\SS$ such that $G|_{\mathcal R_\lambda}:\RR_\lambda
\rightarrow \SS_\lambda$ for each $\lambda$, plus isomorphisms
$\zeta_i:E_i G \Rightarrow G E_i$ satisfying (E2)--(E3) for each $i
\in I$.
\end{definition}

\begin{remark}
In the situation of Definition~\ref{sees}, the axiom (E1) holds automatically, i.e. strongly
equivariant equivalences are strongly equivariant functors.
To see this, fix a right
adjoint $H$ to $G$. The given  isomorphism $\zeta_i:E_i G \Rightarrow G E_i$ induces a canonical isomorphism
$F_i H \Rightarrow H F_i$ between the right adjoints of $GE_i$ and
$E_i G$.
Horizontally composing on the left and right with $G$ and using the isomorphisms
$1 \Rightarrow HG$ and $GH \Rightarrow 1$ defined by the adjunction,
we get an isomorphism $G F_i \Rightarrow F_i G$. This is precisely the
natural transformation written down in (E1).
\end{remark}

It is helpful to 
interpret Definition~\ref{f} in terms of the
string calculus for 2-morphisms in $\CCat$.
We represent $\zeta_{i}$ by 
$\mathord{
\begin{tikzpicture}[baseline = -2]
	\draw[-,thick,dotted,darkred] (0.18,-.15) to (-0.18,.3);
	\draw[->,thick,darkred] (-0.18,-.15) to (0.18,.3);
   \node at (-0.18,-.25) {$\scriptstyle{i}$};
\end{tikzpicture}
}\:
$ (so the dotted line is the identity morphism $1_G$, to the right of which is the
category $\RR$ and to the left is $\SS$).
Then it is natural to denote its inverse by
$\mathord{
\begin{tikzpicture}[baseline = -2]
	\draw[-,thick,dotted,darkred] (-0.18,-.15) to (0.18,.3);
	\draw[->,thick,darkred] (0.18,-.15) to (-0.18,.3);
   \node at (0.18,-.25) {$\scriptstyle{i}$};
\end{tikzpicture}
}
$.
The 2-morphism in (E1) is 
$
\mathord{
\begin{tikzpicture}[baseline = 0]
	\draw[<-,thick,darkred] (0.18,-.15) to (-0.18,.3);
	\draw[-,thick,dotted,darkred] (-0.18,-.15) to (0.18,.3);
   \node at (-0.18,.4) {$\scriptstyle{i}$};
\end{tikzpicture}
}:=
\mathord{
\begin{tikzpicture}[baseline = -2]
	\draw[-,dotted,thick,darkred] (0.2,-.4) to (-0.2,.4);
	\draw[-,thick,darkred] (-0.15,-.2) to (0.15,.3);
        \draw[-,thick,darkred] (0.15,.3) to[out=50,in=180] (0.3,.4);
        \draw[->,thick,darkred] (0.3,.4) to[out=0,in=90] (0.6,-.4);
        \draw[-,thick,darkred] (-0.15,-.2) to[out=230,in=0] (-0.4,-.4);
        \draw[-,thick,darkred] (-0.4,-.4) to[out=180,in=-90] (-0.6,.4);
  \node at (-0.6,.5) {$\scriptstyle{i}$};
\end{tikzpicture}
}\,$.
We denote its inverse $\zeta_i'$ by 
$\mathord{
\begin{tikzpicture}[baseline = 0]
	\draw[<-,thick,darkred] (-0.18,-.15) to (0.18,.3);
	\draw[-,thick,dotted,darkred] (0.18,-.15) to (-0.18,.3);
   \node at (0.18,.4) {$\scriptstyle{i}$};
\end{tikzpicture}
}
$.
The axioms (E2) and (E3) are the identities
\begin{equation}\label{shower1}
\mathord{
\begin{tikzpicture}[baseline = 0]
	\draw[<-,thick,darkred] (0.25,.6) to (-0.25,-.2);
	\draw[-,dotted,thick,darkred] (0.25,-.2) to (-0.25,.6);
  \node at (-0.25,-.26) {$\scriptstyle{i}$};
      \node at (0.13,0.42) {$\color{darkred}\bullet$};
\end{tikzpicture}
}
=
\mathord{
\begin{tikzpicture}[baseline = 0]
	\draw[<-,thick,darkred] (0.25,.6) to (-0.25,-.2);
	\draw[-,dotted,thick,darkred] (0.25,-.2) to (-0.25,.6);
  \node at (-0.25,-.26) {$\scriptstyle{i}$};
      \node at (-0.13,-0.02) {$\color{darkred}\bullet$};
\end{tikzpicture}
}\:,
\qquad
\mathord{
\begin{tikzpicture}[baseline = 0]
	\draw[<-,thick,darkred] (0.45,.8) to (-0.45,-.4);
	\draw[-,dotted,thick,darkred] (0.45,-.4) to (-0.45,.8);
        \draw[-,thick,darkred] (0,-.4) to[out=90,in=-90] (.45,0.2);
        \draw[->,thick,darkred] (0.45,0.2) to[out=90,in=-90] (0,0.8);
   \node at (-0.45,-.45) {$\scriptstyle{i}$};
   \node at (0,-.45) {$\scriptstyle{j}$};
\end{tikzpicture}
}=
\mathord{
\begin{tikzpicture}[baseline = 0]
	\draw[<-,thick,darkred] (0.45,.8) to (-0.45,-.4);
	\draw[-,dotted,thick,darkred] (0.45,-.4) to (-0.45,.8);
        \draw[-,thick,darkred] (0,-.4) to[out=90,in=-90] (-.45,0.2);
        \draw[->,thick,darkred] (-0.45,0.2) to[out=90,in=-90] (0,0.8);
   \node at (-0.45,-.45) {$\scriptstyle{i}$};
   \node at (0,-.45) {$\scriptstyle{j}$};
\end{tikzpicture}
}
\:\,,
\end{equation}
where the dots and solid crossings represent $x_i$ and $\tau_{ij}$, respectively.
Representing $\eta_i,\eta_i', \eps_i$ and $\eps_i'$ by
oriented cups
and caps as usual, one can check further that
\begin{equation}\label{shower2}
\mathord{
\begin{tikzpicture}[baseline = 0]
\draw[-,thick,dotted,darkred](-.5,.4) to (0,-.3);
	\draw[<-,thick,darkred] (0.3,-0.3) to[out=90, in=0] (0,0.2);
	\draw[-,thick,darkred] (0,0.2) to[out = -180, in = 40] (-0.5,-0.3);
    \node at (-0.5,-.4) {$\scriptstyle{i}$};
\end{tikzpicture}
}=\mathord{
\begin{tikzpicture}[baseline = 0]
\draw[-,dotted,thick,darkred](.6,.4) to (.1,-.3);
	\draw[<-,thick,darkred] (0.6,-0.3) to[out=140, in=0] (0.1,0.2);
	\draw[-,thick,darkred] (0.1,0.2) to[out = -180, in = 90] (-0.2,-0.3);
    \node at (-0.2,-.4) {$\scriptstyle{i}$};
\end{tikzpicture}
}\:,
\quad
\mathord{
\begin{tikzpicture}[baseline = 5]
\draw[-,dotted,thick,darkred](-.5,-.3) to (0,.4);
	\draw[<-,thick,darkred] (0.3,0.4) to[out=-90, in=0] (0,-0.1);
	\draw[-,thick,darkred] (0,-0.1) to[out = 180, in = -40] (-0.5,0.4);
    \node at (-0.5,.5) {$\scriptstyle{i}$};
\end{tikzpicture}
}=\mathord{
\begin{tikzpicture}[baseline = 5]
\draw[-,dotted,thick,darkred](.6,-.3) to (.1,.4);
	\draw[<-,thick,darkred] (0.6,0.4) to[out=-140, in=0] (0.1,-0.1);
	\draw[-,thick,darkred] (0.1,-0.1) to[out = 180, in = -90] (-0.2,0.4);
    \node at (-0.2,.5) {$\scriptstyle{i}$};
\end{tikzpicture}
}\,,\quad
\mathord{
\begin{tikzpicture}[baseline = 0]
\draw[-,dotted,thick,darkred](-.5,.4) to (0,-.3);
	\draw[-,thick,darkred] (0.3,-0.3) to[out=90, in=0] (0,0.2);
	\draw[->,thick,darkred] (0,0.2) to[out = -180, in = 40] (-0.5,-0.3);
    \node at (0.3,-.4) {$\scriptstyle{i}$};
\end{tikzpicture}
}=
\mathord{
\begin{tikzpicture}[baseline = 0]
\draw[-,dotted,thick,darkred](.6,.4) to (.1,-.3);
	\draw[-,thick,darkred] (0.6,-0.3) to[out=140, in=0] (0.1,0.2);
	\draw[->,thick,darkred] (0.1,0.2) to[out = -180, in = 90] (-0.2,-0.3);
    \node at (0.6,-.4) {$\scriptstyle{i}$};
\end{tikzpicture}
}\,,\quad
\mathord{
\begin{tikzpicture}[baseline = 5]
\draw[-,dotted,thick,darkred](-.5,-.3) to (0,.4);
	\draw[-,thick,darkred] (0.3,0.4) to[out=-90, in=0] (0,-0.1);
	\draw[->,thick,darkred] (0,-0.1) to[out = 180, in = -40] (-0.5,0.4);
    \node at (0.3,.5) {$\scriptstyle{i}$};
\end{tikzpicture}
}=\mathord{
\begin{tikzpicture}[baseline = 5]
\draw[-,dotted,thick,darkred](.6,-.3) to (.1,.4);
	\draw[-,thick,darkred] (0.6,0.4) to[out=-140, in=0] (0.1,-0.1);
	\draw[->,thick,darkred] (0.1,-0.1) to[out = 180, in = -90] (-0.2,0.4);
    \node at (0.6,.5) {$\scriptstyle{i}$};
\end{tikzpicture}
}.
\end{equation}
The first two of these are almost immediate; the second two follow
using the inversion relations and the definitions of the leftward cups and
caps in $\UU(\g)$.

\begin{remark}\label{yyy}
If $\RR$, $\SS$ are small 2-representations with associated
2-functors $\R$, $\SSS$,
the data of a strongly equivariant functor $G:\RR
\rightarrow \SS$ is the same as the data of a morphism of 2-functors
$G:\R \rightarrow \SSS$
as in \cite[Definition 2.3]{Rou}
(with $G(\lambda) = G|_{\RR_\lambda}$).
Indeed, given any 1-morphism $u:\lambda\rightarrow \mu$ in $\UU(\g)$,
i.e. an appropriate composition of several $E_i$ and $F_j$ applied to $1_\lambda$, 
the corresponding horizontal composition of $\zeta_i$ and $\zeta_j'$
defines a natural isomorphism $\zeta(u):\SSS(u) G(\lambda)
\Rightarrow 
G(\mu)
\R(u)$ satisfying the axioms of a morphism of 2-functors.
The non-trivial part about this assertion is the naturality of $\zeta(u)$,
i.e. the statement that 
$G(\mu) \R(\xi) \circ \zeta(u) = \zeta(v) \circ \SSS(\xi)
G(\lambda)$
for all 2-morphisms $\xi:u \Rightarrow v$. In pictures:
$$
\scriptstyle{\mu}
\mathord{
\begin{tikzpicture}[baseline = 4]
	\draw[-,dotted,thick,darkred] (0.5,-.4) to (-0.12,.8);
	\draw[-,thick,darkred] (-0.14,-.4) to (0.27,.4);
	\draw[-,thick,darkred] (-0.10,-.4) to (0.31,.4);
	\draw[-,thick,darkred] (0.45,.8) to (0.37,.64);
	\draw[-,thick,darkred] (0.49,.8) to (0.41,.64);
      \draw[thick,darkred] (0.34,0.52) circle (4pt);
   \node at (0.34,0.52) {\color{darkred}$\scriptstyle{\xi}$};
   \node at (-0.12,-.53) {$\scriptstyle{u}$};
   \node at (0.5,.9) {$\scriptstyle{v}$};
\end{tikzpicture}
}
\scriptstyle{\lambda}
\quad
=
\quad
\scriptstyle{\mu}
\mathord{
\begin{tikzpicture}[baseline = 4]
	\draw[-,dotted,thick,darkred] (0.5,-.4) to (-0.12,.8);
	\draw[-,thick,darkred] (-0.09,-.4) to (0.01,-.2);
	\draw[-,thick,darkred] (-0.13,-.4) to (-0.03,-.2);
	\draw[-,thick,darkred] (0.52,.8) to (0.11,.04);
	\draw[-,thick,darkred] (0.48,.8) to (0.07,.04);
      \draw[thick,darkred] (0.04,-0.08) circle (4pt);
   \node at (0.04,-0.08) {\color{darkred}$\scriptstyle{\xi}$};
   \node at (-0.12,-.53) {$\scriptstyle{u}$};
   \node at (0.5,.9) {$\scriptstyle{v}$};
\end{tikzpicture}
}
\scriptstyle{\lambda}\:.
$$
The proof of this reduces to checking it in case $\xi$ is an upward dot or
crossing or any cup or cap, since these generate all 2-morphisms in
$\UU(\g)$ thanks to (K5). These cases are covered by (\ref{shower1})--(\ref{shower2}).
\end{remark}

\begin{remark}
Using (\ref{shower2}),
we see in particular that 
$\mathord{
\begin{tikzpicture}[baseline = 0]
	\draw[-,thick,dotted,darkred] (0.18,-.15) to (-0.18,.3);
	\draw[->,thick,darkred] (-0.18,-.15) to (0.18,.3);
   \node at (-0.18,-.25) {$\scriptstyle{i}$};
\end{tikzpicture}
}=
\bigg(\mathord{
\begin{tikzpicture}[baseline = -2]
	\draw[-,dotted,thick,darkred] (0.2,-.4) to (-0.2,.4);
	\draw[-,thick,darkred] (-0.15,-.2) to (0.15,.3);
        \draw[-,thick,darkred] (0.15,.3) to[out=50,in=180] (0.3,.4);
        \draw[-,thick,darkred] (0.3,.4) to[out=0,in=90] (0.6,-.4);
        \draw[-,thick,darkred] (-0.15,-.2) to[out=230,in=0] (-0.4,-.4);
        \draw[->,thick,darkred] (-0.4,-.4) to[out=180,in=-90] (-0.6,.4);
  \node at (0.6,-.5) {$\scriptstyle{i}$};
\end{tikzpicture}
}\bigg)^{-1}$, i.e.
the natural transformation
$\zeta_i'$ determines $\zeta_i$.
Indeed, using also Remark~\ref{yyy}, 
one can reformulate
Definition~\ref{f} equivalently
in terms of isomorphisms $\zeta_i':F_i G \Rightarrow G F_i$.
The axioms (E1)--(E3) become:
\begin{itemize}
\item[(E1$'$)]
the natural transformation $E_i G \eps'_i \circ E_i \zeta'_{i} E_i \circ
\eta'_i G E_i:G E_i \Rightarrow E_i G$ is invertible,
with inverse denoted $\zeta_i:E_i G \Rightarrow G E_i$;
\item[(E2$'$)]
we have that $G x'_i \circ \zeta'_{i} = \zeta'_{i} \circ x'_i G$;
\item[(E3$'$)] we have that $G \tau'_{ij} \circ \zeta'_{i} F_j \circ F_i \zeta'_{j} = \zeta'_{j}
  F_i \circ F_j \zeta'_{i} \circ \tau'_{ij} G$.
\end{itemize}
This version is compatible with the definition 
of 2-representation
from Remark~\ref{2repb}.
\end{remark}

\begin{definition}
A {\em strongly equivariant natural transformation}
between strongly equivariant functors $G, H:\RR \rightarrow
\SS$ is a natural transformation $\pi:G \Rightarrow H$ such
that
$\pi E_i \circ \zeta_{G,i} = \zeta_{H,i} \circ E_i \pi:E_i G
\Rightarrow H E_i$.
Let $\mathfrak{R}\mathrm{ep}(\UU(\g))$ be the strict 2-category of (small)
2-representations, strongly equivariant functors and strongly
equivariant natural transformations. We denote the morphism categories
in this 2-category by $\mathcal{H}om_{\UU(\g)}(\RR, \SS)$.
\end{definition}

\begin{remark}
In the setup of Remark~\ref{yyy},
a strongly equivariant natural transformation is the same as a modification between morphisms of 2-functors in the
sense of \cite[Definition 2.4]{Rou}.
\end{remark}

\subsection{Generalized cyclotomic quotients}
In this subsection, we define some important examples of
2-representations. We need a couple more basic notions to prepare for this.

Fix a weight $\kappa \in P$.
Then there is a 2-representation\footnote{In \cite[$\S$4.3.3]{R2}, Rouquier denotes this
  by $\M(\kappa)$, but that seems confusing notation since it is 
a categorification of the left ideal $\dot U(\g) 1_\kappa$
rather than the Verma module $M(\kappa)$.}
 $\RR(\kappa)$ of $\UU(\g)$ defined as follows:
$\RR(\kappa)_\lambda
:= \mathcal{H}om_{\UU(\g)}(\kappa,\lambda)$;
$E_i$ (resp. $F_i$)
is the functor
defined by horizontally composing 
$1$-morphisms on the left by $E_i$ (resp. $F_i$)
and $2$-morphisms on the left by
$
\mathord{
\begin{tikzpicture}[baseline = -2]
	\draw[->,thick,darkred] (0.08,-.15) to (0.08,.3);
   \node at (0.08,-.25) {$\scriptstyle{i}$};
\end{tikzpicture}
}
{\scriptstyle\lambda}
$
(resp. $\mathord{
\begin{tikzpicture}[baseline = 2]
	\draw[<-,thick,darkred] (0.08,-.15) to (0.08,.3);
   \node at (0.08,.4) {$\scriptstyle{i}$};
\end{tikzpicture}
}
{\scriptstyle\lambda}$);
$x_i, \tau_{ij}, \eta_i$ and $\eps_i$
are the natural transformations 
defined by horizontally composing on the left by
$\mathord{
\begin{tikzpicture}[baseline = -2]
	\draw[->,thick,darkred] (0.08,-.15) to (0.08,.3);
      \node at (0.08,0.05) {$\color{darkred}\bullet$};
   \node at (0.08,-.25) {$\scriptstyle{i}$};
\end{tikzpicture}
}$,
$\mathord{
\begin{tikzpicture}[baseline = -2]
	\draw[->,thick,darkred] (0.18,-.15) to (-0.18,.3);
	\draw[->,thick,darkred] (-0.18,-.15) to (0.18,.3);
   \node at (-0.18,-.25) {$\scriptstyle{i}$};
   \node at (0.18,-.25) {$\scriptstyle{j}$};
\end{tikzpicture}
}
$,
$\mathord{
\begin{tikzpicture}[baseline = 0]
	\draw[<-,thick,darkred] (0.3,0.2) to[out=-90, in=0] (0.1,-0.1);
	\draw[-,thick,darkred] (0.1,-0.1) to[out = 180, in = -90] (-0.1,0.2);
    \node at (-0.1,.3) {$\scriptstyle{i}$};
\end{tikzpicture}
}\,
$
and
$\mathord{
\begin{tikzpicture}[baseline = -2]
	\draw[<-,thick,darkred] (0.3,-0.1) to[out=90, in=0] (0.1,0.2);
	\draw[-,thick,darkred] (0.1,0.2) to[out = 180, in = 90] (-0.1,-0.1);
    \node at (-0.1,-.2) {$\scriptstyle{i}$};
\end{tikzpicture}
}\,$, respectively.

An {\em invariant ideal}\footnote{In Rouquier's language, an invariant
  ideal is the data of a full sub-2-representation.} $\I$ of a 2-representation $\RR$ is a
family of subspaces $\I(b,c) \leq \Hom_{\RR}(b,c)$
for each $b, c \in \ob \RR$
such that
\begin{itemize}
\item
$f \in \Hom_{\RR}(a,b)\text{ and }g \in \I(b,c)
\Rightarrow g \circ f \in \I(a,c)$;
\item
$h \in \Hom_{\RR}(c,d)\text{ and }g \in \I(b,c)
\Rightarrow
h \circ g \in \I(b,d)$;
\item
$g \in \I(b,c)\text{ and }i \in I 
\Rightarrow E_i(g) \in \I(E_i(b), E_i(c))\text{ and }F_i(g) \in \I(F_i(b), F_i(c))$.
\end{itemize}
Given an invariant ideal $\I$, the quotient category $\RR / \I$ is the
category with the same objects as $\RR$ and morphisms
$\Hom_{\RR / \I}(b,c) := \Hom_\RR(b,c) / \I(b,c)$.
It  has a naturally induced structure of 2-representation in its own
right.

Now we are ready for the main construction.

\begin{example}\label{bad}
Fix weights $\kappa \in P^+$, $\kappa' \in -P^+$.
Let $k_i := \langle h_i, \kappa\rangle$, $k_i' := -\langle h_i,
\kappa'\rangle$, and take
a family of indeterminates $\{z_{i,r}, z_{j,s}'\:|\:i,j \in I, 
1 \leq r \leq k_i, 1 \leq s \leq k_j'\}$.
Let
$\k[z] := \k[z_{i,r}, z_{j,s}'\:|\:i,j \in I, 1 \leq r \leq k_i, 1
\leq k_j']$
be the corresponding polynomial algebra.
Adopting the convention that $z_{i,0}=z_{i,0}' := 1$, 
we define new variables $\delta_{i,s}, \delta_{i,s}' \in \k[z]$ for $s
\geq 0$
from the generating functions
\begin{align}\label{vip}
\sum_{s \geq 0} \delta_{i,s} t^{s}  &:=
c_{\kappa+\kappa';i}
\frac{ \sum_{r=0}^{k'_i} z_{i,r}'t^{r} }{ \sum_{r=0}^{k_i} z_{i,r}t^{r} },
&
\sum_{s \geq 0} \delta'_{i,s} t^{s}  &:=
c_{\kappa+\kappa';i}^{-1}
\frac{ \sum_{r=0}^{k_i} z_{i,r}t ^{r} }{ \sum_{r=0}^{k'_i} z_{i,r}' t^{r} }.
\end{align}
Here, 
we are working in $\k[z][[t]]$ where
$t$ is a formal parameter.
Let $\RR := \RR(\kappa+\kappa') \otimes_{\k} \k[z]$
be the $\k[z]$-linear 2-representation 
obtained from
$\RR(\kappa+\kappa')$ by extending scalars
in the obvious way. Let
$\I$ be the $\k[z]$-linear invariant ideal of $\RR$ generated by
the morphisms
\begin{align}
&\sum_{r=0}^{k_i}
\bigg(\mathord{
\begin{tikzpicture}[baseline = 2]
	\draw[<-,darkred,thick] (0.08,-.3) to (0.08,.4);
      \node at (0.08,0.1) {$\color{darkred}\bullet$};
   \node at (0.1,.5) {$\scriptstyle{i}$};
   \node at (-0.36,.1) {$\color{darkred}\scriptstyle{k_i-r}$};
\end{tikzpicture}
}
{\scriptstyle\kappa+\kappa'}
\bigg) z_{i,r},\label{buh1}\\
& 
\mathord{
\begin{tikzpicture}[baseline = 0]
  \draw[<-,thick,darkred] (0,0.4) to[out=180,in=90] (-.2,0.2);
  \draw[-,thick,darkred] (0.2,0.2) to[out=90,in=0] (0,.4);
 \draw[-,thick,darkred] (-.2,0.2) to[out=-90,in=180] (0,0);
  \draw[-,thick,darkred] (0,0) to[out=0,in=-90] (0.2,0.2);
 \node at (0,-.1) {$\scriptstyle{i}$};
   \node at (-0.65,0.2) {$\scriptstyle{\kappa+\kappa'}$};
   \node at (0.2,0.2) {$\color{darkred}\bullet$};
   \node at (0.6,0.2) {$\color{darkred}\scriptstyle{s+*}$};
\end{tikzpicture}
}
- 1_{\kappa+\kappa'} \delta_{i,s}\label{huh1}
 \end{align}
for all $i \in I$ and $s=1,\dots,k_i'$, using the shorthand (\ref{hfg}).
Taking the quotient, we obtain the $\k[z]$-linear 2-representation
\begin{equation}
\L(\kappa'|\kappa) := \RR / \I.
\end{equation}
Finally, we viewing
$\k$ as a $\k[z]$-algebra so that each $z_{i,r},
z_{i,r}'$ act as zero, we have the {\em minimal specialization}
\begin{equation}\label{fault}
\Lm(\kappa'|\kappa) := 
\L(\kappa'|\kappa) \otimes_{\k[z]} \k.
\end{equation}
\end{example}

\begin{lemma}\label{new}
The ideal $\I$ in Construction~\ref{bad} is generated also by the 
morphisms
\begin{align}
&\sum_{r=0}^{k_i'}
\bigg(
\mathord{
\begin{tikzpicture}[baseline = -2]
	\draw[->,darkred,thick] (0.08,-.3) to (0.08,.4);
      \node at (0.08,0) {$\color{darkred}\bullet$};
   \node at (0.1,-.4) {$\scriptstyle{i}$};
   \node at (-0.36,0) {$\color{darkred}\scriptstyle{k'_i-r}$};
\end{tikzpicture}
}
{\scriptstyle\kappa+\kappa'}
\bigg) z'_{i,r},\label{buh2}\\
&
\mathord{
\begin{tikzpicture}[baseline = 0]
  \draw[-,thick,darkred] (0,0.4) to[out=180,in=90] (-.2,0.2);
  \draw[->,thick,darkred] (0.2,0.2) to[out=90,in=0] (0,.4);
 \draw[-,thick,darkred] (-.2,0.2) to[out=-90,in=180] (0,0);
  \draw[-,thick,darkred] (0,0) to[out=0,in=-90] (0.2,0.2);
 \node at (0,-.1) {$\scriptstyle{i}$};
   \node at (-0.65,0.2) {$\scriptstyle{\kappa+\kappa'}$};
   \node at (0.2,0.2) {$\color{darkred}\bullet$};
   \node at (0.6,0.2) {$\color{darkred}\scriptstyle{s+*}$};
\end{tikzpicture}
}
- 1_{\kappa+\kappa'} \delta_{i,s}',\label{huh2}
\end{align}
for all $i \in I$ and $s=1,\dots,k_i$.
Moreover, it contains (\ref{huh1}) 
and (\ref{huh2}) 
for all $s \geq 0$.
\end{lemma}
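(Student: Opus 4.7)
The plan is to lift both the given and target relations into power-series identities in $\End_{\RR/\I}(1_{\kappa+\kappa'})[[t]]$ and exploit the multiplicative-inverse structure present on both sides. Let $B_i(t) := \sum_{s \geq 0} \beta_s\, t^s$ and $B'_i(t) := \sum_{s \geq 0} \beta'_s\, t^s$ collect the clockwise and counter-clockwise bubbles in region $\kappa+\kappa'$ with $s+*$ dots in the shorthand of (\ref{hfg}), and write $D_i(t) := \sum_{s \geq 0} \delta_{i,s}\, t^s$, $D'_i(t) := \sum_{s \geq 0} \delta'_{i,s}\, t^s$. The defining formulas (\ref{vip}) give $D_i(t) D'_i(t) = 1$, while the infinite Grassmannian relations of (K4) imply $B_i(t) B'_i(t) = 1$ identically in $\End_{\UU(\g)}(1_{\kappa+\kappa'})[[t]]$. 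The congruences $B_i \equiv D_i \pmod{\I}$ and $B'_i \equiv D'_i \pmod{\I}$, meaning coefficient-wise in $\End_{\RR}(1_{\kappa+\kappa'})$, are therefore equivalent.

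To establish that $\I$ contains (\ref{huh1}) for all $s \geq 0$, note first that $s = 0$ is automatic: from (\ref{vip}) with $z_{i,0} = z'_{i,0} = 1$ one obtains $\delta_{i,0} = c_{\kappa+\kappa';i}$, and the $n = 0$ case of (\ref{ee}) shows the constant term of $B_i(t)$ also equals $c_{\kappa+\kappa';i}$. The cases $1 \leq s \leq k_i'$ are the defining generators. For $s > k_i'$ the plan is to cap off (\ref{buh1}): apply $E_i$ to that relation, insert extra dots on the resulting outer $E_i$-strand, and pre/post-compose with $\eta'_i$ and $\epsilon_i$; after reduction via cyclicity (K3), the curl relations (K6), and the $c_{\kappa+\kappa';i}$-normalization of the leftward cup/cap from (K2), the result is an element of $\I\cap \End_{\RR}(1_{\kappa+\kappa'})$ equal to the degree-$s$ coefficient of $\phi_i(t)\,B_i(t)$, where $\phi_i(t) := \sum_r z_{i,r}\,t^r$. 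On the scalar side, (\ref{vip}) reads $\phi_i(t)\,D_i(t) = c_{\kappa+\kappa';i}\,\phi'_i(t)$, whose right-hand side vanishes in degrees $s > k_i'$. Combining the cap-off relations with the imposed cases $s \leq k_i'$ and inducting on $s$ yields $B_i \equiv D_i \pmod{\I}$, and the inverse-series observation of the first paragraph then delivers (\ref{huh2}) for all $s \geq 0$ for free.

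For the generation statement, let $\I'$ denote the ideal generated by (\ref{buh2}) and (\ref{huh2}) for $s = 1, \ldots, k_i$. Since $\I$ already contains (\ref{huh2}) for those $s$ by the previous paragraph, the inclusion $\I' \subseteq \I$ reduces to showing that (\ref{buh2}) itself lies in $\I$, and the reverse inclusion $\I \subseteq \I'$ then follows by symmetry (swap $\kappa \leftrightarrow \kappa'$, $E \leftrightarrow F$, $z \leftrightarrow z'$, $k_i \leftrightarrow k_i'$, and run the analogous argument starting from the generators of $\I'$). To extract (\ref{buh2}) from $\I$, I would pass the element obtained by applying $E_i$ to (\ref{buh1}) through the dual inversion isomorphism of (K5), which splits $E_i F_i 1_{\kappa+\kappa'}$ as $F_i E_i 1_{\kappa+\kappa'} \oplus 1_{\kappa+\kappa'}^{\oplus |h|}$ with $h := \langle h_i, \kappa+\kappa'\rangle = k_i - k_i'$. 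The $1^{\oplus|h|}$-projection consists of bubble identities already in $\I$ by the previous paragraph; the $F_i E_i 1_{\kappa+\kappa'}$-projection, once the established congruence $B'_i \equiv D'_i \pmod{\I}$ and the polynomial reciprocity $\phi'_i(t)\,D'_i(t) = c_{\kappa+\kappa';i}^{-1}\,\phi_i(t)$ from (\ref{vip}) are substituted, should collapse to the polynomial $\sum_r z'_{i,r} (x_i)^{k_i' - r}$ acting on the $E_i$-strand, which is exactly (\ref{buh2}).

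The main obstacle will be this last step: tracing the inverse of the dual inversion 2-morphism of (K5) carefully enough to see that the $F_i E_i$-summand, modulo the already-derived bubble contributions, matches (\ref{buh2}) on the nose rather than some adjacent polynomial. The two sign conventions in (K5) according to whether $h \geq 0$ or $h \leq 0$, the normalization constants $c_{\kappa+\kappa';i}$, and the negatively dotted bubbles of (K4) will all need careful bookkeeping.
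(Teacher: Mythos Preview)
Your power-series framework and the first two steps are right and match the paper: capping off (\ref{buh1}) yields $\phi_i(t)B_i(t)\equiv 0\pmod{\I}$ in high degrees, which combined with the imposed relations for $s\leq k_i'$ gives $B_i\equiv D_i\pmod{\I}$; then the infinite Grassmannian identity $B_iB_i'=1$ together with $D_iD_i'=1$ forces $B_i'\equiv D_i'\pmod{\I}$, establishing (\ref{huh1}) and (\ref{huh2}) for all $s\geq 0$. The symmetry argument for the reverse inclusion is also fine.

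The gap is in your extraction of (\ref{buh2}) from $\I$. Applying $E_i$ to (\ref{buh1}) produces an element of $\End(E_iF_i 1_{\kappa+\kappa'})$; conjugating by the (K5) isomorphism lands you in $\End\big(F_iE_i 1_{\kappa+\kappa'}\oplus 1_{\kappa+\kappa'}^{\oplus|h|}\big)$, and the ``$F_iE_i$-projection'' of that is an endomorphism of a \emph{two-strand} object, not of $E_i 1_{\kappa+\kappa'}$. It cannot literally collapse to the single dotted upward strand in (\ref{buh2}), so as written there is a type mismatch with no clear mechanism for resolving it. The paper instead uses the curl relation (K6), which is exactly the device that converts a downward-strand relation into an upward-strand relation with bubble corrections: bend (\ref{buh1}) into a right curl on $E_i 1_{\kappa+\kappa'}$ (this stays in $\I$ since $\I$ is invariant), then expand via (K6) to obtain
\[
0=-\sum_{r=0}^{k_i}\sum_{s=0}^{k_i'-r}\Big(\text{upward strand with $s$ dots}\Big)\cdot\Big(\text{clockwise bubble, }(k_i'-r-s)+*\Big)\cdot z_{i,r}
\]
in $\L(\kappa'|\kappa)$. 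Substituting the bubble values $\delta_{i,k_i'-r-s}$ already established and invoking the reciprocity $\phi_i(t)D_i(t)=c_{\kappa+\kappa';i}\phi_i'(t)$ from (\ref{vip}), this collapses directly to (\ref{buh2}). None of the sign-splitting on $h$ or inverse-matrix bookkeeping you anticipate is needed.
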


\begin{proof}
We first show that
the images of the elements (\ref{huh1}) are zero in
$\L(\kappa'|\kappa)$ for all $s \geq 0$.
For the induction step, we may assume that $s > k_i'$.
Note by the definition from (K7) that 
$$
\mathord{
\begin{tikzpicture}[baseline = 0]
  \draw[<-,thick,darkred] (0,0.4) to[out=180,in=90] (-.2,0.2);
  \draw[-,thick,darkred] (0.2,0.2) to[out=90,in=0] (0,.4);
 \draw[-,thick,darkred] (-.2,0.2) to[out=-90,in=180] (0,0);
  \draw[-,thick,darkred] (0,0) to[out=0,in=-90] (0.2,0.2);
 \node at (0,-.1) {$\scriptstyle{i}$};
   \node at (-0.65,0.2) {$\scriptstyle{\kappa+\kappa'}$};
   \node at (0.2,0.2) {$\color{darkred}\bullet$};
   \node at (0.6,0.2) {$\color{darkred}\scriptstyle{s+*}$};
\end{tikzpicture}
}
=
\mathord{
\begin{tikzpicture}[baseline = 0]
  \draw[<-,thick,darkred] (0,0.4) to[out=180,in=90] (-.2,0.2);
  \draw[-,thick,darkred] (0.2,0.2) to[out=90,in=0] (0,.4);
 \draw[-,thick,darkred] (-.2,0.2) to[out=-90,in=180] (0,0);
  \draw[-,thick,darkred] (0,0) to[out=0,in=-90] (0.2,0.2);
 \node at (0,-.1) {$\scriptstyle{i}$};
   \node at (-0.65,0.2) {$\scriptstyle{\kappa+\kappa'}$};
   \node at (0.2,0.2) {$\color{darkred}\bullet$};
   \node at (1.05,0.2) {$\color{darkred}\scriptstyle{s+k_i - k_i' -1}$};
\end{tikzpicture}
},
$$
so for $s > k_i'$ there are $\geq k_i$ dots on the right hand side
here.
Therefore using the relation (\ref{buh1}), we get that
$$
\sum_{r=0}^{k_i} 
\mathord{
\begin{tikzpicture}[baseline = 0]
  \draw[<-,thick,darkred] (0,0.4) to[out=180,in=90] (-.2,0.2);
  \draw[-,thick,darkred] (0.2,0.2) to[out=90,in=0] (0,.4);
 \draw[-,thick,darkred] (-.2,0.2) to[out=-90,in=180] (0,0);
  \draw[-,thick,darkred] (0,0) to[out=0,in=-90] (0.2,0.2);
 \node at (0,-.1) {$\scriptstyle{i}$};
   \node at (-0.65,0.2) {$\scriptstyle{\kappa+\kappa'}$};
   \node at (0.2,0.2) {$\color{darkred}\bullet$};
   \node at (0.8,0.2) {$\color{darkred}\scriptstyle{s-r+*}$};
\end{tikzpicture}
}
z_{i,r} = 0,
$$
working in the quotient $\L(\kappa'|\kappa)$.
Applying the inductive hypothesis, we deduce that
$$
\mathord{
\begin{tikzpicture}[baseline = 0]
  \draw[<-,thick,darkred] (0,0.4) to[out=180,in=90] (-.2,0.2);
  \draw[-,thick,darkred] (0.2,0.2) to[out=90,in=0] (0,.4);
 \draw[-,thick,darkred] (-.2,0.2) to[out=-90,in=180] (0,0);
  \draw[-,thick,darkred] (0,0) to[out=0,in=-90] (0.2,0.2);
 \node at (0,-.1) {$\scriptstyle{i}$};
   \node at (-0.65,0.2) {$\scriptstyle{\kappa+\kappa'}$};
   \node at (0.2,0.2) {$\color{darkred}\bullet$};
   \node at (0.6,0.2) {$\color{darkred}\scriptstyle{s+*}$};
\end{tikzpicture}
}
+\sum_{r=1}^{k_i} 
1_{\kappa+\kappa'}\delta_{i,s-r} z_{i,r}=0.
$$
It remains to observe that
$\sum_{r=0}^{k_i} \delta_{i,s-r} z_{i,r} = 0 $ already in $\k[z]$ when
$s > k_i'$.
This follows because $\big(\sum_{s \geq 0} \delta_{i,s} t^{s}\big)
\:\big(\sum_{r=0}^{k_i} z_{i,r}t^{r}\big)$ 
is a polynomial of
degree $k_i'$ by the definition (\ref{vip}).

Now let 
$\e(t) := \sum_{r \geq
  0} \e_r t^r$
and $\h(t) := \sum_{r \geq 0} \h_r t^r$, so that $\e(-t)\h(t) = 1$ by
(\ref{igr}); remember also the definitions (\ref{ee})--(\ref{hh}).
Also set 
$\delta_i(t) := \sum_{s \geq 0} \delta_{i,s} t^s$ and
$\delta'_i(t) := \sum_{s \geq 0} \delta'_{i,s} t^s$,
so that $\delta_i'(t) \delta_i(t)  = 1$ by (\ref{vip}).
In the previous paragraph, we have shown that the image of
$\beta_{\kappa+\kappa';i}(\h(t))$
is $c_{\kappa+\kappa';i}^{-1} 1_{\kappa+\kappa'}\delta_i(t) $.
Hence, the image of $\beta_{\kappa+\kappa';i}(\e(-t))$
is $c_{\kappa+\kappa';i} 1_{\kappa+\kappa'} \delta'_i(t)$. 
This shows that (\ref{huh2}) belongs to $\I$ for all $s \geq 0$.

In this paragraph we show that (\ref{buh2}) belongs to $\I$ too.
Working in $\L(\kappa'|\kappa)$ once again, we have by (\ref{buh1})
and (K6) that
$$
0=\sum_{r=0}^{k_i}
\mathord{
\begin{tikzpicture}[baseline = 0]
	\draw[<-,thick,darkred] (0,0.6) to (0,0.3);
	\draw[-,thick,darkred] (0,0.3) to [out=-90,in=180] (.3,-0.2);
	\draw[-,thick,darkred] (0.3,-0.2) to [out=0,in=-90](.5,0);
	\draw[-,thick,darkred] (0.5,0) to [out=90,in=0](.3,0.2);
	\draw[-,thick,darkred] (0.3,.2) to [out=180,in=90](0,-0.3);
	\draw[-,thick,darkred] (0,-0.3) to (0,-0.6);
   \node at (0.9,0) {$\color{darkred}\scriptstyle{k_i-r}$};
      \node at (0.5,0) {$\color{darkred}\bullet$};
   \node at (0,-.7) {$\scriptstyle{i}$};
   \node at (.7,-0.4) {$\scriptstyle{\kappa+\kappa'}$};
\end{tikzpicture}
}
z_{i,r}= -
\sum_{r= 0}^{k_i} \sum_{s=0}^{k_i'-r}
\mathord{
\begin{tikzpicture}[baseline = 3]
	\draw[->,thick,darkred] (0.08,-.25) to (0.08,.55);
     \node at (0.08,-.35) {$\scriptstyle{i}$};
   \node at (-0.3,0.1) {$\color{darkred}\scriptstyle{s}$};
      \node at (0.08,0.1) {$\color{darkred}\bullet$};
\end{tikzpicture}
}
\mathord{\begin{tikzpicture}[baseline = 0]
  \draw[<-,thick,darkred] (0,0.2) to[out=180,in=90] (-.2,0);
  \draw[-,thick,darkred] (0.2,0) to[out=90,in=0] (0,.2);
 \draw[-,thick,darkred] (-.2,0) to[out=-90,in=180] (0,-0.2);
  \draw[-,thick,darkred] (0,-0.2) to[out=0,in=-90] (0.2,0);
 \node at (0,-.28) {$\scriptstyle{i}$};
   \node at (-1,0) {$\color{darkred}\scriptstyle{k_i'-r-s+*}$};
      \node at (-.2,0) {$\color{darkred}\bullet$};
\end{tikzpicture}
}\:
{\scriptstyle\kappa+\kappa'}
\:\:\:z_{i,r}.
$$
Changing the summation using also (\ref{huh1}), we have shown that
$$
\sum_{r=0}^{k_i'} 
\mathord{
\begin{tikzpicture}[baseline = 3]
	\draw[->,thick,darkred] (0.08,-.25) to (0.08,.55);
     \node at (0.08,-.35) {$\scriptstyle{i}$};
   \node at (-0.4,0.1) {$\color{darkred}\scriptstyle{k_i'-r}$};
      \node at (0.08,0.1) {$\color{darkred}\bullet$};
\end{tikzpicture}
}
{\scriptstyle\kappa+\kappa'}
\Bigg(
\sum_{s= 0}^{r} 
\delta_{i,r-s}
\:z_{i,s}\Bigg) = 0.
$$
It remains to apply (\ref{vip}) to simplify this to (\ref{buh2}).

Conversely, one checks by similar arguments that the $\k[z]$-linear invariant ideal $\I'$ generated by the
elements (\ref{buh2})--(\ref{huh2})
contains (\ref{buh1})--(\ref{huh1}).
\end{proof}

\begin{lemma}\label{hom}
There is a unique $\g$-module homomorphism
\begin{equation*}
L(\kappa'|\kappa) \rightarrow
\mathbb{C}\otimes_{\Z}
K_0(\dotLm(\kappa'|\kappa)),
\qquad
\bar 1_{\kappa+\kappa'}
\mapsto [1_{\kappa+\kappa'}].
\end{equation*}
\end{lemma}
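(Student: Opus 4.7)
Proof proposal. Uniqueness is immediate: the class $[1_{\kappa+\kappa'}]$ lies in the weight $(\kappa+\kappa')$ piece, and by construction $L(\kappa'|\kappa) = \dot U(\g) 1_{\kappa+\kappa'} / \langle \cdots\rangle$ is cyclically generated by $\bar 1_{\kappa+\kappa'}$, so any $\g$-module homomorphism sending $\bar 1_{\kappa+\kappa'} \mapsto [1_{\kappa+\kappa'}]$ is forced on all of $L(\kappa'|\kappa)$.

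For existence, the $\g$-module structure on $\mathbb{C}\otimes_\Z K_0(\dotLm(\kappa'|\kappa))$ is supplied by Lemma~\ref{gg} (factoring through the surjection $\gamma$ of~(\ref{gamma})). The definition~(\ref{mid}) of $L(\kappa'|\kappa)$ then makes it clear that it suffices to verify the two identities
\begin{equation*}
e_i^{1+k_i'}\, [1_{\kappa+\kappa'}] = 0, \qquad f_i^{1+k_i}\, [1_{\kappa+\kappa'}] = 0
\end{equation*}
in $K_0(\dotLm(\kappa'|\kappa))$ for every $i \in I$, where $k_i := \langle h_i,\kappa\rangle$ and $k_i' := -\langle h_i,\kappa'\rangle$.

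To establish these, I will pass to the minimal specialization and exploit the nilpotency of dots. In $\Lm(\kappa'|\kappa) = \L(\kappa'|\kappa)\otimes_{\k[z]}\k$, all $z_{i,r}$ and $z_{i,r}'$ with $r>0$ are sent to zero while $z_{i,0} = z_{i,0}' = 1$. Hence relation~(\ref{buh1}) specializes to $(x_i')^{k_i} = 0$ on $F_i 1_{\kappa+\kappa'}$, and relation~(\ref{buh2}) from Lemma~\ref{new} specializes to $(x_i)^{k_i'} = 0$ on $E_i 1_{\kappa+\kappa'}$. Applying Lemma~\ref{proud} (for $E_i$, and its evident analogue for $F_i$ obtained via the negative quiver Hecke relations (K1) and Remark~\ref{2repb}) to the object $1_{\kappa+\kappa'}$, we conclude that $E_i^{(1+k_i')} 1_{\kappa+\kappa'} = 0$ and $F_i^{(1+k_i)} 1_{\kappa+\kappa'} = 0$ in $\dotLm(\kappa'|\kappa)$. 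By Lemma~\ref{divpower}, $E_i^{1+k_i'} 1_{\kappa+\kappa'}$ and $F_i^{1+k_i} 1_{\kappa+\kappa'}$ are isomorphic to direct sums of many copies of these zero objects, hence themselves are zero; their classes in $K_0$ vanish, which is what was needed.

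The main (minor) obstacle is bookkeeping: one must be careful that the specialization of~(\ref{buh1}) really produces a \emph{monic} polynomial of degree $k_i$ in the dot on $F_i 1_{\kappa+\kappa'}$ (so that Lemma~\ref{proud} applies). This is precisely the point of the normalization $z_{i,0}=z_{i,0}' = 1$, which ensures that the top-degree coefficient is $1$ after specializing the lower $z_{i,r}$ to $0$; similarly for~(\ref{buh2}). Beyond this, everything is a direct assembly of the results already in place.
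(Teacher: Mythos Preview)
Your proof is correct and follows essentially the same approach as the paper: show that the map from $\dot U(\g)1_{\kappa+\kappa'}$ factors through $L(\kappa'|\kappa)$ by verifying $E_i^{(1+k_i')}1_{\kappa+\kappa'}=0$ and $F_i^{(1+k_i)}1_{\kappa+\kappa'}=0$ in $\dotLm(\kappa'|\kappa)$ via the specialized relations (\ref{buh2}) and (\ref{buh1}) together with Lemma~\ref{proud} (and its rotated analogue for $F_i$). Your write-up is slightly more detailed than the paper's, spelling out uniqueness, the appeal to Lemma~\ref{divpower}, and the monicity check, but the argument is the same.
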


\begin{proof}
We need to show that the homomorphism
$\dot{U}(\g) 1_{\kappa+\kappa'} \rightarrow
\mathbb{C}\otimes_{\Z}
K_0(\dotLm(\kappa'|\kappa))$ sending $1_{\kappa+\kappa'} \mapsto
[1_{\kappa+\kappa'}]$
factors through the quotient $L(\kappa'|\kappa)$ from
(\ref{mid}).
This follows because $E_i^{(1+k_i')} 1_{\kappa+\kappa'} = 0 =
F_i^{(1+k_i)} 1_{\kappa+\kappa'}$
in $\dotLm(\kappa'|\kappa)$.
The first equality here follows from Lemma~\ref{proud} and the
defining relation (\ref{buh2});
the second one follows similarly using (\ref{buh1}) and a rotated
version of Lemma~\ref{proud}.
\end{proof}

\begin{lemma}\label{nilp}
The 2-representation
$\Lm(\kappa'|\kappa)$ is 
{\em nilpotent} in the sense that $(x_i)_u
\in \End_{\Lm(\kappa'|\kappa)}(E_i u)$
and $(x_i')_u
\in \End_{\Lm(\kappa'|\kappa)}(F_i u)$ are
nilpotent for all $i \in I$ and $u \in \ob \Lm(\kappa'|\kappa)$.
\end{lemma}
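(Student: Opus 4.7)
The plan is to prove nilpotency of $(x_i)_u$ by induction on the length $n$ of $u$, presented as a word $u = G_n \cdots G_1 \, 1_{\kappa+\kappa'}$ with each $G_k \in \{E_j, F_j : j \in I\}$; the argument for $(x_i')_u$ is entirely symmetric, substituting (\ref{buh1}) and (\ref{huh1}) for (\ref{buh2}) and (\ref{huh2}) via Lemma~\ref{new}.

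For the base case $n = 0$, $u = 1_{\kappa+\kappa'}$, Lemma~\ref{new} places (\ref{buh2}) in the invariant ideal $\I$, and the minimal specialization $z'_{i,r} \mapsto \delta_{r,0}$ collapses this relation to $(x_i)^{k'_i} = 0$ in $\End_{\Lm(\kappa'|\kappa)}(E_i 1_{\kappa+\kappa'})$, so $(x_i)_{1_{\kappa+\kappa'}}$ is nilpotent of index at most $k'_i$.

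For the inductive step, write $u = Gv$ with $G \in \{E_j, F_j\}$ and $v$ of length $n-1$. The natural transformation $(x_i)_u$ is diagrammatically the dot on the leftmost strand of $E_i G v$, and the strategy is to use the relations of $\UU(\g)$ (which hold identically in $\Lm(\kappa'|\kappa)$) to rewrite a sufficiently high power of this dot in a form where only dots acting on the $v$-portion of the diagram remain, so that the inductive hypothesis applies. When $G = E_j$, iterated application of the positive quiver Hecke relation moves the dot across the $E_j$-strand, with controllable error terms governed by the nil-Hecke relations (in particular $\tau_{ii}^2 = 0$ limits the accumulation of crossings when $i = j$). When $G = F_j$, the dual inversion relations from (K5) give an isomorphism in the additive Karoubi envelope $\dotLm(\kappa'|\kappa)$ decomposing $E_i F_j v$ as $F_j E_i v$ (when $i \neq j$) or as $F_i E_i v \oplus v^{\oplus |\langle h_i, \lambda \rangle|}$ (when $i = j$, with the $v$-summands appearing on whichever side is determined by the sign of $\langle h_i,\lambda\rangle$); on the first summand $(x_i)_u$ corresponds to $F_j (x_i)_v$, nilpotent by induction, while on each $v$-summand it acts as multiplication by a bubble $\beta_{\lambda;i}(\h_n)$ or $\beta_{\lambda;i}(\e_n)$.

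The main obstacle is controlling the bubble contributions in the $G = F_j$ case: we must show that the bubbles $\beta_{\lambda;i}(\h_n), \beta_{\lambda;i}(\e_n) \in \End_{\UU(\g)}(1_\lambda)$ act nilpotently as endomorphisms of $v$ for each intermediate weight $\lambda$. This is handled by a secondary propagation: at weight $\kappa+\kappa'$, the base-case relations (\ref{huh1}) and (\ref{huh2}) vanish all positive-degree bubbles under the minimal specialization, and the bubble slide relations (K7) then transport these vanishings to arbitrary weights $\lambda$ along the 1-morphism path through $v$. Once bubble nilpotency is in hand, the primary induction closes and nilpotency of $(x_i)_u$ for all $u$ follows.
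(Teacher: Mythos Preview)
Your inductive framework and base case are correct and match the paper. The $G=E_j$ case is also fine in spirit, though the paper is more explicit: for $j=i$ it uses the intertwiner $\Sigma:=\tau_{ii}(E_ix_i)-(x_iE_i)\tau_{ii}$, which satisfies $\Sigma^2=1$ and conjugates the outer dot to the inner one exactly, rather than appealing to ``controllable error terms.''

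The genuine gap is in the $G=F_j$ case when $i=j$. Your assertion that under the inversion isomorphism $E_iF_i v\cong F_iE_iv\oplus v^{\oplus|h|}$ the operator $(x_i)_u$ acts as $F_i(x_i)_v$ on the first summand and as bubble-multiplication on each $v$-summand is not correct: the conjugated action is not block-diagonal. Concretely, writing $\rho=(\sigma,\eps_i,\eps_i x,\dots,\eps_i x^{h-1})$ for the inversion map (say $h\geq 0$) and using the dot-sliding identity $\sigma\circ(x_iF_i)=(F_ix_i)\circ\sigma-\eta_i\circ\eps_i$, one sees immediately a nonzero off-block entry $-\eta_i:v\to F_iE_iv$; and the row coming from $\eps_i\circ(x_iF_i)^{h}$ contributes further mixing between $F_iE_iv$ and the $v$-copies. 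So even if you establish bubble nilpotency via (K7), the matrix you must actually analyse is not the one you describe, and nilpotency does not follow from your stated decomposition.

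The paper handles this case quite differently: it applies the first relation of (K8) and the associated dot-sliding to push $(x_iF_j)^N$ across the $F_j$-strand directly, with no appeal to the inversion decomposition or bubble slides. For $i\neq j$ the alternating crossings are mutually inverse and dots commute with them, so $(x_iF_j)^N=\sigma'(F_jx_i)^N\sigma$ vanishes once $(x_i)_v^N=0$. For $i=j$, the correction terms that arise are dotted caps $\eps_i\circ(x_iF_i)^k$, which by the cyclicity relation (K3) equal $\eps_i\circ(E_ix_i')^k=\eps_i\circ E_i\!\big((x_i')_v^k\big)$; these die for large $k$ by the inductive hypothesis on $(x_i')_v$. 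Thus the paper's inductions for $(x_i)_u$ and $(x_i')_u$ are run simultaneously, and it is precisely this that makes Case~3 go through cleanly without any bubble-slide argument.
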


\begin{proof}
We show by induction on $r$ that
$(x_i)_u$ is nilpotent for
any object $u$ that is a monomial obtained by applying $r$ of the
generating $E$'s and $F$'s to $1_{\kappa+\kappa'}$;
similar arguments give the nilpotency of $(x_i')_u$ too.
The base case $r=0$ follows from (\ref{buh1}) and (\ref{buh2}), since
they show that
$
\mathord{
\begin{tikzpicture}[baseline = 2]
	\draw[<-,darkred,thick] (0.08,-.15) to (0.08,.3);
      \node at (0.08,0.15) {\color{darkred}$\bullet$};
   \node at (0.1,.4) {$\scriptstyle{i}$};
   \node at (-0.2,0.1) {\color{darkred}$\scriptstyle{k_i}$};
\end{tikzpicture}
}
{\scriptstyle\kappa+\kappa'}
=\mathord{
\begin{tikzpicture}[baseline = -2]
	\draw[->,darkred,thick] (0.08,-.15) to (0.08,.3);
      \node at (0.08,0.05) {\color{darkred}$\bullet$};
   \node at (0.1,-.25) {$\scriptstyle{i}$};
   \node at (-0.2,0) {\color{darkred}$\scriptstyle{k'_i}$};
\end{tikzpicture}
}
{\scriptstyle\kappa+\kappa'}
=0$
in $\Lm(\kappa'|\kappa)$.
For the induction step, we consider $(x_i)_u$ for a monomial $u$ of length $(r+1)$.
There are three cases:

\vspace{2mm}

\noindent
{\em Case one:  $u = E_i 1_\lambda v$.}
Introduce the {\em intertwiner}
$
\mathord{
\begin{tikzpicture}[baseline = -2]
	\draw[-,thick,darkred] (0.1,.05) to (-0.1,0.05);
	\draw[-,thick,darkred] (0.18,-.15) to (0.1,0.05);
	\draw[-,thick,darkred] (-0.18,-.15) to (-0.1,0.05);
	\draw[->,thick,darkred] (.1,.05) to (0.18,.3);
	\draw[->,thick,darkred] (-0.1,.05) to (-0.18,.3);
   \node at (-0.18,-.25) {$\scriptstyle{i}$};
   \node at (0.18,-.25) {$\scriptstyle{i}$};
\end{tikzpicture}
}
{\scriptstyle\lambda}
:=
\mathord{
\begin{tikzpicture}[baseline = -2]
	\draw[->,thick,darkred] (0.18,-.15) to (-0.18,.3);
	\draw[->,thick,darkred] (-0.18,-.15) to (0.18,.3);
   \node at (-0.18,-.25) {$\scriptstyle{i}$};
   \node at (0.18,-.25) {$\scriptstyle{i}$};
   \node at (0.084,.16) {$\color{darkred}\bullet$};
\end{tikzpicture}
}
{\scriptstyle\lambda}
-
\mathord{
\begin{tikzpicture}[baseline = -2]
	\draw[->,thick,darkred] (0.18,-.15) to (-0.18,.3);
	\draw[->,thick,darkred] (-0.18,-.15) to (0.18,.3);
   \node at (-0.18,-.25) {$\scriptstyle{i}$};
   \node at (0.14,-.25) {$\scriptstyle{i}$};
   \node at (0.09,-.05) {$\color{darkred}\bullet$};
\end{tikzpicture}
}
{\scriptstyle\lambda}.$
Using the relations, one checks 
that
$
\mathord{
\begin{tikzpicture}[baseline = -7]
	\draw[-,thick,darkred] (0.1,.05) to (-0.1,0.05);
	\draw[-,thick,darkred] (0.18,-.15) to [out=90,in=-70](0.1,0.05);
	\draw[-,thick,darkred] (-0.18,-.15) to [out=90,in=-110](-0.1,0.05);
	\draw[->,thick,darkred] (.1,.05) to (0.18,.3);
	\draw[->,thick,darkred] (-0.1,.05) to (-0.18,.3);
	\draw[-,thick,darkred] (0.1,.-.38) to (-0.1,-.38);
	\draw[-,thick,darkred] (0.18,-.6) to (0.1,-.38);
	\draw[-,thick,darkred] (-0.18,-.6) to (-0.1,-.38);
	\draw[-,thick,darkred] (.1,-.38) to [in=-90,out=70](0.18,-.15);
	\draw[-,thick,darkred] (-0.1,-.38) to [in=-90,out=110](-0.18,-.15);
   \node at (-0.18,-.7) {$\scriptstyle{i}$};
   \node at (0.18,-.7) {$\scriptstyle{i}$};
\end{tikzpicture}
}
{\scriptstyle\lambda}
=
\mathord{
\begin{tikzpicture}[baseline = -2]
	\draw[->,thick,darkred] (0.13,-.15) to (0.13,.3);
	\draw[->,thick,darkred] (-0.13,-.15) to (-0.13,.3);
   \node at (-0.13,-.25) {$\scriptstyle{i}$};
   \node at (0.13,-.25) {$\scriptstyle{i}$};
\end{tikzpicture}
}
{\scriptstyle\lambda}$ and
$\mathord{
\begin{tikzpicture}[baseline = -7]
	\draw[-,thick,darkred] (0.1,.05) to (-0.1,0.05);
	\draw[-,thick,darkred] (0.18,-.15) to [out=90,in=-70](0.1,0.05);
	\draw[-,thick,darkred] (-0.18,-.15) to [out=90,in=-110](-0.1,0.05);
	\draw[->,thick,darkred] (.1,.05) to (0.18,.3);
	\draw[->,thick,darkred] (-0.1,.05) to (-0.18,.3);
	\draw[-,thick,darkred] (0.1,.-.38) to (-0.1,-.38);
	\draw[-,thick,darkred] (0.18,-.6) to (0.1,-.38);
	\draw[-,thick,darkred] (-0.18,-.6) to (-0.1,-.38);
	\draw[-,thick,darkred] (.1,-.38) to [in=-90,out=70](0.18,-.15);
	\draw[-,thick,darkred] (-0.1,-.38) to [in=-90,out=110](-0.18,-.15);
   \node at (-0.18,-.7) {$\scriptstyle{i}$};
   \node at (0.18,-.7) {$\scriptstyle{i}$};
   \node at (0.18,-.15) {$\color{darkred}\bullet$};
\end{tikzpicture}
}
{\scriptstyle\lambda}
=
\mathord{
\begin{tikzpicture}[baseline = -2]
	\draw[->,thick,darkred] (0.13,-.15) to (0.13,.3);
	\draw[->,thick,darkred] (-0.13,-.15) to (-0.13,.3);
   \node at (-0.13,-.25) {$\scriptstyle{i}$};
   \node at (0.13,-.25) {$\scriptstyle{i}$};
   \node at (-0.12,.03) {$\color{darkred}\bullet$};
\end{tikzpicture}
}
{\scriptstyle\lambda}\,$.
Hence,
$\mathord{
\begin{tikzpicture}[baseline = -7]
	\draw[-,thick,darkred] (0.1,.05) to (-0.1,0.05);
	\draw[-,thick,darkred] (0.18,-.15) to [out=90,in=-70](0.1,0.05);
	\draw[-,thick,darkred] (-0.18,-.15) to [out=90,in=-110](-0.1,0.05);
	\draw[->,thick,darkred] (.1,.05) to (0.18,.3);
	\draw[->,thick,darkred] (-0.1,.05) to (-0.18,.3);
	\draw[-,thick,darkred] (0.1,.-.38) to (-0.1,-.38);
	\draw[-,thick,darkred] (0.18,-.6) to (0.1,-.38);
	\draw[-,thick,darkred] (-0.18,-.6) to (-0.1,-.38);
	\draw[-,thick,darkred] (.1,-.38) to [in=-90,out=70](0.18,-.15);
	\draw[-,thick,darkred] (-0.1,-.38) to [in=-90,out=110](-0.18,-.15);
   \node at (-0.18,-.7) {$\scriptstyle{i}$};
   \node at (0.18,-.7) {$\scriptstyle{i}$};
   \node at (0.18,-.15) {$\color{darkred}\bullet$};
   \node at (0.38,-0.15) {\color{darkred}$\scriptstyle{n}$};
\end{tikzpicture}
}
{\scriptstyle\lambda}
=
\mathord{
\begin{tikzpicture}[baseline = -2]
	\draw[->,thick,darkred] (0.13,-.15) to (0.13,.3);
	\draw[->,thick,darkred] (-0.13,-.15) to (-0.13,.3);
   \node at (-0.13,-.25) {$\scriptstyle{i}$};
   \node at (0.13,-.25) {$\scriptstyle{i}$};
   \node at (-0.12,.03) {$\color{darkred}\bullet$};
   \node at (-0.32,0.03) {\color{darkred}$\scriptstyle{n}$};
\end{tikzpicture}
}
{\scriptstyle\lambda}\,$.
As $(x_i^n)_v \in \End_{\Lm(\kappa'|\kappa)}(E_i v)$ is zero for some $n$ by
induction, we deduce that $(x_i^n)_u
\in \End_{\Lm(\kappa'|\kappa)}(E_i u)$ is zero too.

\vspace{2mm}

\noindent
{\em Case two: $u = E_j 1_\lambda v$ for $j
\neq i$.}
By induction, we have that $(x_i^n)_v = (x_j^n)_v = 0$ for some $n$.
Let $m := (d_{ij}+1) n$.
Then we can use the defining relations to rewrite
$\mathord{
\begin{tikzpicture}[baseline = -2]
	\draw[->,thick,darkred] (0.13,-.15) to (0.13,.3);
	\draw[->,thick,darkred] (-0.13,-.15) to (-0.13,.3);
   \node at (-0.13,-.25) {$\scriptstyle{i}$};
   \node at (0.13,-.25) {$\scriptstyle{j}$};
   \node at (-0.12,.03) {$\color{darkred}\bullet$};
   \node at (-0.34,0.03) {\color{darkred}$\scriptstyle{m}$};
   \node at (0.4,0) {$\scriptstyle{\lambda}$};
\end{tikzpicture}
}$
as a linear combination of
$\mathord{
\begin{tikzpicture}[baseline = -3]
	\draw[->,thick,darkred] (0.14,0) to[out=90,in=-90] (-0.18,.4);
	\draw[->,thick,darkred] (-0.14,0) to[out=90,in=-90] (0.18,.4);
	\draw[-,thick,darkred] (0.18,-.4) to[out=90,in=-90] (-0.14,0);
	\draw[-,thick,darkred] (-0.18,-.4) to[out=90,in=-90] (0.14,0);
  \node at (-0.18,-.5) {$\scriptstyle{i}$};
  \node at (0.18,-.5) {$\scriptstyle{j}$};
   \node at (0.62,0) {$\scriptstyle{\lambda}$};
   \node at (0.18,0) {$\color{darkred}\bullet$};
   \node at (0.38,0) {\color{darkred}$\scriptstyle{m}$};
\end{tikzpicture}
}$
and terms of the form 
$\mathord{
\begin{tikzpicture}[baseline = -2]
	\draw[->,thick,darkred] (0.13,-.15) to (0.13,.3);
	\draw[->,thick,darkred] (-0.13,-.15) to (-0.13,.3);
   \node at (-0.13,-.25) {$\scriptstyle{i}$};
   \node at (0.13,-.25) {$\scriptstyle{j}$};
   \node at (-0.12,.03) {$\color{darkred}\bullet$};
   \node at (-0.32,0.03) {\color{darkred}$\scriptstyle{p}$};
   \node at (0.12,.03) {$\color{darkred}\bullet$};
   \node at (0.32,0.03) {\color{darkred}$\scriptstyle{q}$};
   \node at (0.6,0) {$\scriptstyle{\lambda}$};
\end{tikzpicture}
}$
with $p \geq n d_{ij}$ and $q \geq 1$.
Repeating the calculation $(n-1)$ more times, we get that
$\mathord{
\begin{tikzpicture}[baseline = -2]
	\draw[->,thick,darkred] (0.13,-.15) to (0.13,.3);
	\draw[->,thick,darkred] (-0.13,-.15) to (-0.13,.3);
   \node at (-0.13,-.25) {$\scriptstyle{i}$};
   \node at (0.13,-.25) {$\scriptstyle{j}$};
   \node at (-0.12,.03) {$\color{darkred}\bullet$};
   \node at (-0.34,0.03) {\color{darkred}$\scriptstyle{m}$};
   \node at (0.4,0) {$\scriptstyle{\lambda}$};
\end{tikzpicture}
}$
is a linear combination of
$\mathord{
\begin{tikzpicture}[baseline = -3]
	\draw[->,thick,darkred] (0.14,0) to[out=90,in=-90] (-0.18,.4);
	\draw[->,thick,darkred] (-0.14,0) to[out=90,in=-90] (0.18,.4);
	\draw[-,thick,darkred] (0.18,-.4) to[out=90,in=-90] (-0.14,0);
	\draw[-,thick,darkred] (-0.18,-.4) to[out=90,in=-90] (0.14,0);
  \node at (-0.18,-.5) {$\scriptstyle{i}$};
  \node at (0.18,-.5) {$\scriptstyle{j}$};
   \node at (0.62,0) {$\scriptstyle{\lambda}$};
   \node at (0.14,0) {$\color{darkred}\bullet$};
   \node at (0.34,0) {\color{darkred}$\scriptstyle{r}$};
   \node at (0.13,-.28) {$\color{darkred}\bullet$};
   \node at (0.33,-.28) {\color{darkred}$\scriptstyle{s}$};
\end{tikzpicture}
}$
and terms of the form 
$\mathord{
\begin{tikzpicture}[baseline = -2]
	\draw[->,thick,darkred] (0.13,-.15) to (0.13,.3);
	\draw[->,thick,darkred] (-0.13,-.15) to (-0.13,.3);
   \node at (-0.13,-.25) {$\scriptstyle{i}$};
   \node at (0.13,-.25) {$\scriptstyle{j}$};
   \node at (-0.12,.03) {$\color{darkred}\bullet$};
   \node at (-0.32,0.03) {\color{darkred}$\scriptstyle{p}$};
   \node at (0.12,.03) {$\color{darkred}\bullet$};
   \node at (0.32,0.03) {\color{darkred}$\scriptstyle{q}$};
   \node at (0.6,0) {$\scriptstyle{\lambda}$};
\end{tikzpicture}
}$
with $r \geq n, s \geq 0, p \geq d_{ij}$ and $q \geq n$.
It remains to act on $v$. All the terms vanish
and we have shown that $(x_i^m)_u = 0$.

\vspace{2mm}

\noindent
{\em Case three: $u = F_j 1_\lambda v$.}
Use the first alternating crossing relation from (K8) to move dots
to the right in a similar way.
\end{proof}

\begin{corollary}\label{fdaty}
$\Lm(\kappa'|\kappa)$ is 
a finite-dimensional category.
\end{corollary}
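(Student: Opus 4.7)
The plan is to show that every Hom-space in $\Lm(\kappa'|\kappa)$ is finite-dimensional. Since each object of the 2-representation $\Lm(\kappa'|\kappa)$ is (a summand of) a composition of the generating 1-morphisms $E_i, F_j$ applied to $1_{\kappa+\kappa'}$, and since all such generators are biadjoint in $\UU(\g)$ (cf.\ (K2) and the ambidextrous normalization of $\S$\ref{ji}), every such monomial $u$ has a two-sided dual $u^\#$ obtained by reading the word in reverse and swapping $E \leftrightarrow F$. The biadjunctions yield natural isomorphisms $\Hom(u, v) \cong \Hom(1_{\kappa+\kappa'}, u^\# v)$, which reduce the problem to showing $\Hom_{\Lm(\kappa'|\kappa)}(1_{\kappa+\kappa'}, w)$ is finite-dimensional for every monomial $w$ representing an endomorphism of $\kappa+\kappa'$ in $\UU(\g)$.

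For this, I would invoke the Khovanov--Lauda spanning set of \cite[Proposition 3.11]{KL3} for $\Hom_{\UU(\g)}(1_{\kappa+\kappa'}, w)$. Schematically, this spanning set is indexed by triples consisting of (a) a matching of the incoming with the outgoing strands (only finitely many, at most $n!$ where $n$ is the total number of strands in $w$); (b) a tuple of nonnegative integers recording the number of dots on each strand; (c) a product of bubbles placed in the rightmost region, living in the bubble algebra $\End_{\UU(\g)}(1_{\kappa+\kappa'})$, which by (K4) is a quotient of $\bigotimes_{i \in I} \Sym$. Passing to the quotient $\Lm(\kappa'|\kappa)$, the dots become nilpotent by Lemma~\ref{nilp}, bounding the data (b). For (c), the relations (\ref{huh1}) together with their analogs (\ref{huh2}) from Lemma~\ref{new} identify every positively dotted bubble in the rightmost region with the specific element $c_{\kappa+\kappa';i}^{-1}\delta_{i,s}\cdot 1_{\kappa+\kappa'}$ (and similarly for the other family of generators of $\bigotimes_i \Sym$). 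After specializing the central parameters $z_{i,r}, z'_{i,r}$ to $0$, inspection of (\ref{vip}) shows that all these scalars vanish for $s \geq 1$, so the bubble algebra collapses to $\k\cdot 1_{\kappa+\kappa'}$. Combined with the finiteness in (a) and (b), this forces $\Hom_{\Lm(\kappa'|\kappa)}(1_{\kappa+\kappa'}, w)$ to be finite-dimensional.

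The main obstacle is the bookkeeping for (b): Lemma~\ref{nilp} asserts nilpotency of each dot on each fixed object but does not a priori yield a single nilpotency exponent uniform across all strands occurring in diagrams contributing to a given Hom-space. To bypass this, one argues by induction on the length of $w$, as in the proof of Lemma~\ref{nilp}, that a common upper bound on the nilpotency index exists which depends only on $w$. A related subtlety is that the Khovanov--Lauda spanning set requires all bubbles to sit in the rightmost region; one must invoke the bubble-slide relations (K7) to push bubbles there, and verify that the resulting error terms do not spoil the degree-counting argument.
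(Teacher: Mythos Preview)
Your proposal is correct and follows essentially the same strategy as the paper: invoke the Khovanov--Lauda spanning set of \cite[Proposition 3.11]{KL3}, then observe that in the quotient $\Lm(\kappa'|\kappa)$ the bubbles in the rightmost region collapse to scalars (via (\ref{huh1})--(\ref{huh2}) and Lemma~\ref{new}, which become $0$ after specialization) and the dots are nilpotent (Lemma~\ref{nilp}).

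Two minor points where you are being overcautious. First, the adjunction reduction to $\Hom(1_{\kappa+\kappa'}, w)$ is unnecessary: the paper applies the Khovanov--Lauda spanning set directly to $\Hom_{\UU(\g)}(u,v)$ for arbitrary $u,v$, which is how \cite[Proposition 3.11]{KL3} is stated. Second, your worry about a uniform nilpotency exponent is a non-issue. For a \emph{fixed} pair $u,v$ the spanning diagrams have a fixed finite number of strands, and each strand carries its dots at a fixed marked point; Lemma~\ref{nilp} gives a nilpotency bound for the dot at that particular location (since it is $(x_i)_{u'}$ or $(x_i')_{u'}$ for a specific object $u'$ determined by $u,v$ and the strand). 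No uniformity across different Hom-spaces is required. Similarly, the bubble-slide concern is moot: the spanning set of \cite[Proposition 3.11]{KL3} is already arranged with all bubbles in the rightmost region, so no further sliding is needed.
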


\begin{proof}
We take $u, v \in \ob 
\Lm(\kappa'|\kappa)_\lambda$, i.e. 1-morphisms
$\kappa+\kappa' \rightarrow \lambda$ in $\UU(\g)$ for some
$\lambda \in P$, and 
consider the explicit spanning set for
$\Hom_{\UU(\g)}(u,v)$ constructed as in
\cite[Proposition 3.11]{KL3}; we arrange this so that all the dotted
bubbles appear at the right hand side of the diagrams.
We need to show that the image of this set 
spans a finite-dimensional vector space
when we pass to the quotient $\Lm(\kappa'|\kappa)$.
This is clear because any diagram with a bubble vanishes in the
quotient by the relations (\ref{huh1}) and (\ref{huh2}), and any diagram with too many dots
on any given strand vanishes by the nilpotency established in
Lemma~\ref{nilp}.
\end{proof}

\begin{remark}\label{fd}
\rm
The category $\L(\kappa'|\kappa)$ is not finite-dimensional. 
However, 
each of its morphism spaces 
is {\em finitely generated} as a $\k[z]$-module.
This follows 
by a similar but more delicate inductive argument compared to the proof of Corollary~\ref{fdaty}.
\end{remark}

Taking $\kappa' = 0$ in Construction~\ref{bad}, we obtain the $\k[z]$-linear 2-representation
\begin{equation}
\L(\kappa) := \L(0|\kappa).
\end{equation}
Note in this situation
that $\k[z] = \k[z_{i,r}\:|\:i\in I, r=1,\dots,k_i]$ and
$z_{i,r} = c_{\kappa;i} \delta_{i,r}'$
for $r=1,\dots,k_i$.
Since each $k_i' = 0$, Lemma~\ref{new} shows that $\L(\kappa)$ is the quotient of
$\RR := \RR(\kappa)\otimes_\k \k[z]$ by the $\k[z]$-linear
invariant ideal generated by the morphisms
\begin{align}\label{poll1}
&\mathord{
\begin{tikzpicture}[baseline = -2]
	\draw[->,darkred,thick] (0.08,-.3) to (0.08,.4);
   \node at (0.1,-.4) {$\scriptstyle{i}$};
\end{tikzpicture}
}\!
{\scriptstyle\kappa},\\
\label{poll2}
1_{\kappa} z_{i,r} 
&- 
c_{\kappa;i} 
\mathord{
\begin{tikzpicture}[baseline = 0]
  \draw[-,thick,darkred] (0,0.4) to[out=180,in=90] (-.2,0.2);
  \draw[->,thick,darkred] (0.2,0.2) to[out=90,in=0] (0,.4);
 \draw[-,thick,darkred] (-.2,0.2) to[out=-90,in=180] (0,0);
  \draw[-,thick,darkred] (0,0) to[out=0,in=-90] (0.2,0.2);
 \node at (0,-.1) {$\scriptstyle{i}$};
   \node at (-0.4,0.2) {$\scriptstyle{\kappa}$};
   \node at (0.2,0.2) {$\color{darkred}\bullet$};
   \node at (0.6,0.2) {$\color{darkred}\scriptstyle{r+*}$};
\end{tikzpicture}
},
\end{align}
for $i \in I$ and $r = 1,\dots,k_i$.
In view of (\ref{poll2}), there is no need to extend scalars
to $\k[z]$ after all: we
could equivalently define 
$\L(\kappa)$ to be the quotient of $\RR(\kappa)$ by the invariant ideal
generated by the morphisms
(\ref{poll1})
for all $i \in I$, viewing it as a $\k[z]$-linear 2-representation
so that each $z_{i,r}$ acts 
by horizontally composing on the right with
$
c_{\kappa;i} 
\mathord{
\begin{tikzpicture}[baseline = 2]
  \draw[-,thick,darkred] (0,0.4) to[out=180,in=90] (-.2,0.2);
  \draw[->,thick,darkred] (0.2,0.2) to[out=90,in=0] (0,.4);
 \draw[-,thick,darkred] (-.2,0.2) to[out=-90,in=180] (0,0);
  \draw[-,thick,darkred] (0,0) to[out=0,in=-90] (0.2,0.2);
 \node at (0,-.1) {$\scriptstyle{i}$};
   \node at (-0.4,0.2) {$\scriptstyle{\kappa}$};
   \node at (0.2,0.2) {$\color{darkred}\bullet$};
   \node at (0.6,0.2) {$\color{darkred}\scriptstyle{r+*}$};
\end{tikzpicture}
}$.

The discussion in the previous paragraph shows that $\L(\kappa)$ is Rouquier's {\em universal categorification} of
$L(\kappa)$ from 
\cite[$\S$4.3.3]{R2}.
These 2-representations play a fundamental role
in his general structure theory for upper integrable
2-representations.
To start with, they formally satisfy the following universal property:
for any 2-representation $\V$, let 
$$
\V_\kappa^{\hw} := \{V \in \ob
\V_\kappa\:|\:E_i V = 0\text{ for all }i \in I\}
$$
which is a full subcategory of $\V_\kappa$;
then there is an equivalence of categories
\begin{equation}\label{up}
\operatorname{ev}_{1_\kappa}:\mathcal{H}om_{\UU(\g)}(\L(\kappa), \V)
\rightarrow
\V^\hw_\kappa,
\qquad
G \mapsto G 1_\kappa,
\quad
\pi \mapsto \pi_{1_\kappa}.
\quad
\end{equation}
This is a key ingredient in \cite[Theorem 4.22]{R2}, which shows that any
upper integrable, additive, idempotent-complete
2-representation has a finite filtration whose
sections are specializations of the $\dot\L(\kappa)$'s
ordered in some way refining the dominance order (most dominant at the bottom). This result is a categorical analog of the
filtration of a based module constructed by Lusztig in
\cite[Ch. 27]{Lubook}.

Using a fundamental theorem of Kang and Kashiwara \cite[Theorem 5.2]{KK},
Rouquier has also given an equivalent realization of $\dot\L(\kappa)$
as follows.
Recalling Definition~\ref{pd}, introduce the (negative) {\em cyclotomic quiver Hecke category}
$\H'(\kappa)$, namely, the quotient of the $\k[z]$-linear monoidal category
$\H' \otimes_\k \k[z]$ by the $\k[z]$-linear left tensor ideal generated by the morphisms
\begin{equation}
\sum_{r=0}^{k_i}
\bigg(\mathord{
\begin{tikzpicture}[baseline = 2]
	\draw[<-,darkred,thick] (0.08,-.3) to (0.08,.4);
      \node at (0.08,0.1) {$\color{darkred}\bullet$};
   \node at (0.1,.5) {$\scriptstyle{i}$};
   \node at (-0.36,.1) {$\color{darkred}\scriptstyle{k_i-r}$};
\end{tikzpicture}
}
\bigg) z_{i,r}\qquad(i \in I).
\end{equation}
The endomorphism algebras 
\begin{equation}
\widehat{H}'_n(\kappa):= \bigoplus_{\bi, \bj \in I^{\otimes
    n}}\Hom_{\H'(\kappa)}(\bi, \bj)
\end{equation}
are Rouquier's
{\em deformed cyclotomic quiver Hecke algebras};
in particular, $\widehat{H}_0'(\kappa)$, the endomorphism algebra of
the unit object $\varnothing$ of $\H'(\kappa)$,
is $\k[z]$.
The theorem of Kang and Kashiwara shows that the additive Karoubi
envelope 
$\dot\H'(\kappa)$ can be endowed with the structure of a
$\k[z]$-linear 2-representation
with $E_i$ and $F_i$ arising from certain restriction and
induction functors\footnote{The equivalent formulation of the definition of
2-representation from Remark~\ref{2repb} is 
convenient here since induction is obviously left adjoint to
restriction; Lemma~\ref{nec} implies that it is right adjoint too,
but this is far from clear from the outset.}.
Applying the universal property from (\ref{up}), 
we get 
a canonical
strongly equivariant functor
$G:\L(\kappa) \rightarrow \dot\H'(\kappa)$
such that $\ev_{1_\kappa} (G) = \varnothing$.

\begin{theorem}[{\cite[Theorem 4.25]{R2}}]\label{rrr}
The functor $G$ is $\k[z]$-linear
and it induces a strongly equivariant equivalence
$G:\dot\L(\kappa)\rightarrow\dot\H'(\kappa)$.
Hence,
$\End_{\L(\kappa)}(1_\kappa)\cong\k[z]$.
\end{theorem}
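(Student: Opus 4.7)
The strategy has three main steps: construct $G$ via the universal property (\ref{up}), exhibit an explicit quasi-inverse $H$ using the negative quiver Hecke structure on $\L(\kappa)$, and verify mutual invertibility using a basis comparison. Throughout, the Kang--Kashiwara theorem underpinning the 2-representation structure on $\dot\H'(\kappa)$ does much of the heavy lifting.

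First I would verify that $\varnothing \in \dot\H'(\kappa)_\kappa^{\hw}$, which holds because the functor $E_i$ evaluated at $\varnothing$ is essentially the restriction functor from $\widehat H'_1(\kappa)$-modules to $\widehat H'_0(\kappa) = \k[z]$-modules, and this vanishes on the trivial module. The universal property (\ref{up}) then produces a strongly equivariant functor $G : \L(\kappa) \rightarrow \dot\H'(\kappa)$ with $G(1_\kappa) = \varnothing$. The $\k[z]$-linearity reduces to checking where each generator $z_{i,r}$ goes: by construction in $\L(\kappa)$ it is the dotted bubble prescribed by (\ref{poll2}), and under $G$ the bubble homomorphism $\beta_{\kappa;i}$ of (K4) lands in $\End_{\H'(\kappa)}(\varnothing) = \widehat H'_0(\kappa) = \k[z]$, matching the $\k[z]$-structure on the target.

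For the quasi-inverse, I would define a monoidal functor $H_0 : \H' \otimes_\k \k[z] \rightarrow \L(\kappa)$ sending $\bi = i_n \otimes \cdots \otimes i_1$ to $F_{i_n} \cdots F_{i_1} 1_\kappa$ and using the negative quiver Hecke generators from (K1) for the morphisms, with $z_{i,r}$ acting via the bubble dictated by (\ref{poll2}). The key point is to show that $H_0$ factors through $\H'(\kappa)$: that is, that the defining cyclotomic relation
\[
\sum_{r=0}^{k_i}
\bigg(\mathord{
\begin{tikzpicture}[baseline = 2]
	\draw[<-,darkred,thick] (0.08,-.3) to (0.08,.4);
      \node at (0.08,0.1) {$\color{darkred}\bullet$};
   \node at (0.1,.5) {$\scriptstyle{i}$};
   \node at (-0.36,.1) {$\color{darkred}\scriptstyle{k_i-r}$};
\end{tikzpicture}
}\bigg) z_{i,r}
\]
acting on $F_i 1_\kappa$ already vanishes in $\L(\kappa)$. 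This is verified by capping off the strand at the top and using the curl relation (K6) to rewrite the resulting endomorphism as a sum of bubbles multiplied by upward $i$-strands emanating from $1_\kappa$; the latter are zero by (\ref{poll1}), and the remaining terms collapse using (\ref{poll2}) (and the identity $\e(-t)\h(t) = 1$ in $\Sym$). Once this is done, $H_0$ descends to a $\k[z]$-linear monoidal functor extending to $H : \dot\H'(\kappa) \rightarrow \dot\L(\kappa)$; strong equivariance is then supplied by Lemma~\ref{nec}, which provides the required natural transformations $\zeta_i'$.

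The main obstacle is verifying full faithfulness of $G$, and I would handle this by a basis comparison. The Khovanov--Lauda spanning set of \cite[Prop.~3.11]{KL3}, combined with the bubble relations (\ref{poll2}) and the nilpotency of dots analogous to Lemma~\ref{nilp}, yields an upper bound for $\Hom_{\L(\kappa)}(F_\bi 1_\kappa, F_\bj 1_\kappa)$ as a $\k[z]$-module. On the other side, the Kang--Kashiwara theorem furnishes a matching basis of the same rank for $\Hom_{\H'(\kappa)}(\bi, \bj)$ as a $\widehat H'_0(\kappa) = \k[z]$-module. Since $H$ surjects the former basis onto the latter, both maps must be isomorphisms. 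Essential surjectivity on $\dot\H'(\kappa)$ is immediate at the level of $\H'(\kappa)$, hence on Karoubi envelopes. The final assertion $\End_{\L(\kappa)}(1_\kappa) \cong \k[z]$ is then immediate from the equivalence together with the identification $\End_{\H'(\kappa)}(\varnothing) = \widehat H'_0(\kappa) = \k[z]$.
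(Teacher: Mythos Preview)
The paper does not give its own proof of this theorem: it is stated as a citation of \cite[Theorem~4.25]{R2}, with the Kang--Kashiwara theorem \cite[Theorem~5.2]{KK} and the universal property~(\ref{up}) supplying the setup but not the argument itself. So there is no in-paper proof to compare against; your proposal is effectively an attempt to reconstruct Rouquier's argument.

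Your outline is sound in shape (build $G$ from the universal property, build an explicit $H$ in the other direction, compare), but there are several points that need tightening. First, the verification that the cyclotomic relation vanishes in $\L(\kappa)$ is overworked: that relation is precisely (\ref{buh1}) in the case $\kappa'=0$, which is one of the \emph{defining} generators of the invariant ideal in Construction~\ref{bad}. No capping or curl-relation manipulation is required; the passage from (\ref{buh1})--(\ref{huh1}) to (\ref{poll1})--(\ref{poll2}) is exactly Lemma~\ref{new}. Second, your appeal to ``nilpotency of dots analogous to Lemma~\ref{nilp}'' is incorrect as stated: Lemma~\ref{nilp} concerns the minimal specialization $\Lm(\kappa'|\kappa)$, and in the deformed $\L(\kappa)$ the dots are \emph{not} nilpotent. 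What you need instead is that each dot is integral over $\k[z]$ (it satisfies a monic polynomial with coefficients in $\k[z]$), which is what underlies the finite-generation statement of Remark~\ref{fd}; this is a genuinely more delicate induction than Lemma~\ref{nilp}.

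Finally, the closing step ``since $H$ surjects the former basis onto the latter, both maps must be isomorphisms'' is a gap. Surjectivity of $H$ on morphism spaces together with a rank upper bound on the target does not by itself force injectivity (the target could a~priori have $\k[z]$-torsion). You need to first check that $G\circ H$ is isomorphic to the identity on $\H'(\kappa)$, which is straightforward on generators once you unwind the universal property and the definition of $H$; this makes $H$ a split monomorphism on each Hom, and then your rank comparison finishes the job. Alternatively, one can use the Kang--Kashiwara freeness of $\widehat H'_n(\kappa)$ over $\k[z]$ together with $G\circ H\cong\mathrm{id}$ to deduce directly that $H$ is fully faithful, bypassing the upper bound on $\L(\kappa)$ entirely.
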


It follows immediately that 
the {\em minimal categorification}
\begin{equation}
\Lm(\kappa) := \Lm(0|\kappa)
\end{equation}
is Morita equivalent to the quotient $\Hm'(\kappa)$ of $\H'$
by the left tensor ideal generated by
$\mathord{
\begin{tikzpicture}[baseline = 2]
	\draw[<-,darkred,thick] (0.08,-.1) to (0.08,.3);
      \node at (0.08,0.15) {$\color{darkred}\bullet$};
   \node at (0.1,.4) {$\scriptstyle{i}$};
   \node at (-0.2,.18) {$\color{darkred}\scriptstyle{k_i}$};
\end{tikzpicture}
}\:(i \in I)$.
The endomorphism algebras 
\begin{equation}
H_n'(\kappa):=
\bigoplus_{\bi, \bj \in I^{\otimes
    n}}\Hom_{\Hm'(\kappa)}(\bi, \bj)
\end{equation}
are the
{\em cyclotomic quiver Hecke algebras} 
introduced by Khovanov and Lauda in \cite{KL1}.
They are finite-dimensional algebras, so
the blocks of $\Lm(\kappa)$ are finite-dimensional
algebras too; in particular, $\Lm(\kappa)$ is Artinian.
Moreover, $\End_{\Lm(\kappa)}(1_\kappa) \cong H_0'(\kappa) \cong \k$.
This shows that $1_\kappa$ is non-zero in
$\Lm(\kappa)$, which is the crucial
point needed in order to deduce the following theorem,
which was established already in \cite[Theorem 6.2]{KK}.
Note also that Webster has given a different proof of all of these results in \cite[$\S$3]{Web}.

\begin{theorem}[{\cite{KK, Web}}]\label{t}
For any $\kappa \in P^+$, the homomorphism 
$$
L(\kappa) \rightarrow
\mathbb{C}\otimes_{\Z}
K_0(\dotLm(\kappa))
$$
from Lemma~\ref{hom} is an isomorphism.
\end{theorem}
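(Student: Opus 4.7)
The plan is to prove $\phi:L(\kappa)\to\V$, where $\V:=\mathbb{C}\otimes_\Z K_0(\dotLm(\kappa))$, is bijective by checking injectivity and surjectivity separately. Note that by Lemma~\ref{maid}, $\V$ is an integrable $\g$-module, with weight decomposition via Lemma~\ref{gg} matching the block decomposition of $\Lm(\kappa)$.

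For injectivity, I will first verify that $[1_\kappa]$ is a highest weight vector of weight $\kappa$: the identity 2-morphism on $E_i 1_\kappa$ is precisely the generator (\ref{poll1}) of the ideal defining $\Lm(\kappa)$, so $E_i 1_\kappa\cong 0$ in the quotient, giving $e_i[1_\kappa]=[E_i 1_\kappa]=0$ for all $i\in I$. Moreover $[1_\kappa]\neq 0$: as recorded in the discussion preceding the theorem, $\End_{\Lm(\kappa)}(1_\kappa)\cong H_0'(\kappa)\cong\k$, so $1_\kappa$ is a nonzero object of $\dotLm(\kappa)$ with nonzero Grothendieck class. Irreducibility of $L(\kappa)$ together with $\phi(\bar 1_\kappa)=[1_\kappa]\neq 0$ then forces $\ker\phi=0$.

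For surjectivity I aim to show $\V=U(\g)\cdot[1_\kappa]$, which equals $\phi(L(\kappa))$. Every object of $\dotLm(\kappa)$ is a direct summand of $u\cdot 1_\kappa$ for some 1-morphism $u$ in $\UU(\g)$, by construction of the quotient Karoubi envelope. The surjection $\gamma:\dot U(\g)_\Z\twoheadrightarrow K_0(\dot\UU(\g))$ from (\ref{gamma}) together with Lemma~\ref{gg} then gives $[u\cdot 1_\kappa]=x\cdot[1_\kappa]\in\dot U(\g)_\Z\cdot[1_\kappa]$ for some $x\in\dot U(\g)_\Z$. To pass from $u\cdot 1_\kappa$ to the classes of its individual indecomposable summands, I use the Morita equivalence $\dotLm(\kappa)\simeq\dot\Hm'(\kappa)$ from the discussion after Theorem~\ref{rrr} combined with Lemma~\ref{divpower}: primitive idempotents in the finite-dimensional algebras $H_n'(\kappa)$ give all indecomposables, and the divided-power endofunctors $F_i^{(a)}$ (which are summands of $F_i^a$ by Lemma~\ref{divpower}) act on $K_0$ as $f_i^{(a)}\in\dot U(\g)_\Z$. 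Hence iterated divided-power classes $[F_{i_n}^{(a_n)}\cdots F_{i_1}^{(a_1)}1_\kappa]$ span $\V$ over $\mathbb{C}$.

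The main obstacle is this last assertion, which at bottom amounts to matching classes of indecomposable projectives of $H_n'(\kappa)$ with specific iterated divided-power products. The standard resolution, developed in~\cite{KK} and~\cite{Web}, is a dimension count: using the finite-dimensionality of each $H_n'(\kappa)$ (Corollary~\ref{fdaty}), one verifies $\dim\V_\lambda=\dim L(\kappa)_\lambda$ weight-by-weight via a combinatorial identification of indecomposable projectives with a basis of $L(\kappa)_\lambda$, at which point the injectivity already established forces equality.
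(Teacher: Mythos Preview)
Your injectivity argument is correct and matches the paper's: nonvanishing of $[1_\kappa]$ (from $\End_{\Lm(\kappa)}(1_\kappa)\cong\k$) plus irreducibility of $L(\kappa)$ gives $\ker\phi=0$.

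The surjectivity argument, however, has a genuine gap which you yourself flag but do not close. Knowing that every object of $\dotLm(\kappa)$ is a summand of some $u\cdot 1_\kappa$ and that $[u\cdot 1_\kappa]\in\dot U(\g)_\Z\cdot[1_\kappa]$ does \emph{not} imply that the class of each indecomposable summand lies in that submodule; the passage ``primitive idempotents in $H_n'(\kappa)$ give all indecomposables, and divided powers act as $f_i^{(a)}$'' is not an argument that those indecomposables have classes expressible in divided-power monomials. Your final paragraph then defers to a ``dimension count'' from \cite{KK,Web}, but this is a citation rather than a proof, and in any case is not how those references proceed: establishing $\dim\V_\lambda=\dim L(\kappa)_\lambda$ directly is precisely the hard part.

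The paper resolves surjectivity by a different, self-contained route via the associated crystal (forward-referenced as Corollary~\ref{lastone}). Using the nilpotent categorical action on $\rMod\Lm(\kappa)$ and the Chuang--Rouquier structure theory (Theorem~\ref{ac}), one shows for each indecomposable projective $P(b)$ with $m:=\eps_i(b)>0$ that $F_i^{(m)}P(\tilde e_i^{\,m}b)\cong P(b)\oplus(\text{terms with larger }\eps_i)$. A double downward induction---on weight and on $\eps_i$---then shows $[P(b)]$ lies in the $\Z$-span of the $f_{i_n}^{(r_n)}\cdots f_{i_1}^{(r_1)}[1_\kappa]$. This is the missing idea in your proposal: one needs the crystal/dual-perfect-basis mechanism to control how indecomposable projectives decompose under divided-power functors, not merely the existence of the Morita equivalence with $\dot\Hm'(\kappa)$.
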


\begin{proof}
We just saw that the homomorphism is non-zero. Hence, it is injective.
It is surjective too by a standard argument recalled in Corollary~\ref{lastone} below. 
\end{proof}

\begin{corollary}\label{gr}
If $\g$ is of finite type, then the 
homomorphism $\gamma:\dot U(\g)_\Z \rightarrow K_0(\dot\UU(\g))$
from (\ref{gamma}) is an isomorphism; similarly, so is $\gamma_q$
from (\ref{gammaq}).
\end{corollary}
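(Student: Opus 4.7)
The surjectivity of $\gamma$ and $\gamma_q$ is already recorded in the paragraph preceding (\ref{gamma}), so only injectivity remains. My plan is to use the family of minimal categorifications $\dotLm(\kappa)$ for $\kappa \in P^+$ as a separating collection of 2-representations, and then invoke the finite-type statement in Lemma~\ref{faith}.

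For each $\kappa \in P^+$, the 2-representation $\Lm(\kappa)$ corresponds to a strict 2-functor $\UU(\g) \rightarrow \CCat$ which extends canonically to $\dot\UU(\g)$, thereby inducing a ring homomorphism
\[
\rho_\kappa : K_0(\dot\UU(\g)) \rightarrow \End_\Z\!\bigl(K_0(\dotLm(\kappa))\bigr).
\]
By construction of $\gamma$ as in Lemma~\ref{gg}, the composite $\rho_\kappa \circ \gamma$ is the ring homomorphism encoding the $\dot U(\g)_\Z$-action on $K_0(\dotLm(\kappa))$ coming from its integrable module structure. After extending scalars to $\mathbb{C}$, Theorem~\ref{t} identifies $\mathbb{C}\otimes_\Z K_0(\dotLm(\kappa))$ with $L(\kappa)$ as a $\g$-module.

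Now suppose $x \in \ker\gamma$. Then $\rho_\kappa(\gamma(x)) = 0$ for every $\kappa \in P^+$, so after tensoring with $\mathbb{C}$ the image of $x$ in $\dot U(\g)$ annihilates every $L(\kappa)$. Since $\g$ is of finite type, the second statement of Lemma~\ref{faith} forces $x = 0$ in $\dot U(\g)$. As $\dot U(\g)_\Z$ is by definition a subring of $\dot U(\g)$, this gives $x = 0$ in $\dot U(\g)_\Z$, so $\gamma$ is an isomorphism.

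For $\gamma_q$ the argument is parallel: the homogeneity condition (\ref{hc}) ensures that the defining ideal of $\Lm(\kappa)$ is homogeneous, so $\Lm(\kappa)$ has a canonical graded enhancement giving a 2-representation of $\underline{\UU}_q(\g)$. One first upgrades Theorem~\ref{t} to the graded setting, using Theorem~\ref{rrr} together with the graded cyclotomic quiver Hecke algebra description of \cite{KK} to identify $\mathbb{Q}(q) \otimes_{\Z[q,q^{-1}]} K_0(\dot{\underline{\Lm}}_q(\kappa))$ with $L_q(\kappa)$; then the finite-type analog of Lemma~\ref{faith} for $\dot U_q(\g)$ completes the proof exactly as in the ungraded case. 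The main subtlety to check is the graded lift of Theorem~\ref{t}: once $[1_\kappa]$ is verified to be $\Z[q,q^{-1}]$-torsion-free in the graded Grothendieck group (following from $\End_{\Lm(\kappa)}(1_\kappa) \cong \k$), the surjection in weight $\kappa$ propagates throughout by repeatedly applying the $F_i$'s, just as in the ungraded argument.
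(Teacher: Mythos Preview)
Your proof is correct and follows essentially the same approach as the paper: both arguments use the minimal categorifications $\Lm(\kappa)$ together with Theorem~\ref{t} to show that any $x\in\ker\gamma$ annihilates every $L(\kappa)$, and then invoke the second statement of Lemma~\ref{faith} in finite type to conclude $x=0$. Your treatment of $\gamma_q$ is more detailed than the paper's (which just says ``similarly''), but the added outline is along the expected lines.
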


\begin{proof}
It remains to show that $\gamma$ is injective.
Take $u \in \dot U(\g)_\Z$ with $\gamma(u) = 0$.
By its definition, the isomorphism
$L(\kappa) \stackrel{\sim}{\rightarrow}
\mathbb{C}\otimes_{\Z}
K_0(\dotLm(\kappa))$
from Theorem~\ref{t}
intertwines the action of $u$ on the left hand space with the action
of $\gamma(u)$ on the right hand space.
Hence, $u$ annihilates $L(\kappa)$. This is true for each $\kappa \in
P^+$, so we get that $u=0$ by the second statement of Lemma~\ref{faith}.
\end{proof}

\begin{remark}
The results just explained obviously have lowest weight analogs too.
Taking $\kappa = 0$ 
in Construction~\ref{bad},
the universal and minimal
 categorifications of $L'(\kappa')$ are
$\Lp(\kappa') := \L(\kappa'|0)$
and $\Lmp(\kappa') := \Lm(\kappa'|0)$, respectively.
They are Morita equivalent to
analogously defined cyclotomic
quotients $\H(\kappa')$ and $\Hm(\kappa')$ 
of the quiver Hecke category $\H$.
Moreover $\End_{\L'(\kappa')}(1_{\kappa'}) \cong \k[z]$, the
finite-dimensional category
$\Lmp(\kappa')$ is Artinian, and
$\mathbb{C}\otimes_{\Z} K_0(\dotLmp(\kappa')) \cong L(\kappa')$.
\end{remark}

\begin{remark}\label{crazy}\rm
The general definition of the 2-representations $\L(\kappa'|\kappa)$ for
$\kappa,\kappa' \neq 0$ given in Construction~\ref{bad} is new, but
their minimal specializations 
$\Lm(\kappa'|\kappa)$ 
appear already\footnote{Note that Webster's diagrams are the mirror
images of ours in a vertical axis.} 
in \cite[Proposition 5.6]{Wcan}. 
In \cite[Theorem 5.13]{Wcan}, Webster asserts that
the homomorphism in Lemma~\ref{hom} is an isomorphism for all $\kappa,\kappa'$,
so that $\Lm(\kappa'|\kappa)$ is a categorification of
$L(\kappa'|\kappa) \cong L'(\kappa')\otimes L(\kappa)$ in general.
Like in the proof of Corollary~\ref{gr} (using the first statement of
Lemma~\ref{faith} instead of the second), such a result implies that 
$\dot U(\g)_\Z \cong K_0(\dot\UU(\g))$ 
for arbitrary $\g$.
However, Webster's proof of this is intertwined with his new
approach in \cite{erratum} to verifying the
Nondegeneracy Condition; cf. Remark~\ref{dr}.
Actually, \cite{erratum} is based on some even
more general deformations, which should be closely related to our
$\L(\kappa'|\kappa)$ when there is just one lowest and one highest
weight tensor factor.
\end{remark}

\begin{remark}
\rm
The finite-dimensional category
$\Lm(\kappa'|\kappa)$ is not Artinian in general (outside of finite type). We conjecture that
it is always Noetherian.
\end{remark}

\subsection{Categorical actions}
Henceforth, $\C$ denotes a locally Schurian category in the sense of Definition~\ref{kitty}.
We fix a set of representatives $\{L(b)\:|\:b \in \B\}$ for the
isomorphism classes of irreducible objects, and let $P(b)$ be a
projective cover of $L(b)$.
Let $K_0(\pC)$ denote the split
Grothendieck group of the additive category $\pC$ ($=$ finitely
generated projectives in $\C$).
The classes
$\{[P(b)]\:|\:b \in \B\}$
give a distinguished basis for $\mathbb{C}
\otimes_{\Z} K_0(\pC)$.

\begin{definition}\label{catact}
A {\em categorical action} of $\g$ on $\C$ is the following additional
data:
\begin{itemize}
\item[(A1)] a partition $\B = \bigsqcup_{\lambda \in P}
  \B_\lambda$
inducing a decomposition $\C = \prod_{\lambda \in P} \C_\lambda$ as in (L10) from $\S\ref{fc}$;
\item[(A2)]
sweet endofunctors $E_i$ of $\C$ for each $i \in I$
such that 
$E_i|_{\C_\lambda}:\C_\lambda \rightarrow
\C_{\lambda+\alpha_i}$ (recall Definition~\ref{swe});
\item[(A3)] a strict monoidal functor $\Phi:\H \rightarrow
  \mathcal{E}\!nd(\C)$
  with $\Phi(i) = E_i$ for each $i$, where $\H$ is the quiver Hecke
category from Definition~\ref{pd}.
\end{itemize}
Let
$x_i := \Phi\big(\mathord{
\begin{tikzpicture}[baseline = -2]
	\draw[->,thick,darkred] (0.08,-.15) to (0.08,.3);
      \node at (0.08,0.05) {$\color{darkred}\bullet$};
   \node at (0.08,-.25) {$\scriptstyle{i}$};
\end{tikzpicture}
}\big)$ and $\tau_{ij} := \Phi\big(\mathord{
\begin{tikzpicture}[baseline = -2]
	\draw[->,thick,darkred] (0.18,-.15) to (-0.18,.3);
	\draw[->,thick,darkred] (-0.18,-.15) to (0.18,.3);
   \node at (-0.18,-.25) {$\scriptstyle{i}$};
   \node at (0.18,-.25) {$\scriptstyle{j}$};
\end{tikzpicture}
}\big)$.
We also fix the choice of a right adjoint $F_i$ to $E_i$ for each $i
\in I$, and set $e_i := [E_i]$ and $f_i := [F_i]$, which are
endomorphisms of $K_0(\pC)$.
Then we impose the following axiom:
\begin{itemize}
\item[(A4)] 
for each $i,j \in I$ and $\la \in P$,
 the commutator $[e_i, f_j]$ acts on $K_0(\pC_\lambda)$
as multiplication by the scalar $\delta_{i,j} \langle h_i,\lambda\rangle$.
\end{itemize}
We say that the categorical action is {\em nilpotent}\footnote{Just as
  good would be to assume there is $c \in \k$ that $(x_i-c 1)_V$ is nilpotent for
all $i$ and $V$.}
if $(x_i)_{V}$ is a nilpotent element of the finite-dimensional
algebra
$\End_\C(E_i V)$
for all $i \in I$ and $V \in \ob\fC$.
\end{definition}

The following gives a general recipe producing a
categorical action on the locally Schurian category $\rMod \A$
for any finite-dimensional 2-representation $\A$.
In particular, applying it to the finite-dimensional 2-representation
$\Lm(\kappa'|\kappa)$ from Construction~\ref{bad}, 
this shows that 
$\rMod \Lm(\kappa'|\kappa)$ admits a categorical action; this example
is also nilpotent thanks to Lemma~\ref{nilp}.

\begin{example}\label{right}
Let $\A = \coprod_{\lambda \in P} \A_\lambda$ be a finite-dimensional 2-representation.
Let 
$A = \bigoplus_{\lambda \in P} A_\lambda$ 
be the associated locally unital algebra
as in Remark~\ref{cats};
the distinguished idempotents in $A$ are indexed by
$X = \bigsqcup_{\lambda \in P} X_\lambda$ where $X_\lambda := \ob \A_\lambda$.
We define a categorical action on $\C := \rMod A$ as follows.
\begin{itemize}
\item
Fix representatives $\{L(b)\:|\:b \in \B_\lambda\}$ for the isomorphism
classes of irreducible $A_\lambda$-modules, and let $\B :=
\bigsqcup_{\lambda \in P} \B_\lambda$.
This partition induces the decomposition
$\C = \prod_{\lambda \in P} \C_\lambda$ required for (A1); of
course, we have that $\C_\lambda = \rMod
A_\lambda$.
\item
The functor $E_i$ 
defines locally unital homomorphisms
$e_i:A_\lambda \rightarrow A_{\lambda+\alpha_i}$ 
for each $\lambda \in P$; moreover,
$e_i(1_u) = 1_{E_i u}$ for each $u \in X_\lambda$.
Let $e_i^* A_{\lambda+\alpha_i}$ be
the $(A_{\lambda}, A_{\lambda+\alpha_i})$-bimodule obtained from
$A_{\lambda+\alpha_i}$ by restricting the
natural left action through this homomorphism.
Tensoring on the right with this bimodule defines a functor
$\hat E_i:\rMod A_\lambda \rightarrow \rMod A_{\lambda+\alpha_i}$
for each $\lambda \in P$.
This is the data required for (A2). The endofunctor $\hat E_i$ is sweet because
the functor $F_i$ extends similarly to a functor
$\hat F_i:\rMod A_{\lambda+\alpha_i} \rightarrow \rMod A_{\lambda}$ which
is biadjoint to $\hat E_i$ thanks to Lemma~\ref{nec}.
\item
The natural transformation $x_i:E_i
  \rightarrow E_i$
on each $u \in X_\lambda$ 
produces a family of elements
$x_{i;u} \in 1_{e_i u} A_{\lambda+\alpha_i} 1_{e_i u}\:(u \in
X_\lambda)$ 
such that $e_i(f) x_{i;u} = x_{i;v} e_i(f)$ for all $f \in 1_v A_\lambda 1_u$.
Hence, there is a bimodule homomorphism
$e_i^* A_{\lambda+\alpha_i} \rightarrow e_i^*
A_{\lambda+\alpha_i}$ defined on
$1_{e_i u} A_{\lambda+\alpha_i}$ by
left multiplication by $x_{i;u}$,
from which we get 
$\hat x_i:\hat E_i \rightarrow \hat E_i$.
Similarly, $\tau_{ij}:E_i E_j \rightarrow E_{j} E_i$
translates to $\tau_{ij;u} \in 1_{e_je_i u} A_{\lambda+\alpha_i+\alpha_j}
1_{e_i e_j u}$
such that $(e_je_i(f)) \tau_{ij;u} = \tau_{ji;v} (e_ie_j(f))$
for all $f \in 1_v A_\lambda 1_u$ and $u,v \in X_\lambda$.
Left multiplication by these elements defines a bimodule homomorphism
$(e_i e_j)^* A_{\lambda+\alpha_i+\alpha_j}
\rightarrow (e_je_i)^* A_{\lambda+\alpha_i+\alpha_j}$.
The composite functor $\hat E_i \hat E_j$ is defined by tensoring with $
(e_i e_j)^* A_{\lambda+\alpha_i+\alpha_j}  \cong
e_j^* A_{\lambda+\alpha_j} \otimes_{A_{\lambda+\alpha_j}}
  e_i^* A_{\lambda+\alpha_i+\alpha_j}$, 
so this is what we need to get 
$\hat \tau_{ij}:\hat E_i \hat E_j \rightarrow \hat E_j \hat E_i$.
Thus we have the data for (A3).
\item Finally the axiom (A4) follows from Lemma~\ref{gg} and (\ref{yoneda2}).
\end{itemize}
\end{example}

In fact, assuming nilpotency, the notion of a
finite-dimensional 2-representation is equivalent to the notion of
categorical action on a locally Schurian category.
This depends on the following theorem, which is a variation on \cite[Theorem 5.27]{Rou}.
It is a remarkable example of relations on the Grothendieck
group (specifically, axiom (A4)) implying relations between 2-morphisms (specifically,
axiom (M5)); 
Rouquier refers to this as ``control by $K_0$.''
It is very useful since (M5) can be very difficult to check directly.

\begin{theorem}\label{jolly}
Suppose that we are given a {nilpotent} categorical action on
some locally Schurian category $\C$.
Then the natural transformations $\sigma_{ij}\:(i \neq j)$ and
$\rho_{i,\lambda}$ from 
Definition~\ref{2rep} are invertible. Hence, $\C$ is a locally
Schurian 2-representation.
\end{theorem}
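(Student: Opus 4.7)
The plan is to verify axiom (M5) by evaluating the natural transformations $\sigma_{ij}$ and $\rho_{i,\lambda}$ on each irreducible object. First I would observe that all of the functors involved ($E_jF_i$, $F_iE_j$, $E_iF_i$, $F_iE_i$, and the identity) are sweet endofunctors of $\C$: compositions and biproducts of sweet endofunctors are sweet, essentially by Theorem~\ref{sef}. Consequently Lemma~\ref{swee} applies, and it suffices to check that $(\sigma_{ij})_{L(b)}$ for $i \neq j$ and $(\rho_{i,\lambda})_{L(b)}$ are isomorphisms for every irreducible $L(b) \in \C_\lambda$.

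Next I would extract numerical information from axiom (A4). Because $E_i$ and $F_j$ are sweet, they are exact and preserve the lattice of composition multiplicities, so $[e_i, f_j]$ also makes sense as an endomorphism of the larger group $K_0(\dC)$ of locally finite-dimensional objects (where $L(b)$ does represent a class). The nilpotency hypothesis on each $(x_i)_V$ combined with Lemma~\ref{proud} implies that $E_i^n L(b) = 0$ and $F_i^n L(b) = 0$ for $n$ large, so all the Grothendieck-group expressions in question are finite sums, and axiom (A4) forces
\[
[E_jF_iL(b)] = [F_iE_jL(b)] \quad (i\neq j), \qquad [E_iF_iL(b)] - [F_iE_iL(b)] = \langle h_i, \lambda\rangle [L(b)]
\]
in $K_0(\dC)$. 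This identifies dimensions of the source and target of $(\sigma_{ij})_{L(b)}$ and of $(\rho_{i,\lambda})_{L(b)}$.

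The core of the proof is then a reduction to rank one, following the argument of Webster acknowledged in the introduction. For fixed $i$, consider the data consisting only of $E_i, F_i$, the dot $x_i$, the crossing $\tau_{ii}$, and the units/counits of $(E_i, F_i)$; this is a categorical $\mathfrak{sl}_2$-action in the sense of Chuang--Rouquier (axioms (A1)--(A3) restricted to one node, plus (A4) with $i = j$, plus nilpotency). I would then decompose $E_i^{n} L(b)$ and $F_i^{m}L(b)$ into generalized eigenspaces for $x_i$, exploit the nilpotency to see these agree with the ordinary kernels/images of powers of $x_i$, and apply the rank-one structure theory of \cite{CR} (as extended to the locally Schurian setting using Lemma~\ref{swee} and Theorem~\ref{sef}) to conclude that $(\rho_{i,\lambda})_{L(b)}$ is an isomorphism. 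For $\sigma_{ij}$ with $i \neq j$, I would instead use the identity $[E_jF_iL(b)] = [F_iE_jL(b)]$ together with the pitchfork relation (K3) and surjectivity considerations: the structure of $\sigma_{ij}$ as the horizontal composite $F_i E_j \eps_i \circ F_i \tau_{ij} F_i \circ \eta_i E_j F_i$ makes it a concrete 2-morphism, whose image on the irreducible can be compared with the known dimension using adjunction.

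The main obstacle will be the rank-one case: establishing that $(\rho_{i,\lambda})_{L(b)}$ is an isomorphism from just the $K_0$-level commutator and nilpotency really does require an $\mathfrak{sl}_2$-categorification input, and in the locally Schurian (not necessarily Artinian) setting one must be careful that $L(b)$ has finite length only inside the generalized eigenspaces picked out by $x_i$-nilpotency, rather than globally. The technical heart of the argument will be packaging this finite-length behaviour well enough that Chuang--Rouquier's rank-one results apply, and verifying that the abstractly defined maps $\sigma_{ii}$ and $\rho_{i,\lambda}$ coincide with the ones produced by those results.
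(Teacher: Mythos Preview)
Your overall strategy---reduce to irreducibles via Lemma~\ref{swee}, then invoke the rank-one Chuang--Rouquier theory---matches the paper's, but the proposal has a genuine gap at exactly the point you flag as ``the main obstacle.''

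For $\rho_{i,\lambda}$, you say you will ``apply the rank-one structure theory of \cite{CR} (as extended to the locally Schurian setting using Lemma~\ref{swee} and Theorem~\ref{sef}).'' But this is the whole difficulty: \cite[Theorem~5.22]{Rou} and the underlying \cite[Theorem~5.27]{CR} are proved for categories in which the relevant objects have finite length, and in a merely locally Schurian category $E_i L(b)$ need not. Lemma~\ref{swee} and Theorem~\ref{sef} do not supply this. The paper's device is a Serre quotient: for fixed $b$, let $\B''$ be the finite set of $a$ such that $P(a)$ occurs as a summand of some word in $E_i,F_i$ applied to $P(b)$ (finiteness uses integrability from (A4)), set $\B'=\B\setminus\B''$, and pass to the quotient $\C/\C'$ as in \S\ref{sq}. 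This quotient is finite, inherits a categorical $\mathfrak{sl}_2$-action, and there \cite[Theorem~5.22]{Rou} applies directly; one then transports the isomorphism back to $\C$ via Lemma~\ref{qlem}. Your proposal does not contain this reduction, and without it the appeal to \cite{CR} is not justified.

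For $\sigma_{ij}$ with $i\neq j$, your plan is to combine the $K_0$-equality $[E_jF_iL(b)]=[F_iE_jL(b)]$ with ``surjectivity considerations.'' This does not suffice: in the absence of finite length, a surjection (or injection) between objects with the same class in $K_0(\dC)$ need not be an isomorphism. The paper instead uses \cite[Theorem~5.25]{Rou} to get invertibility of $\sigma_{ij}$ on every $E_i^r K$ with $K$ irreducible and $F_iK=0$, then shows each irreducible $L$ is both a quotient and a subobject of such an $E_i^r K$ (using integrability and Theorem~\ref{sef} to guarantee $F_i^r L$ has nonzero socle and head). Naturality then gives both injectivity and surjectivity of $(\sigma_{ij})_L$, and Lemma~\ref{swee} finishes. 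Your ``comparison with known dimension using adjunction'' does not provide this sandwich.
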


\begin{proof}
Note to start with
that
$\mathbb{C}\otimes_{\Z} K_0(\pC)$
is an integrable $\g$-module, by the same argument as in the proof of Lemma~\ref{maid}.

In this paragraph, we explain how to see that $\rho_{i,\lambda}$ is
invertible (for fixed $i$ and $\lambda$).
By Lemma~\ref{swee}, it suffices to show that $\rho_{i,\lambda}$ is
invertible on $L(b)$ for $b \in \B_\lambda$.
In the Artinian case, this follows
immediately from 
\cite[Theorem 5.22]{Rou}.
The proof in general reduces to the Artinian case as follows.
By (A4) and integrability, the set 
$$
\B'' := \{a \in \B\:|\:P(a)\text{ is a summand
of some sequence of $E_i$ and $F_i$ applied to }P(b)\}
$$
is finite.
Of course, $b \in\B''$.
Let $\B' := \B \setminus \B''$ and $\pi: \C \rightarrow \C / \C'$ be the
corresponding Serre quotient as in $\S$\ref{sq}.
The isomorphism classes of 
irreducible objects of $\C / \C'$ are represented by $\{\pi L(a)\:|\:a
\in \B''\}$; in particular,
$\C / \C'$ is finite. Moreover, using the left adjoint functor $\pi^!$, we
may identify its 
complexified $K_0$
with
the subspace of $\mathbb{C}\otimes_{\Z} K_0(\pC)$ spanned by
$\{[P(a)]\:|\:a \in \B''\}$. The definition of $\B''$ ensures that
this subspace is stable under the action of $e_i$ and $f_i$.
Consequently,
the functors $E_i$ and $F_i$ preserve the subcategory $\C'$;
for example, for $E_i$, this follows because
$\Hom_{\C}(P(c), E_i L(d))
\cong \Hom_{\C}(F_i P(c), L(d)) = 0$
for all $c \in \B'', d \in \B'$.
Hence, $E_i$ and $F_i$
induce endofunctors of $\C / \C'$, 
showing that
$\C / \C'$ admits a categorical
$\mathfrak{sl}_2$-action.
By \cite[Theorem 5.22]{Rou}, $\rho_{i,\lambda}$ is invertible on
$\pi L(b) \in \ob \C / \C'$. 
It just remains to invoke Lemma~\ref{qlem} to deduce that $\rho_{i,\lambda}$
is invertible on $L(b) \in \ob \C$ too.
In order to check the hypotheses
of Lemma~\ref{qlem} here, we 
should note that if $X$ is any functor obtained by taking a finite
composition and/or direct sum of the
categorification functors $E_i$ and $F_i$,
then $X L(b)$ is finitely generated and cogenerated by
Theorem~\ref{sef}.
Moreover all constituents of $\operatorname{soc}(X L(b))$ and
$\operatorname{hd}(X L(b))$ are $\cong L(c)$ for
$c \in \B''$; for example, to see this for the head, 
the right adjoint $Y$ to $X$
preserves $\C'$ as before, so for $c \in \B'$ we get that
$\Hom_\C(X L(b), L(c)) \cong \Hom_\C(L(b), Y L(c)) = 0$.

It remains to show that $\sigma_{ij} (i \neq j)$ is invertible. For
this, we 
appeal to the proof of \cite[Theorem 5.25]{Rou} to get that
$\sigma_{ij}$ is invertible on 
$E_i^r K$ for all $r \geq 0$ and any irreducible $K$ with $F_i K = 0$;
this is a very general result which requires no finiteness assumptions about
$\C$ other than integrability.
To deduce the invertibility of $\sigma_{ij}$ on arbitrary objects,
we claim that every irreducible object $L\in \ob \C$ can be realized as a
quotient (resp.\ subobject) of some such object $E_i^r K$. 
Using the claim and naturality, the invertibility of $\sigma_{ij}$ on
$E_i^r K$ implies
the surjectivity (resp. injectivity) of $\sigma_{ij}$ on $L$ too.
Then we apply Lemma~\ref{swee} to get that $\sigma_{ij}$ is invertible on
arbitary objects.
Finally, we must prove the claim.
By integrability, there is a unique $r \geq 0$ such that $F_i^r L \neq
0$ but $F_i^{r+1} L = 0$. Then we let $K$ be any irreducible
constituent of the socle (resp. head)
of $F_i^r L$, so that $K \hookrightarrow F_i^r L$ (resp. $F_i^r L
\twoheadrightarrow K$); this relies on the fact that $F_i^r L$
is finitely cogenerated (resp.\ generated) according to Theorem~\ref{sef}.
Applying adjointness, we get that there is a non-zero
homomorphism
$E_i^r K \twoheadrightarrow L$ (resp. $L \hookrightarrow E_i^r K$), as
required.
\end{proof}

\begin{remark}
The proof of Theorem~\ref{jolly} relies ultimately on \cite[Theorem 5.27]{CR}, in which
nilpotency is certainly assumed.
However, we expect that this result can be generalized, so that
the nilpotency assumption in the statement of Theorem~\ref{jolly}
(and in the remainder of this subsection) should actually be
unnecessary. 
\end{remark}

\begin{example}\label{left}
Let $\C$ be a locally Schurian category admitting a nilpotent
categorical action. Fix a set $X_0$ indexing finitely generated
projective objects $(P_x)_{x \in X_0}$ such that each $P_x$
belongs to some weight subcategory of $\C$.
For $n \geq 1$, define $X_n$ and $(P_x)_{x \in X_n}$ recursively by
letting $X_n$ consist of the symbols $e_i x, f_i x$ for all $x \in
X_{n-1}$ and $i \in I$, and setting $P_{e_i x} := E_i P_x, P_{f_i x}
:= F_i P_x$.
Let $X := \bigsqcup_{n \geq 0} X_n$.
We assume further that $(P_x)_{x \in X}$ is a projective generating family
for $\C$.
Having made this choice, we can define a finite-dimensional
2-representation $\mathcal A$  as follows.
\begin{itemize}
\item
Let $\mathcal A$ be the finite-dimensional category with object set
$X$, $\Hom_{\mathcal A}(x,y) := \Hom_{\mathcal C}(P_x,P_y)$,
and composition induced by composition in $\mathcal C$.
Note that $\mathcal A = \coprod_{\lambda \in P} \mathcal A_\lambda$
where
$\mathcal A_\lambda$ is the full subcategory generated by 
$X_\lambda := \{x \in X\:|\:P_x \in \ob \C_\lambda\}$.
\item 
Let $\bar E_i, \bar F_i:\mathcal A \rightarrow \mathcal A$ be the endofunctors
defined on objects by $\bar E_i x := e_i x, \bar F_i x := f_i x$.
On morphisms, $\bar E_i$ and $\bar F_i$ are defined by
applying the given categorification functors $E_i$ and $F_i$ in $\mathcal C$.
\item 
Let $\bar x_i:\bar E_i \Rightarrow \bar E_i, \bar \tau_{ij} :\bar E_i
\bar E_j \Rightarrow \bar E_j \bar E_i, \bar \eta_i:1_{\mathcal A}
\Rightarrow \bar F_i \bar E_i$ and $\bar \eps_i:\bar E_i \bar F_i
\Rightarrow 1_{\mathcal A}$ be the natural transformations obtained
by restricting $x_i, \tau_{ij}, \eta_i$ and $\eps_i$ in the obvious way.
\end{itemize}
This produces all of the data required by
Definition~\ref{2rep}(M1)--(M4). The final axiom (M5) is satisfied thanks
to Theorem~\ref{jolly}.
\end{example}

We leave it as an exercise for the reader to show that
Constructions~\ref{right} and \ref{left} are quasi-inverses in the appropriate sense.
In particular, if one starts
with $\mathcal C$ equipped with a nilpotent categorical action, 
applies Construction~\ref{left} to obtain $\mathcal
A$,  then applies Construction~\ref{right} to define a categorical action
on $\rMod \mathcal A$, then $\mathcal C$ and $\rMod \mathcal A$ are
strongly equivariantly equivalent.

\subsection{Associated crystals}
Finally we recall a definition of Kashiwara; e.g. see \cite{Kas}.

\begin{definition}\label{nc}
A {\em normal crystal} is a set $\B$ with a decomposition  $\B =
\bigsqcup_{\lambda \in P} \B_\lambda$, plus \textit{crystal operators}
$\tilde{e}_i, \tilde{f}_i : \B \to \B \sqcup \{0\}$ for each $i \in I$
satisfying the following axioms:
\begin{itemize}
\item[(C1)] for every $\lambda \in P$, the crystal operator $\tilde{e}_i$ restricts to a map $\B_\lambda \to \B_{\lambda + \alpha_i} \sqcup \{0\}$;
\item[(C2)] 
for $b \in \B$, we have that
$\tilde{e}_i(b) = b' \neq 0 \text{ if and only if } \tilde{f}_i(b') = b \neq 0;$
\item[(C3)] for every $b \in \B$, there is an $r \in \N$ such that
  $\tilde{e}_i^{r}(b) = \tilde{f}_i^r(b) = 0$.
\end{itemize}
For each $i$, define functions $\eps_i, \varphi_i : \B \to \N$ by
$$\eps_i(b) = \max \{r \in \N \:|\: \tilde{e}_i^r(b) \neq 0 \}, \hspace{0.25in}
\varphi_i(b) = \max \{r \in \N \:|\: \tilde{f}_i^r(b) \neq 0 \}.$$
Then
we also require that
\begin{itemize}
\item[(C4)] $\varphi_i(b) - \eps_i (b)
= \langle h_i , \lambda \rangle$ for each $b \in \B_\lambda$ and $i \in I$.
\end{itemize}
\end{definition}

The following theorem is essentially due to Chuang and Rouquier \cite{CR}, but
it has its origins in \cite{GV, G}.
It shows that every nilpotent categorical action has a canonical {\em associated
crystal}.

\begin{theorem}\label{ac}
Suppose that we are given a nilpotent categorical action on a locally
Schurian category $\C$ with irreducible objects $\{L(b)\:|\:b \in
\B\}$
as in Definition~\ref{catact}.
There is a unique structure of normal crystal on 
$\B = \bigsqcup_{\lambda \in P} \B_\lambda$
such that
\begin{enumerate}
\item 
$\tilde e_i b \neq 0\Leftrightarrow E_i L(b) \neq 0\Rightarrow
\operatorname{soc}(E_i L(b)) \cong \operatorname{hd}(E_i L(b)) \cong L(\tilde e_i b)$;
\item 
$\tilde f_i b \neq 0\Leftrightarrow F_i L(b) \neq 0\Rightarrow
\operatorname{soc}(F_i L(b)) \cong \operatorname{hd}(F_i L(b)) \cong L(\tilde f_i b)$.
\end{enumerate}
Moreover, the following hold for any $b \in \B$, $i \in I$ and $0 \leq
n \leq m := \eps_i(b)$:
\begin{enumerate}
\item[(3)]
$E_i^{(n)} L(b)$ has irreducible socle and head both isomorphic to
$L(\tilde e_i^n b)$;
\item[(4)]
$[E_i^{(n)} L(b):L(\tilde e_i^n b)] = \binom{m}{n}$,
and 
all irreducible subquotients of $E_i^{(n)} L(b)$ other than
 $L(\tilde e_i^n b)$ are of the form $L(c)$ for $c$ with $\eps_i(c) < m - n$;
\item[(5)]
the natural action of $Z(NH_n)$ on 
$E_i^{(n)} L(b)$
induces an isomorphism 
$$
\Sym_n / \langle
\h_{m-n+1},\h_{m-n+2},\dots,\h_m\rangle 
\stackrel{\sim}{\rightarrow}
\End_{\C}(E_i^{(n)} L(b)),
$$
where $\h_r$ denotes the $r$th elementary symmetric polynomial in
$X_1,\dots,X_n$.
\end{enumerate}
Analogous statements to (3)--(5) with $E$ and $\eps$ replaced by $F$ and
$\varphi$ also hold.
\end{theorem}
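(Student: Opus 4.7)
The plan is to reduce to the Artinian case and invoke the Chuang--Rouquier structure theory for categorical $\mathfrak{sl}_2$-actions, then transfer the information back to $\C$ using the Serre quotient machinery from $\S\ref{sq}$.

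First, fix $i \in I$ and $b \in \B$, and I would reduce to a finite Serre subquotient of $\C$ using exactly the argument from the proof of Theorem~\ref{jolly}. Namely, let $\B''$ be the set of $a \in \B$ such that $P(a)$ is a summand of some sequence of $E_j$'s and $F_j$'s applied to $P(b)$. By axiom (A4), integrability of $\mathbb{C}\otimes_\Z K_0(\pC)$, and the fact that the sweet endofunctors $E_i,F_i$ preserve finite generation (Theorem~\ref{sef}), the set $\B''$ is finite and contains every $\tilde e_i^n b, \tilde f_i^n b$ we will need; set $\B' := \B\setminus\B''$ and let $\pi:\C \rightarrow \C / \C'$ be the corresponding Serre quotient. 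The argument in Theorem~\ref{jolly} shows that $E_i$ and $F_i$ preserve the subcategory $\C'$, hence induce sweet endofunctors $\bar E_i, \bar F_i$ on $\C / \C'$, giving a nilpotent categorical action whose underlying category is now finite and (being basic and locally finite-dimensional with finitely many irreducibles and nilpotent dots) \emph{Artinian}.

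Next I would invoke the results of \cite{CR} in this Artinian case. Ignoring the action of all $E_j, F_j$ for $j \neq i$, we have a categorical $\mathfrak{sl}_2$-action on $\C / \C'$, and \cite[Proposition~5.20, Theorem~5.22]{CR} yield the following for $\bar L(b) := \pi L(b)$: the socle and head of $\bar E_i \bar L(b)$ are simple and isomorphic (defining $\tilde e_i b$), the analogue holds for $\bar F_i$, and assertions (3)--(5) hold with $L$ replaced by $\bar L$. In particular, (5) identifies $\End_{\C / \C'}(\bar E_i^{(n)} \bar L(b))$ with the truncated symmetric polynomial ring, via the map coming from $Z(NH_n)$; this map factors through the corresponding endomorphism algebra in $\C$ by functoriality of $\Phi$.

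Then I would transfer everything back to $\C$. The key tool is Lemma~\ref{qlem}: since $E_i^{(n)} L(b)$ is finitely generated and finitely cogenerated by Theorem~\ref{sef}, and since every constituent of $\soc(E_i^{(n)}L(b))$ and $\hd(E_i^{(n)}L(b))$ is of the form $L(c)$ for $c \in \B''$ (again by the adjunction argument from the proof of Theorem~\ref{jolly}), we have $\Hom_\C(E_i^{(n)} L(b), L(c)) \cong \Hom_{\C / \C'}(\bar E_i^{(n)} \bar L(b), \bar L(c))$ for all $c$ appearing in the head, and dually for the socle. This immediately gives (1), (2), (3); for (4) I would use exactness of $\pi$ and compare composition multiplicities on both sides, noting that every irreducible constituent of $E_i^{(n)}L(b)$ lies in $\C''$ by the same adjunction argument (any $L(c)$ with $c \in \B'$ would give $0 = \Hom_\C(P(c), E_i^{(n)} L(b)) = \Hom_\C(F_i^{(n)} P(c), L(b))$, forcing $c\in \B'$). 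Part (5) follows because $\pi$ induces an injection $\End_\C(E_i^{(n)} L(b)) \hookrightarrow \End_{\C/\C'}(\bar E_i^{(n)} \bar L(b))$ (by Lemma~\ref{qlem} with $V = W = E_i^{(n)}L(b)$, since socle and head both lie in $\C''$), through which the natural $Z(NH_n)$-action factors in a way compatible with both sides, so it must be an isomorphism onto the same quotient.

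Finally I would verify the crystal axioms (C1)--(C4): (C1) is immediate from the weight decomposition, (C2) follows from the biadjointness of $E_i, F_i$ combined with (1) and (2), (C3) follows from integrability of $\mathbb{C}\otimes_\Z K_0(\pC)$ applied to $[L(b)]$ together with Theorem~\ref{sef}, and (C4) follows from axiom (A4) on the Grothendieck group evaluated at $[L(b)]$: the alternating sum of composition multiplicities in $[e_i,f_i][L(b)]$ collapses via (4) to $\varphi_i(b) - \eps_i(b)$. The main obstacle is really the transfer argument in the third paragraph: one must be careful that $\pi$ behaves as expected on socles and heads and that all relevant irreducible constituents stay within $\B''$; everything else is bookkeeping built on top of \cite{CR} and the foundational results of $\S\ref{sq}$.
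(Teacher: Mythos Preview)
Your approach is essentially identical to the paper's: reduce to a finite Artinian Serre quotient exactly as in Theorem~\ref{jolly}, invoke the Chuang--Rouquier results \cite[Proposition~5.20]{CR} there, and transfer back to $\C$ via Lemma~\ref{qlem}. One small wrinkle: your verification of (C4) via ``the alternating sum of composition multiplicities in $[e_i,f_i][L(b)]$ collapses via (4) to $\varphi_i(b)-\eps_i(b)$'' does not work as stated, since (4) controls $E_i^{(n)}L(b)$ rather than $E_iF_iL(b)$; the paper instead passes to the highest weight vector $c=\tilde e_i^m b$, observes $[L(c)]$ is an $\mathfrak{sl}_2$-highest weight vector in the Grothendieck group, and reads off $\varphi_i(c)=\langle h_i,\lambda+m\alpha_i\rangle$ from $\mathfrak{sl}_2$-theory (alternatively, (C4) already holds in the Artinian quotient by \cite{CR} and simply transfers).
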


\begin{proof}
Note this only involves some fixed $i$, so we are reduced
immediately to the case that $\g$ is of rank one. In the Artinian
case, it suffices to work in the Schurian category $\fC$, and then
everything that we need is a consequence of \cite[Proposition
5.20]{CR} and the construction of \cite[Theorem 5.22]{Rou}. To give a little more detail, {\em loc. cit.} shows that
$E_i L(b)$ is either zero, or it has irreducible head and socle
which are isomorphic, and similarly for $F_i L(b)$.
Hence we can use (1)--(2) 
to define 
$\tilde e_i, \tilde f_i:\B \rightarrow \B \sqcup \{0\}$.
The axiom (C1) is clear, while (C2) follows by an adjunction argument.
Temporarily redefining $\eps_i(b)$ from 
$\eps_i(b) := \max\{r \in \N\:|\:E_i^{(r)} L(b)
\neq 0\}$, 
we get that properties (3) and (4)
hold by \cite[Proposition 3.20]{CR} again.
Using them, an easy induction on $\eps_i(b)$ shows that (C3) holds
and that
$\eps_i(b)$ agrees with the function from Definition~\ref{nc}.
Similarly,
$$
\phi_i(b) :=\max\{n \in \N\:|\:F_i^{(n)} L(b)
\neq 0\}=
\max\{n \in \N\:|\:\tilde f_i^{n} b
\neq 0\}.
$$
Now we can establish the final axiom (C4). Suppose that 
$b \in \B_\lambda$ and set $c := \tilde e_i^m b$.
We have that $\eps_i(c) = 0$, hence $E_i L(c) = 0$.
Thus, in the Grothendieck group, the class of $L(c)$ is an
$\mathfrak{sl}_2$-highest weight vector.
By $\mathfrak{sl}_2$-theory, we deduce that $\phi_i(c) = \langle
h_i,\lambda +m \alpha_i\rangle$.
Hence, $\phi_i(b) - \eps_i(b) = \phi_i(c) - 2m = \langle h_i,\lambda\rangle$ as required.
Finally, for (5),
the proof of \cite[Proposition 3.20]{CR} shows that the
natural action of $NH_n$ on $E_i^n L(b)$ induces an isomorphism
$NH_n / \langle X_1^m \rangle
\stackrel{\sim}{\rightarrow} \End_\C(E_i^n L(b))$.
By an elementary relation chase (omitted), the two-sided ideal of $NH_n$
generated by $X_1^m$ is also generated by $\h_{m-n+1},\dots,\h_m$. 
Recalling that $NH_n$ is a matrix algebra over its center $\Sym_n$, 
we deduce 
on truncating with the idempotent $\pi_{i,n}$
that $\End_\C(E_i^{(n)} L(b)) \cong \Sym_n / \langle
\h_{m-n+1},\dots,\h_m\rangle$.

To extend the result to the general locally Schurian case, we make a
reduction similar to the one made in the second paragraph of
the proof of Theorem~\ref{jolly}.
Fix $b \in \B$ and define $\B'', \B', \C'$ and
the quotient functor
$\pi:\C \rightarrow \C / \C'$ exactly as there.
As we explained already, $E_i$ and $F_i$ induce endofunctors of $\C / \C'$,
hence giving a categorical $\mathfrak{sl}_2$-action on $\C / \C'$,
which is finite. Moreover all $E_i^{(n)} L(b)$ and $F_i^{(n)} L(b)$
are finitely generated and cogenerated, and their socles and heads
have constituents only of the form $L(c)$ for $c \in \B''$.
So we can use Lemma~\ref{qlem} to transport the results from
$\C / \C'$ established in the previous paragraph to $\C$, and the
general result
follows.
Perhaps the only statement that requires additional comment is the
second assertion of (4). For this, the other properties imply that
$E_i^{(m-n)}$ annihilates all composition factors $L(c)$ of $E_i^{(n)} L(b)$
different from $L(\tilde e_i^n b)$, hence we get that
$\eps_i(c) < m-n$.
\end{proof}

\begin{remark}
By a classical result, the algebra $\Sym_n / \langle
\h_{m-n+1},\h_{m-n+2},\dots,\h_m\rangle$ in Theorem~\ref{ac}(5) is isomorphic to 
the cohomology of the Grassmannian $\operatorname{Gr}_{n,m}$. This is
explained in \cite[$\S$3.3.2]{CR}.
\end{remark}

\begin{remark}
It is interesting to consider what happens in Theorem~\ref{ac} if the
nilpotency assumption is dropped. In general, one still obtains a
crystal structure on $\B$, but for a certain unfurling
$\widetilde{\mathfrak{g}}$ of $\mathfrak{g}$ 
in the sense of \cite{erratum}. This is a consequence of the isomorphism
theorem established in \cite[$\S$3]{erratum}.
\end{remark}

The following well-known corollary is a first application; again this argument appeared
already in a special case in \cite{G}.

\begin{corollary}\label{lastone}
For $\kappa \in P^+$,
the Grothedieck group $K_0(\dotLm(\kappa))$ is the $\Z$-span of the vectors
$f_{i_n}^{(r_n)} \cdots f_{i_1}^{(r_1)} [1_\kappa]$
for $i_1,\dots,i_n \in I$ and $r_1,\dots,r_n \geq 1$.
\end{corollary}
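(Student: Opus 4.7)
\noindent The plan is to identify $W := \Z\langle f_{i_n}^{(r_n)}\cdots f_{i_1}^{(r_1)}[1_\kappa]\rangle \subseteq K_0(\dotLm(\kappa))$ with the cyclic $\dot U(\g)_\Z$-submodule generated by $[1_\kappa]$ under the action from Lemma~\ref{gg}, then to exhaust $K_0$ using the Morita equivalence with the cyclotomic quiver Hecke category. The crucial initial observation is that $E_i\, 1_\kappa = 0$ in $\Lm(\kappa)$ for every $i\in I$: this is the defining relation (\ref{poll1}) in the invariant ideal cutting $\L(\kappa)$ out of $\RR(\kappa)$. Hence $e_i\cdot[1_\kappa] = 0$, and since $K_0(\dotLm(\kappa))$ is $\Z$-free also $e_i^{(r)}\cdot[1_\kappa]=0$ for all $r\geq 1$; the triangular decomposition of the Kostant form then collapses $\dot U(\g)_\Z\cdot[1_\kappa]$ to $\dot U^-(\g)_\Z\cdot[1_\kappa] = W$ (using that $\dot U^-(\g)_\Z$ is generated as a ring by the divided powers $f_i^{(r)}$). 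The corollary is therefore equivalent to the inclusion $K_0(\dotLm(\kappa)) \subseteq \dot U(\g)_\Z\cdot[1_\kappa]$.

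\noindent Next, by Theorem~\ref{rrr} and the resulting Morita equivalence $\dotLm(\kappa)\simeq\dot\Hm'(\kappa)$, every indecomposable $P(b)\in\ob\dotLm(\kappa)$ occurs as a direct summand of $F_{i_1}\cdots F_{i_n}\cdot 1_\kappa$ for some sequence $(i_1,\ldots,i_n)\in I^n$. Combining this with the adjunction
$$
\Hom\bigl(F_{i_1}\cdots F_{i_n}\,1_\kappa,\,L(b)\bigr)\;\cong\;\Hom\bigl(1_\kappa,\,E_{i_n}\cdots E_{i_1}\,L(b)\bigr),
$$
one shows that if $E_i L(b)=0$ for all $i$ then $b=b_0$, the unique vertex with $L(b_0)\cong P(b_0)\cong 1_\kappa$: otherwise the right hand side vanishes for every non-empty word, so $L(b)$ cannot appear in the head of any non-trivial $F$-word applied to $1_\kappa$, contradicting the fact that $L(b)=\head P(b)$ appears in the head of such an $F$-word having $P(b)$ as a summand. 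Thus $b_0$ is the unique highest weight element of the normal crystal $\B$ from Theorem~\ref{ac}.

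\noindent Finally, I would iterate Theorem~\ref{ac}(3)--(4) down the crystal starting from $b_0$: for each $b$, a maximal-$\tilde f$ path $b_0=c_0\to c_1\to\cdots\to c_N=b$ (with each step applying $\tilde f_{i_j}^{\phi_{i_j}(c_{j-1})}$) gives $F_{i_j}^{(\phi_{i_j}(c_{j-1}))}L(c_{j-1})\cong L(c_j)$ irreducible at every stage, yielding an explicit integer formula for $[L(b)]$ in $K_0(\fC)$ as a divided-power monomial applied to $[1_\kappa]$. A parallel induction on dominance, combined with the Morita equivalence above, then converts these formulas into $\Z$-expressions for each $[P(b)] \in K_0(\dotLm(\kappa))$ as an integer combination of monomials $f^{(r_N)}_{i_N}\cdots f^{(r_1)}_{i_1}[1_\kappa]\in W$.

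\noindent The main technical obstacle is this last step: the integral passage from simples to projectives (the Grothendieck groups $K_0(\fC)$ and $K_0(\pC) = K_0(\dotLm(\kappa))$ are related via the Cartan matrix only invertibly over $\Q$ a priori) together with the certification that every $b\in\B$ is reachable by a maximal $\tilde f$-path. Both are handled using the (graded) cellular and highest-weight-category structure of the cyclotomic quiver Hecke algebras $H'_n(\kappa)$---standard in the literature but not developed in this survey.
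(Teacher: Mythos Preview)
Your detour through the irreducibles $[L(b)]$ creates the very obstacle you flag at the end: the passage from $K_0(\fC)$ back to $K_0(\pC)=K_0(\dotLm(\kappa))$ via the Cartan map is only $\Q$-invertible a priori, and the cellular/highest-weight machinery you invoke is not set up in this paper. Likewise, the existence of a maximal-$\tilde f$ path from $b_0$ to an arbitrary $b$ is not a consequence of the crystal axioms alone, so step~4 is also incomplete as written. Neither gap is fatal in the broader literature, but within the framework of this survey your argument does not close.

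The paper's proof avoids both problems by never leaving the projective side. The key observation (a direct adjunction consequence of Theorem~\ref{ac}(3)--(4)) is that for $b\in\B_\lambda$ with $m:=\eps_i(b)>0$ one has
\[
F_i^{(m)} P(\tilde e_i^m b)\;\cong\; P(b)\ \oplus\ \bigoplus P(b'),
\qquad\text{all }b'\text{ satisfying }\eps_i(b')>m.
\]
Indeed, the multiplicity of $P(c)$ in $F_i^{(m)}P(\tilde e_i^m b)$ equals $[E_i^{(m)}L(c):L(\tilde e_i^m b)]$, which is $1$ for $c=b$ and vanishes unless $\eps_i(c)\geq m$, with equality forcing $c=b$. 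One then runs a double downward induction---first on the weight $\lambda$, then on $\eps_i(b)$---to conclude $[P(b)]\in W$ for every $b$. This is the dual-perfect-basis mechanism alluded to just after the corollary, and it stays entirely in $K_0(\dotLm(\kappa))$ over $\Z$. Your steps 1--3 are correct but unnecessary once you have this decomposition; in particular you do not need to identify the unique highest weight element of the crystal or to reason about $[L(b)]$ at all.
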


\begin{proof}
We apply Theorem~\ref{ac} with $\C := \rMod \Lm(\kappa)$; cf. Construction~\ref{right}.
Let $M$ be the span of all the vectors
$f_{i_n}^{(r_n)} \cdots f_{i_1}^{(r_1)} [1_\kappa]$.
Proceeding by downward induction on weight, consider some $\lambda < \kappa$.
We need to show for each $b \in \B_\lambda$
that $[P(b)] \in M$.
Pick $i$ so that $m := \eps_i(b) \neq 0$.
Using 
Theorem~\ref{ac} and an argument with adjunctions, one shows that
$F_i^{(m)} P(\tilde e_i^m(b))
\cong P(b) \oplus (*)$
where $(*)$ is a direct sum of projectives of the form $P(b')$ for $b'
\in \B_\lambda$ with $\eps_i(b') > \eps_i(b)$.
By downward induction on $\eps_i(b)$, we may assume that all of these 
$[P(b')]$ lie in $M$. Hence, we get that $[P(b)] \in M$ too.
\end{proof}

The proof of Corollary~\ref{lastone} implicitly uses the defining property of
a {\em dual perfect basis} from \cite[Definition 4.2]{KKKS}.
In fact, Theorem~\ref{ac} easily implies for any nilpotent locally
Schurian categorical action that
$\{
{[{P(b)}]} \:|\: b \in \B
\}$ is a
{dual perfect basis} for $\mathbb{C}\otimes_{\Z} K_0(\pC)$.
In particular, we recover the following well-known result on appealing also to 
\cite[Theorem 6.1]{KKKS};
this was originally proved in \cite{LV} by a different method.

\begin{corollary}\label{thec}
For $\kappa \in P^+$,
the crystal associated to the 
minimal categorification $\Lm(\kappa)$
is a copy of Kashiwara's highest weight crystal $\B(\kappa)$.
\end{corollary}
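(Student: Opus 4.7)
The plan is to identify the crystal structure on $\B$ (the set indexing isomorphism classes of irreducibles in $\Lm(\kappa)$) produced by Theorem~\ref{ac} with Kashiwara's $\B(\kappa)$ by routing through the theory of dual perfect bases. Throughout, let $\C := \rMod \Lm(\kappa)$, which carries a nilpotent categorical action by Construction~\ref{right} and Lemma~\ref{nilp}, so Theorem~\ref{ac} endows $\B$ with a normal crystal structure.

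First I would verify that the distinguished basis $\{[P(b)]\:|\:b \in \B\}$ of $\mathbb{C}\otimes_{\Z} K_0(\pC)$ is a \emph{dual perfect basis} in the sense of \cite[Definition 4.2]{KKKS}. The axioms amount to the following statements, each of which is essentially recorded in, or an immediate consequence of, Theorem~\ref{ac}: for each $b \in \B_\lambda$ and $i \in I$, the class $f_i [P(b)]$ expands as a non-negative integral combination of $[P(c)]$'s with a unique ``top'' term $[P(\tilde f_i b)]$ (when $\tilde f_i b \neq 0$) appearing with the correct multiplicity governed by $\varphi_i$, and the analogous statement holds for $e_i$. Concretely, adjointness gives $\dim\Hom_{\C}(P(b), F_i L(c)) = \dim\Hom_{\C}(E_i P(b), L(c))$, and combined with parts~(1)--(4) of Theorem~\ref{ac} this pins down the leading term of $F_i P(b)$ (as in the decomposition $F_i^{(m)} P(\tilde e_i^m b) \cong P(b) \oplus (*)$ used in the proof of Corollary~\ref{lastone}) and the strict inequality on $\eps_i$ for the remaining summands.

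Next, by Theorem~\ref{t} there is a $\g$-module isomorphism
\begin{equation*}
L(\kappa) \stackrel{\sim}{\longrightarrow} \mathbb{C}\otimes_{\Z} K_0(\dotLm(\kappa)), \qquad \bar 1_\kappa \mapsto [1_\kappa],
\end{equation*}
under which $\{[P(b)]\:|\:b \in \B\}$ becomes a dual perfect basis of the integrable highest weight module $L(\kappa)$ (with distinguished highest weight vector $[1_\kappa] = [P(b_\kappa)]$ corresponding to the unique $b_\kappa \in \B_\kappa$). By \cite[Theorem 6.1]{KKKS}, any dual perfect basis of $L(\kappa)$ has associated crystal isomorphic to Kashiwara's highest weight crystal $\B(\kappa)$; the isomorphism is given by matching highest weight elements and then using the crystal operators to propagate. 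The crystal structure on our basis is precisely the one on $\B$ produced by Theorem~\ref{ac}, so this yields $\B \cong \B(\kappa)$ as normal crystals.

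The main obstacle is really bookkeeping rather than a genuine difficulty: one must check carefully that the crystal operators $\tilde e_i, \tilde f_i$ defined on $\B$ via sockets and heads in Theorem~\ref{ac}(1)--(2) coincide with the combinatorial operators extracted from the dual perfect basis axioms in \cite{KKKS}. This hinges on the sharper statements in Theorem~\ref{ac}(3)--(4): the fact that $L(\tilde e_i^n b)$ appears with multiplicity $\binom{\eps_i(b)}{n}$ in $E_i^{(n)} L(b)$, while all other constituents have strictly smaller $\eps_i$, is exactly the non-degeneracy condition needed to identify the leading term of $E_i^{(n)}[P(b)]$ in the dual perfect basis formalism. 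Once this identification is in place, \cite[Theorem 6.1]{KKKS} applies verbatim and gives the claimed isomorphism of crystals.
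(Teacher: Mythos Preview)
Your proposal is correct and follows essentially the same route as the paper: the paper's argument (given in the paragraph immediately preceding the corollary) is exactly to observe that Theorem~\ref{ac} makes $\{[P(b)]\:|\:b\in\B\}$ a dual perfect basis of $\mathbb{C}\otimes_\Z K_0(\pC)\cong L(\kappa)$, and then to invoke \cite[Theorem 6.1]{KKKS}. Your write-up simply fills in more of the bookkeeping (the explicit appeal to Theorem~\ref{t} and the adjunction checks) that the paper leaves implicit.
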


\begin{remark}
If $\C$ is Artinian, one can also show that $\{[L(b)]\:|\:b \in
\B\}$
is a {\em perfect basis} for the
complexified Grothendieck group of the Schurian category $\fC$
in the (older) sense of \cite[Definition 5.49]{BeK}.
This was observed originally by Shan \cite[Proposition
6.2]{S}. Combined with 
\cite[Theorem 5.37]{BeK} (in place of \cite[Theorem 6.1]{KKKS}), 
Corollary~\ref{thec} may also be deduced from this; cf. 
\cite[Remark 10.3.6]{Kbook}.
However, perfect bases are not a natural thing to consider in the
locally Schurian setup: in general it is not even clear that $E_i$
and $F_i$ send irreducible objects to objects of finite length.
\end{remark}

\begin{remark}
It is natural to expect that the crystal associated to
$\Lm(\kappa'|\kappa)$
is Kashiwara's tensor product $\B'(\kappa') \otimes \B(\kappa)$ of the lowest
weight crystal $\B'(\kappa')$ with the highest weight crystal
$\B(\kappa)$.
We hope to prove this in subsequent work using some of Losev's techniques from \cite{Losev}.
\end{remark}

\begin{thebibliography}{BHLW}
\bibitem[AF]{AF}
F. Anderson and K. Fuller,
{\em Rings and Categories of Modules},
Springer, 1992.

\bibitem[A]{Ariki}
S. Ariki, On the decomposition numbers of the Hecke algebra of $G(m,1,n)$, {\em J. Math. Kyoto Univ.} {\bf 36} (1996), 789--808. 

\bibitem[BW]{BW}
H. Bao and W. Wang, 
Canonical bases in tensor products revisited, to appear in {\em
  Amer. J. Math.}.

\bibitem[BHLW]{BHLW}
A. Beliakova, K. Habiro, A. Lauda and B. Webster,
Cyclicity for categorified quantum groups,
{\em J. Algebra} {\bf 452} (2016), 118--132.

\bibitem[BeK]{BeK}
A. Berenstein and D. Kazhdan,
Geometric and unipotent crystals II: from unipotent bicrystals to
crystal bases, 
{\em Contemp. Math.} {\bf 433} (2007), 13--88.

\bibitem[BFK]{BFK}
J. Bernstein, I. Frenkel and M. Khovanov,
A categorification of the Temperley-Lieb algebra and Schur quotients of
$U(\mathfrak{sl}_2)$ via projective and Zuckerman functors,
{\em Selecta Math.} {\bf 5} (1999), 199--241.

\bibitem[BZ]{BZ}
J. Bernstein and A. Zelevinsky,
Induced representations of reductive $p$-adic groups, I, 
{\em Ann. Sci. Ecole Norm. Sup.} {\bf 10} (1977), 441--472.

\bibitem[B1]{B1}
J. Brundan,
Kazhdan-Lusztig polynomials and character formulae for the Lie superalgebra
$\mathfrak{gl}(m|n)$, {\em J. Amer. Math. Soc.} {\bf 16} (2003), 185-231.

\bibitem[B2]{Bsurvey}
J. Brundan,
 Quiver Hecke algebras and categorification, 
in: ``Advances in Representation Theory of Algebras,'' eds: D. Benson et
al, {\em EMS Congress Reports}, 2013, pp.103--133.

\bibitem[B3]{B}
J. Brundan,
On the definition of Kac-Moody 2-category, {\em Math. Ann.}
{\bf 364} (2016), 353--372.
 
\bibitem[BCNR]{BCNR}
J. Brundan, J. Comes, D. Nash and A. Reynolds,
A basis theorem for the oriented Brauer category and its cyclotomic
quotients,
to appear in {\em Quantum Top.}.


\bibitem[BE1]{BE}
J. Brundan and A. Ellis,
Monoidal supercategories;
\arxiv{1603.05928}.

\bibitem[BE2]{BE2}
J. Brundan and A. Ellis,
Super Kac-Moody 2-categories; in preparation.

\bibitem[BK]{BK}
J. Brundan and A. Kleshchev,
Translation functors for general linear and symmetric groups,
{\em Proc. London Math. Soc.}
{\bf 80} (2000), 75--106.



\bibitem[BLW]{BLW}
J. Brundan, I. Losev and B. Webster,
Tensor product categorifications and the super Kazhdan-Lusztig
conjecture, to appear in {\em Int. Math. Res. Notices}.


\bibitem[CL]{CL}
S. Cautis and A. Lauda,
Implicit structure in 2-representations of quantum groups,
{\em Selecta Math.} {\bf 21} (2015), 201--244.


\bibitem[CR]{CR} 
J. Chuang and R. Rouquier,
Derived equivalences for symmetric groups and $\mathfrak{sl}_2$-categorification, {\em Ann. of Math.} {\bf 167} (2008), 245--298.







\bibitem[DVV]{DVV}
    O. Dudas, M. Varagnolo and E. Vasserot,
    Categorical actions on unipotent representations of finite
    classical groups; \arxiv{1603.00742}.

\bibitem[EHS]{AHS}
I. Entova-Aizenbud, V. Hinich and V. Serganova,
Deligne categories and the limit of categories
$\operatorname{Rep}(GL(m|n))$;
\arxiv{1511.07699}.

\bibitem[EGNO]{EGNO}
P. Etingof,
S. Gelaki, 
D. Nikshych
and V. Ostrik,
{\em Tensor Categories},
Amer. Math. Soc., 2015.

\bibitem[FK]{FK}
B. Ford and A. Kleshchev, 
A proof of the Mullineux conjecture,
{\em Math. Z.} {\bf 226} (1997), 267--308.

\bibitem[F]{F}
P. Freyd, 
Abelian categories, 
{\em Reprints in Theory and Applications of Categories} {\bf 3} (2003), 23--164.

\bibitem[G]{G}
I. Grojnowski, Affine $\mathfrak{sl}_p$ controls the representation theory of the symmetric group and related Hecke algebras; \arxiv{math.RT/9907129}. 

\bibitem[GV]{GV}
I. Grojnowski and M. Vazirani, Strong multiplicity one theorem for affine
Hecke algebras of type $A$, 
{\em Transform. Groups} {\bf 6} (2001), 143--155.

\bibitem[Kac]{Kac}
V. Kac,
{\em Infinite Dimensional Lie Algebras},
Cambridge University Press, 1995.

\bibitem[KKKS]{KKKS}
B. Kahn, S.-J. Kang, M. Kashiwara and U. Suh, 
Dual perfect bases and dual perfect graphs,
{\em Mosc. Math. J.} {\bf 15} (2015), 319--335.

\bibitem[Kam]{Kam}
J. Kamnitzer,
Categorification of Lie algebras (d'apr\'es Rouquier, Khovanov-Lauda,
\dots), {\em Ast\'erisque} {\bf 361} (2014), 397--419.

\bibitem[KK]{KK}
S.-J. Kang and M. Kashiwara,
Categorification of highest weight modules via
Khovanov-Lauda-Rouquier algebras, 
{\em Invent. Math.}
{\bf 190}
(2012), 699--742.

\bibitem[K1]{Kp}
M. Kashiwara, Crystalizing the $q$-analogue of universal enveloping
algebras, {\em Comm. Math. Phys.} {\bf 133} (1990), 249--260.

\bibitem[K2]{Kas}
M. Kashiwara,
On crystal bases, in: ``Representations of Groups (Banff 1994),''
{\em CMS Conf. Proc.} {\bf 16} (1995), 155--197.


\bibitem[K]{K}
M. Khovanov, 
A functor-valued invariant of tangles,
{\em Alg. Geom. Topology} {\bf 2} (2002), 665--741.

\bibitem[KL1]{KL1}
M. Khovanov and A. Lauda,
A diagrammatic approach to categorification of quantum
groups I, {\em Represent. Theory} {\bf 13} (2009), 309--347. 

\bibitem[KL2]{KL2}
M. Khovanov and A. Lauda,
A diagrammatic approach to categorification of quantum
groups II, {\em Trans. Amer. Math. Soc.} {\bf 363} (2011), 2685--2700. 

\bibitem[KL3]{KL3}
M. Khovanov and A. Lauda,
 A categorification of quantum $\mathfrak{sl}(n)$,
{\em Quantum Top.} {\bf 1} (2010), 1--92.

\bibitem[K]{Kbook}
A. Kleshchev, {\em
Linear and Projective Representations of Symmetric Groups}, Cambridge University Press, Cambridge, 2005. 

\bibitem[LLT]{LLT}
A. Lascoux, B. Leclerc, and J.-Y. Thibon, Hecke algebras at roots of unity and crystal bases of quantum affine algebras, {\em Commun. Math. Phys.} {\bf 181} (1996), 205-263.

\bibitem[L1]{L}
A. Lauda, A categorification of quantum $\mathfrak{sl}(2)$, {\em Advances Math.}
{\bf 225} (2010), 3327--3424. 

\bibitem[L2]{Lauda}
A. Lauda,
An introduction to diagrammatic algebra and categorified quantum
$\mathfrak{sl}(2)$, {\em Bull. Inst. Math. Acad. Sin.} {\bf 7} (2012),
165--270.

\bibitem[LV]{LV}
A. Lauda and M. Vazirani,
Crystals from categorified quantum groups,
{\em Advances Math.} {\bf 228} (2011), 803--861.

\bibitem[L]{Losev}
I. Losev,
Highest weight $\mathfrak{sl}_2$-categorifications I: crystals, 
{\em Math. Z.} {\bf 274} (2013), 1231--1247.

\bibitem[Lu1]{Lp}
G. Lusztig, 
Canonical bases arising from quantized enveloping
algebras, {\em J. Amer. Math. Soc.} {\bf 3} (1990), 447--498.

\bibitem[Lu2]{Lubook}
G. Lusztig,
{\em Introduction to Quantum Groups}, Birkh\"auser,
1993.

\bibitem[M]{M}
B. Mitchell, Rings with several objects,
{\em Advances Math.} {\bf 8} (1972), 1--161.

\bibitem[R1]{Rou}
R. Rouquier, 
2-Kac-Moody algebras;
\arxiv{0812.5023}.

\bibitem[R2]{R2}
R. Rouquier,
Quiver Hecke algebras and $2$-Lie algebras,
{\em Algebra Colloq.} {\bf 19} (2012), 359--410.

\bibitem[S]{S}
P. Shan, 
Crystals of Fock spaces and cyclotomic rational double affine Hecke
algebras, 
{\em Ann. Sci. \'Éc. Norm. Sup.} {\bf 44} (2010), 147--182.



\bibitem[W1]{Wcan}
B. Webster,
Canonical bases and higher representation theory, 
{\em Compositio Math.} {\bf 151} (2015), 121--166.

\bibitem[W2]{Web}
B. Webster,
Knot invariants and higher representation theory, to appear in {\em
  Mem. Amer. Math. Soc.}.

\bibitem[W3]{erratum}
B. Webster, 
Unfurling Khovanov-Lauda-Rouquier algebras;
\arxiv{1603.06311}.

\end{thebibliography}
\end{document}